\definecolor{lightblue}{rgb}{0.22,0.45,0.70}
\definecolor{lightgreen}{rgb}{0.22,0.50,0.25}
\definecolor{darkred}{rgb}{0.82,0.15,0.20}
\definecolor{darkblue}{rgb}{0.82,0.15,0.12}
\renewenvironment{proof}{\noindent{\it Proof.}}{\hfill$\square$}
\numberwithin{equation}{section}
\numberwithin{figure}{section}
\numberwithin{table}{section}
\numberwithin{lemma}{section}
\numberwithin{corollary}{section}
\numberwithin{theorem}{section}
\numberwithin{remark}{section}
\newcommand{\vertiii}[1]{{\left\vert\kern-0.25ex\left\vert\kern-0.25ex\left\vert #1 
		\right\vert\kern-0.25ex\right\vert\kern-0.25ex\right\vert}}
\begin{document}
	\titlerunning{Stabilized VEM for coupled Stokes-Temperature equation}   
	\title{An equal-order virtual element framework for the coupled Stokes-Temperature  equation with nonlinear viscosity}    
	\authorrunning{Mishra, Natarajan}
	\author{Sudheer Mishra \and Natarajan E }
	\institute{
		Natarajan E \at
		Department of Mathematics, Indian Institute of Space Science and Technology, Thiruvananthapuram, 695547, India \\
		\email{thanndavam@iist.ac.in}.\\
		Sudheer Mishra \at Department of Mathematics, Indian Institute of Space Science and Technology, Thiruvananthapuram, 695547, India\\
		\email{sudheermishra.20@res.iist.ac.in}.
		\and
	}
	\date{}
	\maketitle
	
\begin{abstract}
	In this work, we present and analyze a novel stabilized virtual element formulation for the coupled Stokes-Temperature equation on polygonal meshes, employing equal-order element pairs where viscosity depends on temperature. The main objective of the proposed virtual elements is to develop a stabilized virtual element problem that avoids higher-order derivative terms and bilinear forms involving velocity, pressure and temperature, thereby avoiding the coupling between virtual element pairs. Moreover, it also reduces the violation of divergence-free constraints and offers reasonable control over the gradient of temperature. We derive the stability of the continuous solution using the Banach fixed-point theorem under sufficiently small data. The stabilized coupled virtual element problem is formulated using the local projection-based stabilization methods. We demonstrate the existence and uniqueness of the stabilized  discrete solution using the Brouwer fixed-point theorem and the contraction theorem under the assumption of sufficient small data by showing the well-posedness of the stabilized decoupled virtual element problems. Furthermore, we derive the error estimates with optimal convergence rates in the energy norms. We present several numerical examples to confirm the theoretical findings. Additionally, the numerical behavior of the proposed stabilized method is shown to be robust with respect to linear and non-linear thermal conductivity.
	
\end{abstract}

	\keywords{Stokes-Temperature equation  \and Stabilized virtual elements\and Inf-sup condition \and Fixed-point theorem \and General polygons.}
	\subclass{ 65N12, 65N30, 76D07}

	\section{Introduction}
The coupling of a viscous incompressible flow with a temperature transport equation where the dynamic viscosity depends on temperature arises in many physical applications, such as the thermal structure of subduction zones, natural and thermal convection, climate predictions, heat transfer in nanoparticle fluids, aluminum production, nuclear power plants, chemical distillation process, oceanic flows, motion of bio-membranes and so on. Due to its significance, the coupled thermo-fluid dynamics problem has garnered considerable attention from physicists, geologists, mathematicians, and engineers over the past decades. Consequently, several numerical techniques have been developed to compute the distribution of the temperature field \cite{mfem44,mfem45,mfem46,mfem49,mfem48}.

The coupled Stokes-Temperature equation can be viewed as a subproblem derived from the generalized Boussinesq problem, which models the motion of incompressible fluid flow governed by the Stokes equations coupled with the transport heat equation, where the convective term is driven by the fluid velocity. Notable work related to the Boussinesq equation includes \cite{mfem51,mfem52,mfem53,mvem3},  among others. The mixed formulation of this coupling also appears in the sedimentation-consolidation process of particles, which has been extensively studied in \cite{mfem39,mfem56,mfem57}. In \cite{mfem55}, the authors investigated the finite element approximation of the coupled Stokes-Heat equations under nonlinear slip boundary conditions, employing stable finite element pairs. More recently, Araya et al. analyzed the same coupling for Stokes-Temperature equations using the equal-order finite element approximations in \cite{mfem43}.

Most of the works mentioned above, notably \cite{mfem39,mfem56,mfem57,mfem55}, rely on stable primal/mixed finite element approximations, where the finite element pairs are suitably chosen to satisfy the discrete inf-sup condition. Violation of this condition leads to the appearance of spurious oscillations or moving fronts in the solution domain, a common issue in unified (equal-order) finite/virtual element frameworks. Additionally, convection-dominated regimes also tend to produce such oscillations. To address these instabilities, several stabilization methods have been analyzed, including the Galerkin-least-square (GLS) \cite{mfem28}, streamline upwind/Petrov-Galerkin (SUPG) \cite{mfem29,mfem30}, continuous interior penalty (CIP) \cite{mfem32}, and local projection stabilization (LPS) \cite{mfem33,mfem31}.

The Virtual Element Method (VEM) was introduced in \cite{vem1,vem25}, representing a modern advancement in the discretization of partial differential equations. It extends the traditional Finite Element Method (FEM) by incorporating more general polygonal and polyhedral meshes, including non-convex meshes with hanging nodes and high distortion. Due to its flexibility, VEM has received much attention from engineering communities. Here we cite some novel works in continuum mechanics \cite{mvem17,mvem18,mvem19,mvem20}.
The VEM literature is extensive and comprehensive, addressing a wide range of problems, including elliptic, parabolic, and hyperbolic equations \cite{vem40,vem8,vem32,vem33}, stabilized VEM for convection-diffusion problems \cite{vem4,vem04,vem08}, and Cahn-Hillard \cite{vem11}. Finally, we address some remarkable contributions in fluid dynamics for the Stokes or Navier-Stokes problem \cite{mvem9,mvem11,mvem12,mvem3,mvem2,mvem16}.

In this work, we investigate a stabilized conforming Virtual Element Method (VEM) for the coupled Stokes-Temperature problem with nonlinear viscosity, utilizing equal-order virtual element pairs. The stabilized VEM is derived using local projection stabilization techniques as studied in \cite{mfem31,vem28m,mvem21}. In \cite{mvem3}, the authors proposed a VEM formulation for the Navier-Stokes equation coupled with the heat equation, employing stable virtual element pairs where the discrete velocity satisfies the divergence-free constraint. Recently, a least-squares based stabilized FEM has been discussed in \cite{mfem43}, focusing on diffusion-dominated temperature transport regimes. Their study stabilizes the entire system except for the continuity equation. However, this method incorporates higher-order derivative terms and multiple forms involving more than one finite element, which can increase the coupling between finite element pairs. Furthermore, we emphasize that the cross-terms $\alpha_1 \left( \nabla p_h,  \nabla \cdot \tau_h \right)_K + \alpha_1 \left( -  \nabla \cdot \sigma_h, \frac{1}{8\mu(\phi)}\nabla q_h \right)_K$ can become more problematic in the presence of high derivatives of pressure and stress, as pointed out by R. Codina in \cite{mfem58}. In contrast, we propose a more concise stabilization technique utilizing suitable projectors that provide separate stabilization terms without relying on higher-order derivative terms.
Due to the unified framework, the classical coupled virtual element problem may suffer from violations of the discrete inf-sup condition, violations of the divergence-free constraints, and the presence of convection-dominated transport regimes. These issues are addressed in the stabilized coupled virtual element problem. The theoretical analysis of the stabilized virtual element problem is primarily focused on diffusion-dominated transport regimes, similar to \cite{mvem3,mfem39,mfem43}. In addition, the numerical behavior of the proposed method demonstrates its effectiveness in addressing convection-dominated transport regimes. 

We establish the stability of the continuous problem under sufficiently small data by applying the fixed-point and contraction theorems. For the discrete stabilized problem, we first establish the well-posedness of the stabilized decoupled problems followed by deriving the well-posedness of the coupled problem under sufficiently small data. We show the optimal convergence rates in the energy norms. Our numerical findings validate the expected convergence rates in both diffusion-dominated and convection-dominated transport regimes. The proposed VEM effectively reduces violations of the divergence-free condition, as confirmed by simulations of realistic problems. Additionally, the numerical behavior of the stabilized VEM remains
robust to temperature-dependent thermal conductivity.

\subsection{Notations}
 We use the following notations throughout the work. Let $\Omega$ be a bounded open domain in $\mathbb{R}^2$ with Lipschitz boundary $\partial \Omega$.  We assume $\mathcal{D} \subseteq \Omega$ represents any measurable set. The Sobolev space $W^k_p(\mathcal{D})$ is equipped with the standard norm $\|\cdot\|_{k,p,\mathcal{D}}$ and its semi-norms $|\cdot|_{k,\mathcal{D}}$. For $k=0$, it reduces to the space $L^2(\mathcal{D})$, equipped with the usual $L^2$-inner product $(\cdot, \cdot)_{0,\mathcal{D}}$ and the $L^2$-norm $\|\cdot\|_{0,\mathcal{D}}$. {Often the subscript $\mathcal{D}$ will be omitted in the $L^2$-inner products when no confusion arises.}  We use bold fonts or symbols to denote vector-valued functions/tensors. The Poincar$\acute{\text{e}}$ constant depends on $\Omega$, is denoted by $C_P$. Additionally, we utilize the embedding constant $C_q>0$, which satisfies the compact embedding $W^1_2(\Omega) \hookrightarrow L^q(\Omega)$ for $q \geq 1$ such that
\begin{align*}
	\| \omega \|_{0,q,\Omega} \leq C_q \|\omega\|_{1,\Omega} \qquad \text{for all} \,\, \omega \in W^1_2(\Omega).
\end{align*}  
For $p=2$, the Sobolev space $W^k_p(\mathcal{D})$ reduces to $H^k(\mathcal{D})$.

\subsection{Structure of the paper}
The remaining structure of the present work is organized as follows. In Section \ref{sec:2}, we introduce the coupled Stokes-Temperature equations with temperature-dependent viscosity and establish the well-posedness of the primal formulation of the coupled problem. Section \ref{sec:3} delves into the fundamentals of virtual elements and derives the stabilized virtual element formulation. Section \ref{sec:4} demonstrates the well-posedness of the stabilized virtual element problem. Section \ref{sec:5} is dedicated to deriving optimal error estimates using the energy norms. The numerical results obtained with the proposed VEM are presented in Section \ref{sec:6}, and finally, Section \ref{sec:7} presents concluding remarks.

\section{The coupled Stokes-Temperature equation } \label{sec:2}
We consider the following coupled Stokes-Temperature equation on general polygonal domain $\Omega \subset \mathbb R^2$ as follows: 
\begin{empheq}[left=(S) \empheqlbrace]{align}
		\text{Find}\,(\sigma, \mathbf{u}, p, \phi)\,\, \text{such that}, \\
		\sigma &=  \mu(\phi) \varepsilon (\mathbf{u})  \quad \text{in} \quad \Omega, \label{stress-1} \\
		\quad - \nabla \cdot \sigma + \nabla p &= \alpha \phi \mathbf{f}  \quad \text{in} \quad \Omega, \label{stoke-1}\\
		\nabla \cdot \mathbf{u} &= 0 \quad  \text{in} \quad \Omega, \label{stoke-2}\\
		\mathbf{u} &= \widehat{\mathbf{f}}  \quad \text{on} \quad  \partial \Omega, \label{stoke-3}\\
		-\kappa \Delta \phi + \mathbf{u} \cdot \nabla \phi &= g  \quad \text{in} \quad \Omega, \label{heat-1}\\
		\phi &= \widehat{g} \quad  \text{on} \quad \partial \Omega, \label{heat-2} 
\end{empheq}
where $\sigma$ is the stress tensor and $\varepsilon (\mathbf{u})$ is the deformation rate tensor. The effective dynamic viscosity is defined by $\mu : \mathbb{R} \rightarrow \mathbb{R}^{+}$ and the thermal conductivity is given by $\kappa >0$. The velocity vector field is denoted by $\mathbf{u}$, the scalar pressure field by $p$ and the scalar temperature field by $\phi$. We assume the load terms $\mathbf{f} \in [L^{2}(\Omega)]^2$ and $g \in L^2(\Omega)$. {Furthermore, the positive constant $\alpha$ represents the thermal Rayleigh number.}

For the sake of simplicity, we combine Eqs. \eqref{stress-1} and \eqref{stoke-1}, and we call the obtained expression as the generalized Stokes equation, given as follows
\begin{align}
	- \nabla \cdot ( \mu(\phi) \varepsilon (\mathbf{u})) + \nabla p = \alpha \phi \mathbf{f}  \qquad \text{in}\,\,\,\, \Omega.
\end{align}
Furthermore, we assume the following:
\begin{align}
	\widehat{\mathbf{f}} = \mathbf 0, \qquad  \widehat{g}=0.	\label{homo9}
\end{align}
In Eqn. \eqref{homo9}, we consider the homogeneous Dirichlet boundary conditions for simplicity. However, non-homogeneous conditions can also be addressed. For example, in \cite{mvem3}, the authors introduce non-homogeneous Dirichlet condition for the temperature field. Mimicking this approach, the present work can be extended to the non-homogeneous case by appropriately modifying the spaces for the  velocity and temperature fields.

Additionally, let us assume that there exist positive constants $\mu_{min}$, $\mu_{max}$ and $L_\mu$, such that
\begin{align}
	\mu_{min} \leq \mu(\xi) \leq \mu_{max} \quad \text{for all}\,\, \xi \in \mathbb R,  \qquad  \quad  | \mu(\xi_1) - \mu(\xi_2)| \leq L_\mu | \xi_1 - \xi_2 |, \text{ for all }\,\, \xi_1, \xi_2 \in \mathbb R. \label{mu-reg}
\end{align}
We now introduce the following spaces for the coupled Stokes-Temperature equation: 
\begin{center}
	$\mathbf{V} = [H_0^1(\Omega)]^2$,  \qquad  $Q = L_0^2(\Omega) = \big\{ q \in L^2(\Omega): \int_{\Omega} q d \Omega = 0 \big\}$,   \qquad $\Sigma=H_0^1(\Omega)$, \qquad\qquad \qquad\qquad \qquad \qquad 
\end{center}
endowed with their natural norms. Additionally, we define the following settings:
\begin{itemize}
	\item $a_V(\cdot; \cdot,\cdot): \Sigma \times \mathbf{V} \times \mathbf{V}  \rightarrow \mathbb R$, \qquad $a_V(\phi;\mathbf{v},\mathbf{w}):= \int_{\Omega} \mu(\phi) \varepsilon (\mathbf{v}): \varepsilon (\mathbf{w})\, d\Omega$;
	\item $b(\cdot,\cdot):\mathbf{V} \times Q  \rightarrow \mathbb R$, \qquad \qquad \qquad\quad $b(\mathbf{v},q):= \int_{\Omega} (\nabla \cdot \mathbf{v}) q\, d\Omega$;  
	\item $a_T(\cdot,\cdot):\Sigma \times \Sigma \rightarrow \mathbb R$, \qquad \qquad \quad \,\, $a_T(\phi,\psi):= \int_{\Omega} \kappa \nabla \phi \cdot \nabla \psi \, d\Omega$;
	\item $c_T(\cdot; \cdot,\cdot): \mathbf{V} \times \Sigma \times \Sigma  \rightarrow \mathbb R$, \qquad \, $c_T(\mathbf{v}; \phi,\psi):= \int_{\Omega} (\mathbf{v} \cdot \nabla \phi) \psi \, d\Omega$;
	\item For given $\phi \in \Sigma$, define $F_\phi: \mathbf{V} \rightarrow \mathbb R$, \, $F_\phi(\mathbf{v}):= \int_{\Omega} (\alpha \mathbf{f}\phi)\cdot \mathbf{v} \, d\Omega$;
	\item $G: \Sigma \rightarrow \mathbb R$, \qquad \qquad \qquad \qquad \qquad \quad  $G(\psi):= \int_{\Omega} g \psi \, d\Omega$.
\end{itemize}
We further define the following forms:
\begin{align*}
	A_V[\phi; (\mathbf{v},q), (\mathbf{w}, r)]&:= a_V(\phi;\mathbf{v},\mathbf{w}) - b(\mathbf{w},q) + b(\mathbf{v},r) \qquad \, \text{for all} \,\, \phi \in \Sigma \,\, \text{and} \,\, (\mathbf{v},q), (\mathbf{w}, r) \in \mathbf{V} \times Q, \\
	A_T ( \mathbf{v}; \phi, \psi )&:= a_T(\phi,\psi) +  c^S_T(\mathbf{v}; \phi,\psi) \qquad  \qquad  \quad \text{ for all} \,\, \mathbf{v} \in \mathbf{V} \,\, \text{and}\,\, \phi, \psi \in \Sigma,
\end{align*}
where $c^S_T(\mathbf{v}; \phi,\psi)= \frac{1}{2} \big[c_T(\mathbf{v}; \phi,\psi) - c_T(\mathbf{v}; \psi, \phi)\big]$ represents the skew-symmetric part of $c_T$. 

Therefore, the weak formulation for the coupled Stokes-Temperature equation $(S)$ is readily as follows 
\begin{align}
	\begin{cases}
		\text{Find}\,\, (\mathbf{u}, p, \phi) \in \mathbf{V} \times Q \times \Sigma, \,\, \text{such that}\\
		A_V[ \phi; (\mathbf{u}, p), (\mathbf{v}, q)] + A_T(\mathbf{u}; \phi, \psi) = F_\phi(\mathbf{v}) + G(\psi) \qquad \text{for all}\,\, (\mathbf{v}, q, \psi) \in \mathbf{V} \times Q \times \Sigma.
	\end{cases}
	\label{variation-1}
\end{align}

The boundary $\partial \Omega$ is Lipschitz and parameters $\mu>0$ and $\kappa>0$ are continuous, and $g \in L^2(\Omega)$. Therefore, the existence of a solution for problem \eqref{variation-1} follows from \cite[Theorem 2.1]{mfem0}. We emphasize that \cite[Theorem 2.1]{mfem0} shows the existence of a weak solution for continuous problem \eqref{variation-1} when load term $g=0$, and the convective term vanishes in \cite{mfem0}. By making a slight modification to the arguments in \cite{mfem0}, we can extend this result to show the existence of weak solutions for \eqref{variation-1} when $g\neq 0$.

In the following section we introduce the fixed-point formulation equivalent to the problem \eqref{variation-1} to establish the uniqueness of its solution.

\subsection{The fixed-point problem} \label{cnt_fixed} 
We now aim to achieve the fixed-point problem of the variational problem \eqref{variation-1}. To do this, we will first decouple the problem \eqref{variation-1} into the primal formulation of the momentum equation (generalized Stokes problem) and the primal formulation of the temperature (advection-diffusion) equation. Then we formulate the fixed-point problem using these decoupled problems. In this sequel, we define the operator $\mathbf{S}: \Sigma \rightarrow \mathbf{V} \times Q$ such that
\begin{align}
	\widehat{\phi} \rightarrow \mathbf{S}(\widehat{\phi}) = (S^1(\widehat{\phi}), S^2(\widehat{\phi})) := (\widehat{\mathbf{u}}, \widehat{p}), \label{opt_S}
\end{align}
where $(\widehat{\mathbf{u}}, \widehat{p}) \in \mathbf{V} \times Q$ denotes the solution of the following problem:
\begin{align}
	\begin{cases}
		\text{For given $ \widehat{\phi} \in \Sigma$, seek}\,\, (\widehat{\mathbf{u}}, \widehat{p}) \in \mathbf{V} \times Q, \,\, \text{such that}\\
		A_V[ \widehat{\phi}; (\widehat{\mathbf{u}}, \widehat{p}), (\mathbf{v}, q)]  = F_{\widehat{\phi}}(\mathbf{v})  \qquad \text{for all}\,\, (\mathbf{v}, q) \in \mathbf{V} \times Q.
	\end{cases}
	\label{dvariation-1}
\end{align}
Additionally, we introduce $\mathbf{V}_{div}:= \{ \mathbf{v} \in \mathbf{V} \,\, \text{such that} \,\, \nabla \cdot \mathbf{v}=0\}$. Therefore, we define the operator $\mathbb{M} : \mathbf{V}_{div} \rightarrow \Sigma$ corresponding to the temperature equation by, 
\begin{align}
	\widehat{\mathbf{u}} \rightarrow \mathbb{M}(\widehat{\mathbf{u}}) = \widehat{{\phi}}, \label{opt_T}
\end{align}
where $\widehat{\phi} \in \Sigma$ is the solution of the following problem:
\begin{align}
	\begin{cases}
		\text{For given} \,\, \widehat{\mathbf{u}} \in \mathbf{V}_{div}, \,\, \text{seek} \,\, \widehat{\phi} \in \Sigma, \,\, \text{such that}\\
		A_T(\widehat{\mathbf{u}}; \widehat{\phi}, \psi) =  G(\psi) \qquad \text{for all}\,\,  \psi \in  \Sigma.
	\end{cases}
	\label{dvariation-2}
\end{align}
Finally, we define $\mathbb{T}: \Sigma \rightarrow \Sigma$ by $\mathbb{T}(\widehat{\phi}):= \mathbb{M}(S^1(\widehat{\phi}))$, for all $\widehat{\phi} \in \Sigma$. Then, the fixed-point problem can be defined as follows:
\begin{align}
	\begin{cases}
		\text{Find} \,\, {\phi} \in \Sigma, \,\, \text{such that}\\
		\mathbb{T}({\phi}) = \phi.
	\end{cases}
	\label{fixed}
\end{align}
Notably, the fixed-point problem \eqref{fixed} is equivalent to the problem \eqref{variation-1}. Therefore, we establish the stability of the problem \eqref{fixed} using the Banach fixed-point theorem following \cite{mfem39}.

\subsection{Well-posedness of the decoupled problems} 
 We now establish the well-posedness of the decoupled problems \eqref{dvariation-1} and \eqref{dvariation-2}. To do this, we define the norms over the spaces  $\mathbf{V} \times Q$ and $\Sigma$ as follows:
\begin{align}
	\vertiii{(\mathbf{v}, q)}^2 &:= \mu_{min} \|\nabla \mathbf{v}\|^2_{0,\Omega} + \|q\|^2_{0,\Omega} \qquad \,\, \text{for all} \,\, (\mathbf{v}, q) \in \mathbf{V} \times Q, \\
	\vertiii{\phi}_{\Sigma}&:= \sqrt{\kappa} \|\nabla \phi\|_{0,\Omega} \qquad \,\, \text{for all} \,\, \phi \in \Sigma.
\end{align} 

\begin{remark} \label{cinfsup}
	For given $\phi \in \Sigma$, combining the continuity and coercivity of $a_V$ with the continuity and inf-sup condition of the form $b$, it is evident that for any $(\mathbf{u}, p) \in \mathbf{V} \times Q$
	\begin{align}
		(\mathbf{v}, q) \neq (\mathbf{0},0), \quad \sup \limits_{(\mathbf{v}, q) \in \mathbf{V} \times Q} \dfrac{A_{V} [ \phi; (\mathbf{u}, p), (\mathbf{v}, q)]}{\vertiii{(\mathbf{v}, q)}} \gtrsim {\beta}_0 \vertiii{(\mathbf{u}, p)}, \label{cwellpos-0}
	\end{align}
	{where the constant $\beta_0>0$ depends on $\mu$ and is the inf-sup constant associated with the form $A_{V}[\cdot;\cdot,\cdot]$.}
\end{remark}

\begin{lemma} \label{d1_unique}
	For given $\widehat{{\phi}} \in \Sigma$, the decoupled problem \eqref{dvariation-1} has a unique solution $(\widehat{\mathbf{u}}, \widehat{p}) \in \mathbf{V} \times Q$ such that it holds 
	\begin{align}
		\vertiii{\mathbf{S}(\widehat{{\phi}})} := \vertiii{(\widehat{\mathbf{u}}, \widehat{{p}})} \lesssim \dfrac{ \alpha C_q^2 (1+C_P)^2}{\beta_0 \sqrt{ \kappa \mu_{min}}} \|\mathbf{f}\|_{0, \Omega} \vertiii{\widehat{\phi}}_{\Sigma}. \label{dc1}
	\end{align}
\end{lemma}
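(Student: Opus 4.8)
Here the plan is to regard \eqref{dvariation-1} as a generalized Stokes saddle-point problem with the frozen viscosity $\mu(\widehat{\phi})$, apply the classical Babuška–Brezzi theory for existence and uniqueness, and then extract the quantitative bound \eqref{dc1} from the global inf-sup estimate recorded in Remark \ref{cinfsup}. Note first that $\mu(\widehat{\phi})$ is meaningful and two-sided bounded for \emph{any} $\widehat{\phi}\in\Sigma$ purely from the global bounds in \eqref{mu-reg}, so no extra regularity of $\widehat{\phi}$ enters this step.

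For well-posedness, I would check the hypotheses of the theory of mixed problems: by \eqref{mu-reg} the form $a_V(\widehat{\phi};\cdot,\cdot)$ is bounded on $\mathbf{V}\times\mathbf{V}$ (with constant $\mu_{max}$) and, combining $\mu(\widehat{\phi})\ge\mu_{min}$ with the second Korn inequality on $[H_0^1(\Omega)]^2$, coercive on $\mathbf{V}$ with coercivity constant proportional to $\mu_{min}$ in the norm $\vertiii{(\cdot,\cdot)}$; the form $b$ is bounded and satisfies the inf-sup condition on $\mathbf{V}\times Q$. Hence \eqref{dvariation-1} has a unique solution $(\widehat{\mathbf{u}},\widehat{p})\in\mathbf{V}\times Q$; equivalently, one invokes the global inf-sup inequality \eqref{cwellpos-0} together with the nondegeneracy in the test slot (which follows by applying the same arguments to the adjoint problem) and the Banach–Nečas–Babuška theorem.

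For the quantitative estimate, apply \eqref{cwellpos-0} with $(\mathbf{u},p)=(\widehat{\mathbf{u}},\widehat{p})$ and then use the equation \eqref{dvariation-1}:
$$
\beta_0\,\vertiii{(\widehat{\mathbf{u}},\widehat{p})}\;\lesssim\;\sup_{(\mathbf{v},q)\neq(\mathbf{0},0)}\frac{A_V[\widehat{\phi};(\widehat{\mathbf{u}},\widehat{p}),(\mathbf{v},q)]}{\vertiii{(\mathbf{v},q)}}\;=\;\sup_{(\mathbf{v},q)\neq(\mathbf{0},0)}\frac{F_{\widehat{\phi}}(\mathbf{v})}{\vertiii{(\mathbf{v},q)}}.
$$
It then remains to bound $F_{\widehat{\phi}}(\mathbf{v})=\alpha\int_\Omega(\mathbf{f}\,\widehat{\phi})\cdot\mathbf{v}\,d\Omega$. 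By the generalized Hölder inequality with exponents $(2,4,4)$, $|F_{\widehat{\phi}}(\mathbf{v})|\le\alpha\,\|\mathbf{f}\|_{0,\Omega}\,\|\widehat{\phi}\|_{0,4,\Omega}\,\|\mathbf{v}\|_{0,4,\Omega}$; applying the embedding $W^1_2(\Omega)\hookrightarrow L^4(\Omega)$ with constant $C_q$ (here $q=4$) and the Poincaré inequality to bound each full $H^1$ norm by $(1+C_P)$ times the seminorm yields $|F_{\widehat{\phi}}(\mathbf{v})|\le\alpha\,C_q^2(1+C_P)^2\,\|\mathbf{f}\|_{0,\Omega}\,\|\nabla\widehat{\phi}\|_{0,\Omega}\,\|\nabla\mathbf{v}\|_{0,\Omega}$. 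Finally, rewriting $\|\nabla\widehat{\phi}\|_{0,\Omega}=\kappa^{-1/2}\vertiii{\widehat{\phi}}_{\Sigma}$ and $\|\nabla\mathbf{v}\|_{0,\Omega}\le\mu_{min}^{-1/2}\vertiii{(\mathbf{v},q)}$, dividing by $\vertiii{(\mathbf{v},q)}$ and taking the supremum gives \eqref{dc1}.

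The proof is essentially routine; the only points requiring care are (i) ensuring the coercivity constant of $a_V$ is genuinely proportional to $\mu_{min}$ after absorbing the (hidden) Korn constant, so that \eqref{cwellpos-0} is stated in the norm $\vertiii{(\cdot,\cdot)}$ used here, and (ii) the exact constant bookkeeping through the Hölder–embedding–Poincaré chain so as to reproduce precisely the factor $\alpha C_q^2(1+C_P)^2/(\beta_0\sqrt{\kappa\mu_{min}})$.
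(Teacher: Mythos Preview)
Your proposal is correct and follows essentially the same route as the paper: well-posedness via the continuity/coercivity of $a_V$ and the inf-sup of $b$, then the quantitative bound by invoking the global inf-sup estimate of Remark~\ref{cinfsup} and estimating $F_{\widehat{\phi}}(\mathbf{v})$ through the H\"older--Sobolev embedding--Poincar\'e chain with exponents $(2,4,4)$. The only cosmetic difference is that the paper tests $A_V$ directly with $(\widehat{\mathbf{u}},\widehat{p})$ (writing $\beta_0\vertiii{(\widehat{\mathbf{u}},\widehat{p})}^2 \le A_V[\widehat{\phi};(\widehat{\mathbf{u}},\widehat{p}),(\widehat{\mathbf{u}},\widehat{p})]$), whereas you keep the supremum form explicitly; your version is arguably the cleaner reading of Remark~\ref{cinfsup}.
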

\begin{proof}
	Since the form $a_V(\cdot; \cdot, \cdot)$ is continuous and coercive, and $b(\cdot,\cdot)$ satisfies the inf-sup condition  \cite{mfem22}, it follows that problem \eqref{dvariation-1} has a unique solution $(\widehat{\mathbf{u}}, \widehat{p}) \in \mathbf{V} \times Q$. Employing Remark \ref{cinfsup} and the Sobolev embedding theorem, we infer
	\begin{align}
		\beta_{0} \vertiii{(\widehat{\mathbf{u}}, \widehat{{p}})}^2 & \leq  A_V[\phi; (\widehat{\mathbf{u}}, \widehat{{p}}), (\widehat{\mathbf{u}}, \widehat{{p}})] = (\alpha \widehat{\phi} \mathbf{f}, \widehat{\mathbf{u}} ) \nonumber \\
		& \lesssim  \alpha C_q^2 (1+C_P)^2\|\mathbf{f}\|_{0, \Omega} \|\nabla \widehat{{\phi}}\|_{0,\Omega} \|\nabla \widehat{\mathbf{u}} \|_{0,\Omega},
	\end{align} 
	which readily implies \eqref{dc1}.
\end{proof}

\begin{lemma} \label{d2_unique}
	For any given $\widehat{\mathbf{u}} \in \mathbf{V}_{div}$, the decoupled problem \eqref{dvariation-2} has a unique solution $\widehat\phi \in \Sigma$ such that it satisfies
	\begin{align}
		\vertiii{\mathbb{M}(\widehat{\mathbf{u}})}_{\Sigma} :=\vertiii{\widehat{{\phi}}}_{\Sigma} \lesssim \dfrac{C_P}{ \sqrt{\kappa}} \|g\|_{0,\Omega}.
	\end{align}
\end{lemma}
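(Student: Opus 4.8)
The plan is to show that, for a fixed $\widehat{\mathbf{u}}\in\mathbf{V}_{div}$, the form $A_T(\widehat{\mathbf{u}};\cdot,\cdot)$ is bounded and coercive on $\Sigma\times\Sigma$, invoke the Lax--Milgram lemma to obtain existence and uniqueness of $\widehat{\phi}$, and then recover the stability bound by testing the equation against $\widehat{\phi}$ itself. Coercivity is immediate, and is precisely the reason the skew-symmetrization of $c_T$ was introduced: since $c^S_T(\widehat{\mathbf{u}};\psi,\psi)=0$ for all $\psi\in\Sigma$, one has $A_T(\widehat{\mathbf{u}};\psi,\psi)=a_T(\psi,\psi)=\kappa\|\nabla\psi\|^2_{0,\Omega}=\vertiii{\psi}_{\Sigma}^2$, so the coercivity constant equals $1$, independently of $\widehat{\mathbf{u}}$.

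For boundedness, $a_T$ is trivially continuous with constant $1$ in the $\vertiii{\cdot}_{\Sigma}$ norm. For the convective part, using $\widehat{\mathbf{u}}\in[H^1_0(\Omega)]^2\hookrightarrow[L^4(\Omega)]^2$ in two dimensions, Hölder's inequality gives $|c_T(\widehat{\mathbf{u}};\phi,\psi)|\le C_4^2\,\|\widehat{\mathbf{u}}\|_{1,\Omega}\,\|\nabla\phi\|_{0,\Omega}\,\|\nabla\psi\|_{0,\Omega}$, and the same estimate holds for $c_T(\widehat{\mathbf{u}};\psi,\phi)$; hence $A_T(\widehat{\mathbf{u}};\cdot,\cdot)$ is bounded on $\Sigma\times\Sigma$ (with a constant depending on $\|\widehat{\mathbf{u}}\|_{1,\Omega}$, which suffices here). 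The linear functional $G$ is bounded on $\Sigma$ since $|G(\psi)|\le\|g\|_{0,\Omega}\|\psi\|_{0,\Omega}\le C_P\|g\|_{0,\Omega}\|\nabla\psi\|_{0,\Omega}$. The Lax--Milgram lemma then produces a unique $\widehat{\phi}\in\Sigma$ solving \eqref{dvariation-2}.

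To close the estimate, take $\psi=\widehat{\phi}$ in \eqref{dvariation-2} and combine coercivity with the Poincaré inequality:
\[
\vertiii{\widehat{\phi}}_{\Sigma}^2=A_T(\widehat{\mathbf{u}};\widehat{\phi},\widehat{\phi})=G(\widehat{\phi})\le\|g\|_{0,\Omega}\|\widehat{\phi}\|_{0,\Omega}\le C_P\|g\|_{0,\Omega}\|\nabla\widehat{\phi}\|_{0,\Omega}=\frac{C_P}{\sqrt{\kappa}}\,\|g\|_{0,\Omega}\,\vertiii{\widehat{\phi}}_{\Sigma},
\]
and dividing by $\vertiii{\widehat{\phi}}_{\Sigma}$ yields the asserted bound. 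I do not expect a serious obstacle: the only points worth flagging are that the convective form is non-symmetric, so one uses Lax--Milgram rather than the Riesz representation theorem, and that the a priori estimate comes out clean precisely because the skew-symmetric term vanishes when testing against the solution.
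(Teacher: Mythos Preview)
Your proof is correct and follows essentially the same approach as the paper: verify continuity and coercivity of $A_T(\widehat{\mathbf{u}};\cdot,\cdot)$, apply Lax--Milgram, and test against $\widehat\phi$ to obtain the bound via Poincar\'e. Your version is in fact more explicit than the paper's, which simply asserts that $A_T$ is continuous and coercive with constant $1$ and then writes down the final inequality \eqref{ATC}; your spelling out of the skew-symmetry argument for coercivity and the H\"older/embedding argument for boundedness fills in exactly those omitted details.
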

\begin{proof}
	Since $A_T$ is continuous and coercive with coercivity constant $1$, the uniqueness of the solution is evident by employing the Lax-Milgram Lemma. Further using the coercivity property, we obtain
	\begin{align}
		\vertiii{\widehat{{\phi}}}_{\Sigma}^2 \leq A_T(\widehat{\mathbf{u}}; \widehat{\phi}, \widehat{{\phi}}) \lesssim C_P \|g\|_{0,\Omega} \|\nabla \widehat{\phi}\|_{0,\Omega}, \label{ATC}
	\end{align} 
	which completes the proof. 
\end{proof}

\subsection{Stability of the coupled problem} 
In this section we focus on proving the uniqueness of the continuous solution. To this end, we first derive several auxiliary results that are crucial for demonstrating the stability of the problem \eqref{variation-1}.
\begin{lemma} \label{s-diff}
	For any given ${\phi}, \widehat{{\phi}} \in \Sigma$, we have the following estimate
	\begin{align}
		\vertiii{\mathbf{S}(\phi) - \mathbf{S}(\widehat{{\phi}})} \lesssim \dfrac{ C_q(1+C_P)} { \beta_0 \sqrt{ \kappa \mu_{min}}}  \big[ L_\mu \|\nabla S^1(\widehat{{\phi}})\|_{0,3,\Omega} + \alpha C_q(1+C_P) \|\mathbf{f}\|_{0, \Omega} \big] \vertiii{\phi - \widehat{{\phi}}}_{\Sigma}. \label{sdiff}
	\end{align}
\end{lemma}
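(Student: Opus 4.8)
The plan is to estimate the difference $\mathbf{S}(\phi) - \mathbf{S}(\widehat\phi) = (S^1(\phi) - S^1(\widehat\phi),\, S^2(\phi) - S^2(\widehat\phi))$ by subtracting the two instances of the decoupled Stokes problem \eqref{dvariation-1} and testing against the difference itself. Writing $(\bu,p) = \mathbf{S}(\phi)$ and $(\widehat\bu,\widehat p) = \mathbf{S}(\widehat\phi)$, both satisfy $A_V$-type relations but with \emph{different} viscosity arguments $\mu(\phi)$ and $\mu(\widehat\phi)$; so the subtraction does not directly give a clean equation for the difference. The first step is therefore to add and subtract a suitable term: write
\begin{align*}
	a_V(\phi;\bu,\bv) - a_V(\widehat\phi;\widehat\bu,\bv) = a_V(\phi;\bu - \widehat\bu,\bv) + \int_\Omega \big(\mu(\phi) - \mu(\widehat\phi)\big)\,\varepsilon(\widehat\bu):\varepsilon(\bv)\dx,
\end{align*}
so that the pair $(\bu - \widehat\bu,\, p - \widehat p)$ solves an $A_V[\phi;\cdot,\cdot]$ problem whose right-hand side is $F_\phi(\bv) - F_{\widehat\phi}(\bv) = (\alpha(\phi-\widehat\phi)\ff,\bv)$ minus the viscosity-mismatch term just isolated.

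Next I would apply the inf-sup bound of Remark \ref{cinfsup} (with viscosity argument $\phi$) to $(\bu-\widehat\bu,\,p-\widehat p)$:
\begin{align*}
	\beta_0\,\vertiii{\mathbf{S}(\phi) - \mathbf{S}(\widehat\phi)} \lesssim \sup_{(\bv,q)\neq(\cero,0)} \frac{1}{\vertiii{(\bv,q)}}\left[ \big(\alpha(\phi-\widehat\phi)\ff,\bv\big) - \int_\Omega \big(\mu(\phi)-\mu(\widehat\phi)\big)\varepsilon(\widehat\bu):\varepsilon(\bv)\dx \right].
\end{align*}
For the first term I would use Hölder, the Lipschitz-free estimate $\|\phi-\widehat\phi\|_{0,q,\Omega}\le C_q\|\phi-\widehat\phi\|_{1,\Omega}$ together with the Poincaré inequality (to pass from $\|\cdot\|_{1,\Omega}$ to $\|\nabla\cdot\|_{0,\Omega}$, producing the $(1+C_P)$ factors) and $\|\bv\|_{0,q,\Omega}\le C_q(1+C_P)\|\nabla\bv\|_{0,\Omega}$; dividing by $\vertiii{(\bv,q)}$ recovers a $1/\sqrt{\mu_{min}}$ and leaves $\alpha C_q^2(1+C_P)^2\|\ff\|_{0,\Omega}\,\|\nabla(\phi-\widehat\phi)\|_{0,\Omega}$, which after inserting $\|\nabla(\phi-\widehat\phi)\|_{0,\Omega} = \kappa^{-1/2}\vertiii{\phi-\widehat\phi}_\Sigma$ matches the second term in the bracket of \eqref{sdiff}. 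For the viscosity-mismatch term I would use the Lipschitz bound \eqref{mu-reg}, $|\mu(\phi)-\mu(\widehat\phi)|\le L_\mu|\phi-\widehat\phi|$, then a triple Hölder inequality with exponents $(6,3,2)$: $\int |\mu(\phi)-\mu(\widehat\phi)|\,|\varepsilon(\widehat\bu)|\,|\varepsilon(\bv)| \le L_\mu \|\phi-\widehat\phi\|_{0,6,\Omega}\|\varepsilon(\widehat\bu)\|_{0,3,\Omega}\|\varepsilon(\bv)\|_{0,2,\Omega}$. Bounding $\|\varepsilon(\widehat\bu)\|_{0,3,\Omega}\le\|\nabla S^1(\widehat\phi)\|_{0,3,\Omega}$, using the embedding constant $C_q$ (for $q=6$) and Poincaré on $\phi-\widehat\phi$, and $\|\varepsilon(\bv)\|_{0,2,\Omega}\lesssim\mu_{min}^{-1/2}\vertiii{(\bv,q)}$, dividing through yields the $L_\mu\|\nabla S^1(\widehat\phi)\|_{0,3,\Omega}$ contribution with the prefactor $C_q(1+C_P)/(\beta_0\sqrt{\kappa\mu_{min}})$. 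Summing the two contributions gives exactly \eqref{sdiff}.

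The main obstacle is the viscosity-mismatch term: one must choose the Hölder split so that $\varepsilon(\widehat\bu)$ lands in $L^3$ (since that is the norm appearing in \eqref{sdiff}, and it is finite only if $S^1(\widehat\phi)$ has extra regularity, which is implicitly assumed), while $\phi-\widehat\phi$ must sit in $L^6$ — valid in 2D via $H^1\hookrightarrow L^6$ — and $\varepsilon(\bv)$ stays in $L^2$ to be absorbed by the test-function norm. A secondary subtlety is that the inf-sup constant $\beta_0$ and the coercivity of $a_V$ depend on $\mu_{min},\mu_{max}$ only, so they are uniform in the argument $\phi$; this is what makes the estimate legitimate. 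Everything else — the Poincaré and embedding bookkeeping that produces the $(1+C_P)$ and $C_q$ powers, and the conversion between $\|\nabla\cdot\|_{0,\Omega}$ and $\vertiii{\cdot}_\Sigma$ — is routine.
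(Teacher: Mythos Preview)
Your proposal is correct and follows essentially the same route as the paper: the same add-and-subtract decomposition isolating the source-term difference $(\alpha(\phi-\widehat\phi)\ff,\bv)$ and the viscosity-mismatch term $\int(\mu(\phi)-\mu(\widehat\phi))\varepsilon(\widehat\bu):\varepsilon(\bv)$, then the inf-sup bound \eqref{cwellpos-0} combined with the H\"older split $(6,3,2)$ and the embedding $H^1\hookrightarrow L^6$ for the latter term. The only cosmetic discrepancy is that your opening sentence says ``testing against the difference itself,'' whereas you (correctly, and as the paper does) actually invoke the inf-sup condition over all test pairs; otherwise the argument and the bookkeeping of constants match the paper's proof.
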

\begin{proof}
	For any given $\phi, \widehat{{\phi}} \in \Sigma$, we define $\mathbf{S}(\phi)=(S^1(\phi), S^2(\phi)):=(\mathbf{u},p)$ and $\mathbf{S}(\widehat{{\phi}})= (S^1(\widehat{{\phi}}), S^2(\widehat{{\phi}})) :=(\widehat{\mathbf{u}}, \widehat{{p}})$. Recalling Eqn. \eqref{dvariation-1}, and by adding and subtracting suitable terms, we infer 
	\begin{align}
		A_V[\phi; (\mathbf{u},p)- (\widehat{\mathbf{u}}, \widehat{{p}}), (\mathbf{v}, q)] :&= \big( \alpha \mathbf{f}\phi, \mathbf{v}\big) - \big(\alpha \mathbf{f} \widehat{{\phi}}, \mathbf{v} \big) + A_V[\widehat{{\phi}}; (\widehat{\mathbf{u}}, \widehat{{p}}), (\mathbf{v}, q)] - A_V[\phi; (\widehat{\mathbf{u}}, \widehat{{p}}), (\mathbf{v}, q)] \nonumber \\
		&=: A_{V,1} + A_{V,2}. \label{Sdiff-1}  		
	\end{align}
	Applying the fact $\mathbf{f} \in [L^{2}(\Omega)]^2$, the Sobolev embedding theorem and the Poincar$\acute{\text{e}}$ inequality, we have
	\begin{align}
		|A_{V,1}| &\lesssim \alpha \|\mathbf{f}\|_{0,\Omega} \|\phi - \widehat{{\phi}} \|_{0,4,\Omega} \|\mathbf{v}\|_{0,4,\Omega}  \nonumber \\
		& \lesssim \alpha C_q^2 (1+C_P)^2 \|\mathbf{f}\|_{0, \Omega} \| \nabla(\phi - \widehat{{\phi}} )\|_{0,\Omega} \|\nabla \mathbf{v}\|_{0,\Omega}  \nonumber \\
		& \lesssim \dfrac{ \alpha C_q^2 (1+C_P)^2}{\sqrt{\mu_{min}}} \|\mathbf{f}\|_{0, \Omega} \| \nabla(\phi - \widehat{{\phi}} )\|_{0,\Omega} \vertiii{(\mathbf{v}, q)}. 
		\label{sdiff-2}
	\end{align} 
	Employing the Lipschitz continuity of $\mu$ and the Sobolev embedding theorem $H^1(\Omega) \hookrightarrow L^6(\Omega)$, yields 
	\begin{align}
		|A_{V,2}| &= a_V(\widehat{{\phi}}; \widehat{\mathbf{u}}, \mathbf{v}) - a_V(\phi; \widehat{\mathbf{u}}, \mathbf{v}) = \big(( \mu(\widehat{{\phi}}) - \mu(\phi)) \nabla \widehat{\mathbf{u}}, \nabla \mathbf{v} \big) \nonumber \\
		&\lesssim L_\mu  \| \phi - \widehat{{\phi}}\|_{0,6, \Omega} \|\nabla \widehat{\mathbf{u}}\|_{0,3, \Omega} \|\nabla \mathbf{v}\|_{0,\Omega} \nonumber \\
		& \lesssim (1+C_P)C_q L_\mu  \|\nabla (\phi - \widehat{{\phi}})\|_{0,\Omega} \|\nabla \widehat{\mathbf{u}}\|_{0,3, \Omega} \|\nabla \mathbf{v}\|_{0,\Omega} \nonumber \\
		& \lesssim \dfrac{ (1+C_P)C_q L_\mu}{\sqrt{\mu_{min}}}  \|\nabla (\phi - \widehat{{\phi}})\|_{0,\Omega} \|\nabla \widehat{\mathbf{u}}\|_{0,3, \Omega} \vertiii{(\mathbf{v},q)}. \label{sdiff-3}
	\end{align}
	Using the inf-sup condition for $A_V[\cdot; \cdot,\cdot]$ and the estimates \eqref{sdiff-2} and \eqref{sdiff-3}, we arrive at
	\begin{align}
		\beta_0 \vertiii{(\mathbf{u}- \widehat{\mathbf{u}}, p - \widehat{{p}})} & \leq \sup \limits_{(\mathbf{v}, q) \in \mathbf{V} \times Q} \dfrac{A_V[\phi; (\mathbf{u},p)- (\widehat{\mathbf{u}}, \widehat{{p}}), (\mathbf{v}, q)]}{\vertiii{(\mathbf{v}, q)}} \nonumber \\
		& \lesssim \dfrac{ (1+C_P)C_q }{\sqrt{\mu_{min}}} \big[ L_\mu  \|\nabla \widehat{\mathbf{u}}\|_{0,3, \Omega} + \alpha C_q (1+C_P) \|\mathbf{f}\|_{0, \Omega} \big]\| \nabla(\phi - \widehat{{\phi}} )\|_{0,\Omega}, \nonumber
	\end{align}
	thus, we can easily obtain the estimate \eqref{sdiff}.   
\end{proof}

\begin{lemma} \label{m-diff}
	For any given $ \mathbf{u}, \widehat{\mathbf{u}} \in \mathbf{V}_{div}$, the following result holds:
	\begin{align}
		\vertiii{ \mathbb M (\mathbf{u}) - \mathbb M (\widehat{\mathbf{u}})}_{\Sigma} \lesssim \dfrac{(1+C_P)^2 C_P C_q^2}{\kappa^{3/2}} \| g\|_{0,\Omega} \|\nabla(\mathbf{u}- \widehat{\mathbf{u}})\|_{0,\Omega}. \label{Mdiff}
	\end{align}
\end{lemma}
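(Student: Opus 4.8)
The plan is to mirror the proof of Lemma \ref{s-diff}, exploiting the coercivity of $A_T$ (with constant $1$ in the $\vertiii{\cdot}_\Sigma$ norm) established in Lemma \ref{d2_unique}. First I would set $\widehat{\phi} = \mathbb{M}(\mathbf{u})$ and $\widehat{\psi} := \mathbb{M}(\widehat{\mathbf{u}})$, so that both solve \eqref{dvariation-2} with the respective velocity fields. Subtracting the two equations and testing against $\psi = \widehat{\phi} - \widehat{\psi} \in \Sigma$, the diffusion form $a_T$ is the same in both equations, so the difference is controlled entirely by the discrepancy in the skew-symmetric convective form:
\begin{align*}
	A_T(\mathbf{u}; \widehat{\phi} - \widehat{\psi}, \psi) = c^S_T(\widehat{\mathbf{u}}; \widehat{\psi}, \psi) - c^S_T(\mathbf{u}; \widehat{\psi}, \psi) = c^S_T(\widehat{\mathbf{u}} - \mathbf{u}; \widehat{\psi}, \psi) \qquad \text{for all } \psi \in \Sigma.
\end{align*}
Taking $\psi = \widehat{\phi} - \widehat{\psi}$ and using coercivity of $A_T$ on the left, it remains to bound the right-hand side.

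The main estimate is then the bound on $c^S_T(\mathbf{u} - \widehat{\mathbf{u}}; \widehat{\psi}, \widehat{\phi}-\widehat{\psi})$. Writing this out via the definition of $c_T$ and $c^S_T$, each term has the form $\int_\Omega ((\mathbf{u}-\widehat{\mathbf{u}}) \cdot \nabla \chi_1)\chi_2 \, d\Omega$ with $\{\chi_1,\chi_2\} = \{\widehat{\psi}, \widehat{\phi}-\widehat{\psi}\}$. I would apply a generalized Hölder inequality splitting the three factors into $L^4 \times L^2 \times L^4$ (or $L^3 \times L^3 \times L^3$), then use the Sobolev embeddings $H^1_0(\Omega) \hookrightarrow L^q(\Omega)$ with the embedding constant $C_q$ together with the Poincaré inequality $\|\cdot\|_{0,\Omega} \le C_P \|\nabla \cdot\|_{0,\Omega}$, exactly as in \eqref{sdiff-2}--\eqref{sdiff-3}. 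This produces
\begin{align*}
	|c^S_T(\mathbf{u} - \widehat{\mathbf{u}}; \widehat{\psi}, \widehat{\phi} - \widehat{\psi})| \lesssim C_q^2 (1+C_P)^2 \|\nabla(\mathbf{u}-\widehat{\mathbf{u}})\|_{0,\Omega} \|\nabla \widehat{\psi}\|_{0,\Omega} \|\nabla(\widehat{\phi}-\widehat{\psi})\|_{0,\Omega}.
\end{align*}
Combining with coercivity $\vertiii{\widehat{\phi}-\widehat{\psi}}_\Sigma^2 = \kappa\|\nabla(\widehat{\phi}-\widehat{\psi})\|_{0,\Omega}^2 \le A_T(\mathbf{u}; \widehat{\phi}-\widehat{\psi}, \widehat{\phi}-\widehat{\psi})$ and cancelling one factor of $\|\nabla(\widehat{\phi}-\widehat{\psi})\|_{0,\Omega}$ gives a bound in terms of $\|\nabla \widehat{\psi}\|_{0,\Omega}$.

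The final step is to eliminate $\|\nabla \widehat{\psi}\|_{0,\Omega}$ in favor of data. Since $\widehat{\psi} = \mathbb{M}(\widehat{\mathbf{u}})$, Lemma \ref{d2_unique} gives $\sqrt{\kappa}\|\nabla \widehat{\psi}\|_{0,\Omega} = \vertiii{\mathbb{M}(\widehat{\mathbf{u}})}_\Sigma \lesssim (C_P/\sqrt{\kappa})\|g\|_{0,\Omega}$, i.e. $\|\nabla \widehat{\psi}\|_{0,\Omega} \lesssim (C_P/\kappa)\|g\|_{0,\Omega}$. Substituting, one factor of $\sqrt{\kappa}$ from $\vertiii{\mathbb{M}(\mathbf{u}) - \mathbb{M}(\widehat{\mathbf{u}})}_\Sigma = \sqrt{\kappa}\|\nabla(\widehat{\phi}-\widehat{\psi})\|_{0,\Omega}$, one from $\vertiii{\widehat\psi}_\Sigma$, and one from the coercivity cancellation assemble the $\kappa^{3/2}$ in the denominator, yielding \eqref{Mdiff}. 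The only mild subtlety — not really an obstacle — is keeping track of the powers of $(1+C_P)$ and $C_q$ so they match the claimed constant; the skew-symmetrization is harmless here since it just doubles the number of terms of the same type, and the divergence-free hypothesis $\mathbf{u}, \widehat{\mathbf{u}} \in \mathbf{V}_{div}$ is what makes $c^S_T$ the natural object, though it is not strictly needed for this particular bound.
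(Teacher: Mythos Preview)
Your proposal is correct and follows essentially the same approach as the paper: subtract the two instances of \eqref{dvariation-2}, test with the difference, reduce to the skew-symmetric convective term $c^S_T(\widehat{\mathbf{u}}-\mathbf{u};\,\cdot,\cdot)$, bound it via the $L^4\times L^2\times L^4$ H\"older splitting and the embedding $H^1\hookrightarrow L^4$, then invoke coercivity of $A_T$ and Lemma~\ref{d2_unique} to replace $\|\nabla\widehat{\psi}\|_{0,\Omega}$ by $(C_P/\kappa)\|g\|_{0,\Omega}$. Apart from notation (the paper writes $\phi=\mathbb M(\mathbf{u})$, $\widehat{\phi}=\mathbb M(\widehat{\mathbf{u}})$, $\eta_\phi=\phi-\widehat{\phi}$), the argument and the constants match line for line.
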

\begin{proof}
	For given $ \mathbf{u}, \widehat{\mathbf{u}} \in \mathbf{V}_{div}$, we define $\phi = \mathbb M(\mathbf{u})$ and $\widehat{{\phi}} = \mathbb M (\widehat{\mathbf{u}})$. We set $\eta_\phi= \phi -\widehat{{\phi}}$, and proceed as follows
	\begin{align}
		A_T (\mathbf{u}; \eta_\phi, \eta_\phi) &= A_T(\mathbf{u}; \phi, \eta_\phi) - A_T(\mathbf{u}; \widehat{{\phi}}, \eta_\phi) = (g, \eta_\phi) - A_T(\mathbf{u}; \widehat{{\phi}}, \eta_\phi)= A_T(\widehat{\mathbf{u}}; \widehat{{\phi}}, \eta_\phi) - A_T(\mathbf{u}; \widehat{{\phi}}, \eta_\phi) \nonumber \\
		&=c^S_T(\widehat{\mathbf{u}}; \widehat{{\phi}}, \eta_\phi) - c^S_T(\mathbf{u}; \widehat{{\phi}}, \eta_\phi)= \frac{1}{2}\big[\big( ( \widehat{\mathbf{u}} - \mathbf{u}) \cdot \nabla \widehat{{\phi}}, \eta_\phi \big)- \big( ( \widehat{\mathbf{u}} - \mathbf{u}) \cdot \nabla \eta_\phi, \widehat{\phi} \,\big) \big] \nonumber \\
		&\lesssim  \|\mathbf{u}- \widehat{\mathbf{u}}\|_{0,4, \Omega} \|\nabla \widehat{{\phi}}\|_{0,\Omega} \|\eta_\phi\|_{0,4, \Omega} +  \|\mathbf{u}- \widehat{\mathbf{u}}\|_{0,4, \Omega}  \|\nabla \eta_\phi\|_{0, \Omega} \| \widehat{{\phi}}\|_{0,4,\Omega}. \nonumber
		\intertext{Using the Sobolev embedding theorem $H^1(\Omega) \hookrightarrow L^4(\Omega)$, it holds that}
		|A_T (\mathbf{u}; \eta_\phi, \eta_\phi)| & \lesssim (1+C_P)^2 C_q^2 \|\nabla(\mathbf{u}- \widehat{\mathbf{u}})\|_{0,\Omega} \|\nabla \widehat{{\phi}}\|_{0,\Omega} \| \nabla \eta_\phi\|_{0,\Omega} \nonumber. 
	\end{align}
	Employing the coercivity of $A_T$ and Lemma \ref{d2_unique}, we obtain
	\begin{align}
		\vertiii{\eta_\phi}_{\Sigma} &\lesssim  \dfrac{(1+C_P)^2 C_q^2}{ \sqrt{\kappa}} \|\nabla(\mathbf{u}- \widehat{\mathbf{u}})\|_{0,\Omega} \|\nabla \widehat{{\phi}}\|_{0,\Omega}  \nonumber \\
		& \lesssim \dfrac{(1+C_P)^2 C_P C_q^2}{\kappa^{3/2}} \| g\|_{0,\Omega} \|\nabla(\mathbf{u}- \widehat{\mathbf{u}})\|_{0,\Omega}, 
	\end{align}
	from the above analysis, the estimate \eqref{Mdiff} is evident. 
\end{proof}

Hereafter, we choose $\rho:=\frac{ C_P }{ \sqrt{\kappa}} \| g\|_{0,\Omega}$,
and we further define $\widehat{\Sigma}:= \big\{ \phi \in \Sigma \,\,\text{such that}\,\, \vertiii{\phi}_\Sigma \leq  \rho \big\} \subset \Sigma$.
\begin{lemma} \label{cnts0}
	On $\widehat{\Sigma}$, we have the following results: \newline
	(i). For any $\phi \in \widehat{\Sigma}$, then  $\mathbb T (\phi) \in \widehat{\Sigma}$. \newline
	(ii). For any $\phi, \widehat{{\phi}} \in \widehat{\Sigma}$, the map $\mathbb T$ satisfies
	\begin{align}
		\vertiii{\mathbb T(\phi) - \mathbb T(\widehat{{\phi}}) }_\Sigma \lesssim  C_{\mathbb T} \rho \big[ L_\mu \|\nabla \widehat{\mathbf{u}}\|_{0,3,\Omega} + \alpha (1+C_P)C_q \|\mathbf{f}\|_{0, \Omega} \big] \vertiii{\phi - \widehat{{\phi}}}_{\Sigma}, \label{stab-0}
	\end{align}
	where the positive constant $C_\mathbb T$ is given by \eqref{cnts-2}.
\end{lemma}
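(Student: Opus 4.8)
The plan is to derive both items by composing the a~priori bounds of Lemmas~\ref{d1_unique}--\ref{d2_unique} with the Lipschitz-type estimates of Lemmas~\ref{s-diff}--\ref{m-diff}, using that $\mathbb T=\mathbb M\circ S^1$ by construction. For item (i), I would first check that $\mathbb T$ is well defined on $\widehat\Sigma$: given $\phi\in\widehat\Sigma$, the pair $\mathbf S(\phi)=(S^1(\phi),S^2(\phi))$ solves \eqref{dvariation-1}, so $b(S^1(\phi),q)=0$ for all $q\in Q=L^2_0(\Omega)$; since $\nabla\cdot S^1(\phi)$ has zero mean because $S^1(\phi)\in\mathbf V=[H^1_0(\Omega)]^2$, the choice $q=\nabla\cdot S^1(\phi)$ forces $\nabla\cdot S^1(\phi)=0$, i.e.\ $S^1(\phi)\in\mathbf V_{div}$, so that $\mathbb M(S^1(\phi))$ is meaningful. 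Then I would invoke Lemma~\ref{d2_unique} with $\widehat{\mathbf u}=S^1(\phi)$: since $c^S_T(\mathbf v;\psi,\psi)$ vanishes on the diagonal and $a_T(\psi,\psi)=\vertiii{\psi}_\Sigma^2$, the coercivity constant of $A_T(\mathbf v;\cdot,\cdot)$ is exactly $1$, hence the bound sharpens to $\vertiii{\mathbb T(\phi)}_\Sigma=\vertiii{\mathbb M(S^1(\phi))}_\Sigma\le \tfrac{C_P}{\sqrt\kappa}\|g\|_{0,\Omega}=\rho$, which is precisely $\mathbb T(\phi)\in\widehat\Sigma$.

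For item (ii), given $\phi,\widehat\phi\in\widehat\Sigma$ I write $\mathbb T(\phi)-\mathbb T(\widehat\phi)=\mathbb M(S^1(\phi))-\mathbb M(S^1(\widehat\phi))$ with both arguments in $\mathbf V_{div}$ by the previous step. Applying Lemma~\ref{m-diff} with $\mathbf u=S^1(\phi)$, $\widehat{\mathbf u}=S^1(\widehat\phi)$, then bounding $\|\nabla(S^1(\phi)-S^1(\widehat\phi))\|_{0,\Omega}\le \mu_{min}^{-1/2}\vertiii{\mathbf S(\phi)-\mathbf S(\widehat\phi)}$ from the definition of $\vertiii{(\cdot,\cdot)}$, and finally inserting \eqref{sdiff}, I obtain
\begin{align*}
\vertiii{\mathbb T(\phi)-\mathbb T(\widehat\phi)}_\Sigma \lesssim \frac{(1+C_P)^2 C_P C_q^2}{\kappa^{3/2}}\|g\|_{0,\Omega}\cdot\frac{1}{\sqrt{\mu_{min}}}\cdot\frac{C_q(1+C_P)}{\beta_0\sqrt{\kappa\mu_{min}}}\,\big[L_\mu\|\nabla\widehat{\mathbf u}\|_{0,3,\Omega}+\alpha C_q(1+C_P)\|\mathbf f\|_{0,\Omega}\big]\vertiii{\phi-\widehat\phi}_\Sigma .
\end{align*}
Using $\rho=\tfrac{C_P}{\sqrt\kappa}\|g\|_{0,\Omega}$ to replace $\tfrac{C_P}{\kappa^{3/2}}\|g\|_{0,\Omega}$ by $\tfrac{\rho}{\kappa}$ and collecting the remaining powers of $1+C_P$, $C_q$, $\kappa$, $\mu_{min}$, $\beta_0$ into the single constant $C_{\mathbb T}$ of \eqref{cnts-2} yields exactly \eqref{stab-0}.

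The argument is essentially bookkeeping once the composition $\mathbb M\circ S^1$ is set up, so there is no genuine analytical obstacle; the two points deserving a short explicit check are verifying that $S^1$ maps $\widehat\Sigma$ into $\mathbf V_{div}$ (which legitimizes the composition) and observing that the coercivity constant of $A_T$ equals $1$, so that item (i) closes with the precise radius $\rho$ rather than a generic multiple of it. As usual, $\|\nabla S^1(\widehat\phi)\|_{0,3,\Omega}$ is taken finite via the elliptic regularity already underlying Lemma~\ref{s-diff}.
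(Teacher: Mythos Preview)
Your argument is correct and follows essentially the same route as the paper: for (i) you invoke Lemma~\ref{d2_unique} applied to $\widehat{\mathbf u}=S^1(\phi)$, and for (ii) you chain Lemma~\ref{m-diff} with the bound $\|\nabla(S^1(\phi)-S^1(\widehat\phi))\|_{0,\Omega}\le\mu_{min}^{-1/2}\vertiii{\mathbf S(\phi)-\mathbf S(\widehat\phi)}$ and then Lemma~\ref{s-diff}, exactly as the paper does in \eqref{cnts-2}. Your explicit check that $S^1(\phi)\in\mathbf V_{div}$ and your observation that the coercivity constant of $A_T$ equals~$1$ (so item~(i) closes with the exact radius $\rho$ rather than a multiple of it) are details the paper leaves implicit; they make your write-up slightly more careful but do not change the strategy.
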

\begin{proof}
	(i). For any $\mathbf{u} \in \mathbf{V}_{div}$, we define $\mathbb M (\mathbf{u}) =\phi \in \widehat{\Sigma}$ denotes the solution of \eqref{dvariation-2}. Applying the definition of $\mathbb T$ and Lemma \ref{d2_unique}, we get
	\begin{align}
		\vertiii{\mathbb T (\phi)}_\Sigma =  \vertiii{\mathbb M (\mathbf{u})}_\Sigma \lesssim \dfrac{C_P}{ \sqrt{\kappa}} \|g\|_{0,\Omega} \leq \rho.
	\end{align}
	(ii). For any $\phi, \widehat{{\phi}} \in \widehat{\Sigma}$, we define $\mathbf{S}(\phi)=(S^1(\phi), S^2(\phi)):= (\mathbf{u},p)$ and $\mathbf{S}(\widehat{{\phi}})=(S^1(\widehat{{\phi}}), S^2(\widehat{{\phi}})):=(\widehat{\mathbf{u}}, \widehat{{p}})$. Employing Lemmas \ref{s-diff} and \ref{m-diff}, we get
	\begin{align}
		\vertiii{ \mathbb T(\phi) - \mathbb T(\widehat{{\phi}}) }_\Sigma &= \vertiii{ \mathbb M (S^1(\phi)) - \mathbb M(S^1(\widehat{{\phi}})) }_\Sigma \nonumber \\ 
		& \lesssim \dfrac{(1+C_P)^2 C_P C_q^2}{\kappa^{3/2} \mu_{min}^{1/2}} \| g\|_{0,\Omega} \vertiii{ \mathbf{S}(\phi)- \mathbf{S}(\widehat{\phi})} \nonumber \\
		& \lesssim  \dfrac{ (1+C_P)^3 C_q^3 } {\beta_0 \kappa^{3/2} {\mu_{min}}} \rho \big[  L_\mu \|\nabla S^1(\widehat{{\phi}})\|_{0,3, \Omega} + \alpha (1+C_P)C_q \|\mathbf{f}\|_{0, \Omega} \big] \vertiii{\phi - \widehat{{\phi}}}_{\Sigma},  \label{cnts-2}
	\end{align}
	thus, the proof of \eqref{stab-0} is evident from the above analysis. 
\end{proof}

{We once again recall that the existence of a continuous solution can be derived following \cite{mfem0}, as discussed in Section 2. Therefore,} we are now prepared to establish the uniqueness of the continuous solution using the contraction mapping theorem, stated as follows:
\begin{theorem} \label{unique-1}
	Let $(\mathbf{u},p, \phi) \in \mathbf{V} \times Q \times \widehat{\Sigma}$ be any solution of the problem \eqref{variation-1}. Furthermore, we assume that $\mathbf{u} \in [W^1_3(\Omega)]^2$ and the data are such that the following bound holds
	\begin{align}
		C_{\mathbb T} \rho \big[ L_\mu \|\nabla {\mathbf{u}}\|_{0,3, \Omega} + \alpha (1+C_P)C_q \|\mathbf{f}\|_{0, \Omega} \big] <1, \label{unq0}
	\end{align}
	then the problem \eqref{variation-1} has a unique solution.
\end{theorem}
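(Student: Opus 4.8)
The plan is to deduce the theorem directly from Lemma \ref{cnts0}, which already establishes that $\mathbb{T}$ maps $\widehat{\Sigma}$ into itself and is Lipschitz on $\widehat{\Sigma}$ with Lipschitz constant
\[
\ell := C_{\mathbb T}\,\rho\,\big[\,L_\mu \|\nabla \mathbf{u}\|_{0,3,\Omega} + \alpha (1+C_P)C_q \|\mathbf{f}\|_{0,\Omega}\,\big].
\]
First I would recall that existence of a solution $(\mathbf u,p,\phi)\in\mathbf V\times Q\times\widehat\Sigma$ is already granted (via \cite{mfem0}, as noted in Section~\ref{sec:2}); equivalently, by the construction in Section~\ref{cnt_fixed}, any such $\phi$ is a fixed point of $\mathbb{T}$ in $\widehat{\Sigma}$. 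The hypothesis \eqref{unq0} is precisely $\ell<1$, so $\mathbb{T}:\widehat{\Sigma}\to\widehat{\Sigma}$ is a contraction on the closed ball $\widehat{\Sigma}$ of the Banach space $\Sigma$ (closedness of $\widehat\Sigma$ follows since $\vertiii{\cdot}_\Sigma$ is an equivalent norm on $\Sigma$, which is complete). By the Banach fixed-point theorem, $\mathbb{T}$ has exactly one fixed point in $\widehat{\Sigma}$, which gives uniqueness of $\phi$; uniqueness of $(\mathbf u,p)=\mathbf S(\phi)$ then follows from Lemma \ref{d1_unique}. This yields uniqueness of the full triple.

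A small subtlety I would address carefully is that the Lipschitz estimate \eqref{stab-0} of Lemma \ref{cnts0} involves $\|\nabla\widehat{\mathbf u}\|_{0,3,\Omega}=\|\nabla S^1(\widehat\phi)\|_{0,3,\Omega}$ for the \emph{second} argument $\widehat\phi$, so to get a genuine contraction constant independent of the pair $(\phi,\widehat\phi)$ one must bound this $W^1_3$-seminorm uniformly over $\widehat\phi\in\widehat\Sigma$. The clean way to do this is to fix one distinguished point of $\widehat\Sigma$ — namely the actual solution $\phi$ of \eqref{variation-1}, whose velocity component $\mathbf u=S^1(\phi)$ is assumed to lie in $[W^1_3(\Omega)]^2$ — and run the fixed-point iteration/contraction argument by comparing an arbitrary $\widehat\phi$ against this fixed $\phi$. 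Concretely, applying \eqref{stab-0} with the second slot always occupied by $\phi$ gives $\vertiii{\mathbb T(\widehat\phi)-\phi}_\Sigma=\vertiii{\mathbb T(\widehat\phi)-\mathbb T(\phi)}_\Sigma\le \ell\,\vertiii{\widehat\phi-\phi}_\Sigma$ with $\ell$ as above; since $\ell<1$ by \eqref{unq0}, any fixed point $\widehat\phi$ of $\mathbb T$ satisfies $\vertiii{\widehat\phi-\phi}_\Sigma\le\ell\vertiii{\widehat\phi-\phi}_\Sigma$, forcing $\widehat\phi=\phi$.

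I expect the main (and essentially only) obstacle to be bookkeeping rather than anything deep: making sure the constant $C_{\mathbb T}$ from \eqref{cnts-2} is used consistently, that the regularity assumption $\mathbf u\in[W^1_3(\Omega)]^2$ is invoked exactly where the $\|\nabla S^1(\widehat\phi)\|_{0,3,\Omega}$ term appears, and that the norm equivalence needed for completeness of $\widehat\Sigma$ is noted. No new analytic ingredient beyond Lemmas \ref{d1_unique}, \ref{d2_unique}, \ref{s-diff}, \ref{m-diff} and \ref{cnts0} is required; the proof is a two-line application of the Banach/contraction theorem once the uniform Lipschitz bound is pinned to the reference solution $\phi$.
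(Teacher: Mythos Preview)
Your proposal is correct and follows essentially the same route as the paper: invoke Lemma~\ref{cnts0} to get that $\mathbb{T}$ maps $\widehat{\Sigma}$ into itself with the Lipschitz bound \eqref{stab-0}, then use assumption~\eqref{unq0} to conclude contraction and hence uniqueness. Your handling of the subtlety---pinning the second argument in \eqref{stab-0} to the given solution $\phi$ so that the $W^1_3$ term is exactly $\|\nabla\mathbf{u}\|_{0,3,\Omega}$---is in fact more explicit than the paper's own two-line proof, which simply asserts that \eqref{unq0} makes $\mathbb{T}$ a contraction without spelling out this point.
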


\begin{proof}
	Applying Lemma \ref{cnts0}, we easily obtain the continuity of the map $\mathbb T$ on ball $\widehat{\Sigma}$. Further the assumption \eqref{unq0} claims that $\mathbb T$ is a contraction map. 
\end{proof}

\section{Stabilized virtual element framework} \label{sec:3}
In this section we present a local projection-based stabilized virtual element formulation of problem (S). We begin with the mesh regularity assumption.

\noindent \textbf{(A1)} \textbf{Mesh assumption.} \label{mesh_reg}
We denote a sequence of decompositions of $\Omega \subset \mathbb R^2$ into non-overlapping simple polygonal element by $\{\Omega_h\}_{h>0}$. Each polygonal element is denoted by $E \in \Omega_h$ and has a finite  diameter $h_E$. The maximum diameter across all polygonal elements in $\{ \Omega_h\}_{h>0}$ is given by $h:= \sup_{E \in \Omega_h} h_E$. Following the standard VEM framework \cite{vem1,vem5}, we assume that there exists a positive constant $\gamma_0$, independent of the mesh size $h$, such that each element $E\in\Omega_h$ satisfies the following mesh regularity assumptions:
\begin{itemize}
	\item each element $E$ is star-shaped with respect to a ball $B_{E}$ of radius  $\geq \gamma_0 h_E$;
	\item each edge $e \in \partial E$ has a length $|e| \geq \gamma_0  h_E$.
\end{itemize}
It is noteworthy that the regularity conditions mentioned above may be relaxed in numerous physical applications, as explored by Beirao da Veiga et al. in \cite{vem44}. 

Let $k \in \mathbb N$ and $\mathbf{m} \in \mathbb N^2$. For any $E \in \Omega_h$, let $|E|$ and $\mathbf{x}_E=(x_{E,1},x_{E,2})$ denote the area of the element $E$ and its centroid, respectively. We further assume that $\mathbb P_k(E)$ represents the space of polynomials of degree $\leq k$ and $\mathbb P_{-1}(E)=\{0\}$. A standard basis associated with the polynomial space $\mathbb P_k(E)$ is $\mathbb M_{k}(E)$, known as the set of normalized monomials, given as follows
\begin{align*}
	\mathbb M_k(E)= \Big\{ \Big(\dfrac{ \mathbf{x}-\mathbf{x}_E}{h_E} \Big)^{\mathbf{m}}, |\mathbf{m}| \leq k \Big\},
\end{align*}
where $\mathbf{m}=(m_1,m_2) \in \mathbb{N}^2$ is a multi-index such that $|\mathbf{m}|=m_1+m_2$, and $\mathbf{x}^{\mathbf{m}}= x_1^{m_1}x_2^{m_2}$.

We now introduce the standard polynomial projectors. For any $E \in \Omega_h$
\begin{itemize}
	\item the $H^1$-energy projection operator  $\Pi^{\nabla,E}_k: H^1(E) \rightarrow \mathbb{P}_k(E)$, given by
	\begin{align}
		\begin{cases}
			\int_E \nabla (w - \Pi^{\nabla,E}_k w) \cdot \nabla m_k \, dE = 0, \qquad \forall \, w \in H^1(E)\,\, \text{and} \,\, m_k \in \mathbb{P}_k(E),\\
			\int_{\partial E} (w - \Pi^{\nabla,E}_k w) \, ds=0,
		\end{cases}
		\label{H1proj}
	\end{align}
	with extension for the vector field $\boldsymbol{\Pi}^{\nabla,E}_k: [H^1(E)]^2 \rightarrow [\mathbb{P}_k(E)]^2$.
	\item  the $L^2$-projection operator $\Pi^{0,E}_k: L^2(E) \rightarrow \mathbb{P}_k(E)$, defined by
	\begin{align}
		\int_E (w - \Pi^{0,E}_k w) m_k \, dE = 0, \qquad \forall \,\, w \in L^2(E)\,\, \text{and} \,\, m_k \in \mathbb{P}_k(E), \label{l2proj}
	\end{align}
	with extension for vector field $\boldsymbol{\Pi}^{0,E}_k: [L^2(E)]^2 \rightarrow [\mathbb{P}_k(E)]^2$.
\end{itemize} 
\begin{remark} \label{stabilityP}
	(Stability of $\Pi^{0,E}_k$ in $L^q(E)$ for any $q\geq 2$) Utilizing the polynomial inverse inequality presented in \cite{scott}, we deduce that for any polynomial $p_k \in \mathbb P_k(E)$:
	\begin{align}
		\|p_k\|_{0,q,E} \lesssim h^{2/q-2/p}_E \|p_k\|_{0,p,E} \qquad \qquad \text{for any} \,\,\,p,q \in [0, \infty]. \label{inverse2}
	\end{align} 
	Therefore, for any $v \in L^2(E)$, we have
	\begin{align}
		\| \Pi^{0,E}_k v\|_{0,q,E} &\lesssim h^{2/q-1}_E \| \Pi^{0,E}_k v\|_{0,E}.
		\intertext{Using the stability of $\Pi^{0,E}_k$ in $L^2(E)$, it holds}
		\| \Pi^{0,E}_k v\|_{0,q,E} &\lesssim h^{2/q-1}_E \| v\|_{0,E}.
		\intertext{We now choose $r>0$ such that $\frac{1}{q}+\frac{1}{r}=\frac{1}{2}$, and then employing the H$\ddot{\text{o}}$lder inequality }
		\| \Pi^{0,E}_k v\|_{0,q,E} & \lesssim h^{\frac{2}{q}-1}_E \| 1\|_{0,r,E} \| v\|_{0,q,E} \nonumber \\
		& \lesssim h^{\frac{2}{q}-1}_E h^{2/r}_E \|v\|_{0,q,E} \nonumber \\
		& \lesssim h^{\frac{2}{q}-1}_E h^{2(\frac{1}{2}- \frac{1}{q})}_E \|v\|_{0,q,E} \nonumber \\
		& \lesssim  \|v\|_{0,q,E}.
	\end{align}
	Thus, the $L^2$-projection operator satisfies the stability property with respect to $L^q(E)$ norm for any $q \geq 2$.
\end{remark}
\subsection{Virtual element spaces }  
This section introduces the $H^1$-conforming virtual space for velocity vector field, temperature field, and  pressure field following the VEM framework \cite{vem25}. For each $E \in \Omega_h$, we define
\begin{align}
	V_h(E):= \Big\{ w_h \in H^1(E) \cap C^0( \partial{E}) \,\, \text{such that} \,\,\, &(i) \,\,\Delta w_h \in \mathbb P_k(E), \nonumber \\ &(ii)\, \,  w_{h|e} \in \mathbb P_k(e) \,\, \forall \,\, e \in \partial E,  \nonumber \\  &(iii) \,\, \big(w_h-\Pi_k^{\nabla,E}w_h, m_\alpha \big)_{E}=0, \, \forall \, m_\alpha \in \mathbb{M}^\ast_{k-1}(E) \cup \mathbb{M}^\ast_{k}(E) \Big\}, \nonumber
\end{align}
where $\mathbb{M}^\ast_{k}(E)$ is the set of monomials of degree equal to $k$.

Furthermore, we introduce following sets of linear operators $\mathbf{DF_1}$, $ \mathbf{DF_2}$, and $\mathbf{DF_3}: V_h(E) \rightarrow \mathbb R$, which constitute the degrees of freedom for the local virtual space $V_h(E)$
\begin{itemize}
	\item $\mathbf{DF_1}(w_h)$: the values of $w_h$ evaluated at each vertices of $E$;
	\item $\mathbf{DF_2}(w_h)$: the values of $w_h$ evaluated at $(k-1)$ distinct points of each edge $e \in \partial E$;
	\item $\mathbf{DF_3}(w_h)$: the internal moments of $w_h$ up to order $k-2$, 
	\begin{align*}
		\frac{1}{|E|} \, \int_E w_h m_\alpha \, dE \quad \forall \, m_\alpha \in \mathbb{M}_{k-2}(E).
	\end{align*}	
\end{itemize}
Therefore, the global virtual space is given as follows
\begin{align}
	V_h:=\{ w_h \in H^1(\Omega) \, \, \text{such that}\, \, w_{h|E} \in V_h(E) \, \,  \forall\, E \in \Omega_h \}.
\end{align}
We now define the global discrete velocities space $\mathbf{V}_{h}$, the pressure space $Q_{h}$ and the temperature space $\Sigma_h$, given as follows:
\begin{align}
	\mathbf{V}_{h} &:= \{ \mathbf{w}_h \in [H^1(\Omega)]^2 \cap \mathbf{V} \,\, \,\, \text{such that}\,\,\,\, \mathbf{w}_{h|E} \in [V_h(E)]^2 \,\,\,\, \forall \,\, E \in \Omega_h\}, \label{dspace-1} \\
	Q_{h} &:= \{ q_h \in H^1(\Omega)\cap Q \,\, \,\, \text{such that}\,\,\,\, q_{h|E} \in V_h(E) \,\, \,\,\forall \,\, E \in \Omega_h\}, \label{dspace-2} \\
	\Sigma_{h} &:= \{ \psi_h \in H^1(\Omega) \cap \Sigma \,\,\,\, \text{such that}\,\, \,\, \psi_{h|E} \in V_h(E) \,\,\,\, \forall \,\, E \in \Omega_h\}. \label{dspace-3}
\end{align} 

\begin{remark}
	The local space $\mathbf{V}_h(E)$ preserves the polynomial inclusion as $[\mathbb{P}_k(E)]^2 \subseteq \mathbf{V}_{h}(E)$, and the following operators are computable through the degrees of freedom $\mathbf{DF_1}$--$\mathbf{DF_3}$
	\begin{align*}
		\boldsymbol{\Pi}^{\nabla,E}_k: \mathbf{V}_{h}(E) \rightarrow [\mathbb{P}_k(E)]^2, \qquad	\boldsymbol{\Pi}^{0,E}_k: \mathbf{V}_{h}(E) \rightarrow [\mathbb{P}_k(E)]^2, \qquad  
		\boldsymbol{\Pi}^{0,E}_{k-1}: \nabla \mathbf{V}_{h}(E) \rightarrow [\mathbb{P}_{k-1}(E)]^{2 \times 2}.
	\end{align*}
\end{remark}
Before going to the next step, we note that for any element $E \in \Omega_{h}$, the local counterparts of the continuous forms $a_V, b,a_T$ and $c^S_T$ are denoted by $a^E_V, b^E, a_T^E$ and $c_T^{S,E}$, respectively. Furthermore
\begin{align*}
	a_{V}(\phi; \mathbf{v},\mathbf{w}) &= \sum_{E \in \Omega_{h}} a^E_{V}(\phi; \mathbf{v},\mathbf{w}),  \qquad \quad \,\,\,\, b(\mathbf{v},q) = \sum_{E \in \Omega_{h}} b^{E}(\mathbf{v},q), \nonumber \\
	a_{T}(\phi, \psi) &= \sum_{E \in \Omega_{h}} a^E_{T}(\phi, \psi),  \qquad \,\, \,\,\,
	c^S_T(\mathbf{w}; \phi,\psi) = \sum_{E \in \Omega_{h}} c^{S,E}_T(\mathbf{w}; \phi, \psi).
\end{align*}
Following this, we will apply the same decomposition to the external load terms in the subsequent analysis.

\subsection{Virtual element forms} 
Hereafter we discuss the virtual element approximation of the continuous forms and external load terms, which are easily computable by the degrees of freedom \textbf{DF}'s.

\noindent $\bullet$ Following the standard VEM literature \cite{vem25,vem5,vem04}, for any $\mathbf{v}_h, \mathbf{w}_h \in \mathbf{V}_h, q_h \in Q_h$, and $\phi_h, \psi_h \in \Sigma_h$,  the local discrete forms corresponding to continuous forms $a_V, b, a_T$ and $c^S_T$ are given as follows:
\begin{align}
	a^E_{V,h}( \phi_h; \mathbf{v}_h, \mathbf{w}_h) :&=  \int_E \mu(\Pi^{0,E}_k \phi_h) \boldsymbol{\Pi}^{0,E}_{k-1} \varepsilon (\mathbf{v}_h): \boldsymbol{\Pi}^{0,E}_{k-1} \varepsilon (\mathbf{w}_h)\, dE \,\,+ \nonumber \\ & \qquad  \mu(\Pi^{0,E}_0 \phi_h) S^E_ \nabla \big((\mathbf{I}-\boldsymbol{\Pi}^{\nabla,E}_k) \mathbf{v}_h,(\mathbf{I}-\boldsymbol{\Pi}^{\nabla,E}_k) \mathbf{w}_h \big),  \nonumber\\ 
	b^E_{h}(\mathbf{v}_h, q_h) :&= \int_E \Pi^{0,E}_{k-1} \nabla \cdot \mathbf{v}_h \Pi^{0,E}_{k} q_h\, dE, \nonumber \\
	a^E_{T,h}(\phi_h, \psi_h):&= \int_{E} \kappa \boldsymbol{\Pi}^{0,E}_{k-1} \nabla \phi_h \cdot \boldsymbol{\Pi}^{0,E}_{k-1} \nabla \psi_h \, dE + \kappa S^E_1 \big((I-\Pi^{\nabla,E}_k) \phi_h,(I- \Pi^{\nabla,E}_k) \psi_h \big), \nonumber \\
	c^{E}_{T,h}(\mathbf{v}_h; \phi_h,\psi_h):&= \int_E  \big[\boldsymbol{\Pi}^{0,E}_{k} \mathbf{v}_h \cdot	\boldsymbol{\Pi}^{0,E}_{k-1} \nabla \phi_h \big] \Pi^{0,E}_k \psi_h\, dE,  \nonumber \\
	c^{S,E}_{T,h}(\mathbf{v}_h; \phi_h,\psi_h):&= \dfrac{1}{2} \big[c^{E}_{T,h}(\mathbf{v}_h; \phi_h,\psi_h) - c^{E}_{T,h}(\mathbf{v}_h; \psi_h,\phi_h) \big].  \nonumber 
\end{align}
Moreover, the global forms can be given as follows
\begin{align*}
	a_{V,h}( \phi_h; \mathbf{v}_h,\mathbf{w}_h) &= \sum_{E \in \Omega_{h}} a^E_{V,h}(\phi_h; \mathbf{v}_h,\mathbf{w}_h),  \qquad \quad  b_{h}(\mathbf{v}_h,q_h) = \sum_{E \in \Omega_{h}} b^{E}_{h}(\mathbf{v}_h,q_h), \nonumber \\
	a_{T,h}(\phi_h, \psi_h) &= \sum_{E \in \Omega_{h}} a^E_{T,h}(\phi_h, \psi_h),  \qquad 
	c^S_{T,h}(\mathbf{v}_h; \phi_h,\psi_h) = \sum_{E \in \Omega_{h}} c^{S,E}_{T,h}(\mathbf{v}_h; \phi_h, \psi_h).
\end{align*}
For each $E \in \Omega_h$, the local VEM stabilization terms $S^E_\nabla(\cdot,\cdot): \mathbf{V}_{h}(E) \times \mathbf{V}_{h}(E) \rightarrow \mathbb{R}$ and $S^E_1(\cdot, \cdot): \Sigma_h(E) \times \Sigma_h(E) \rightarrow \mathbb R$ are the VEM computable symmetric positive definite bilinear forms such that 
\begin{align}
	\theta_{1\ast} \ \|\nabla \mathbf{v}_h\|_{0,E}^2 \leq S^E_\nabla(\mathbf{v}_h, \mathbf{v}_h)	 \leq
	\theta_1^\ast  \|\nabla \mathbf{v}_h\|_{0,E}^2 \qquad  \forall\, \mathbf{v}_h \in \mathbf{V}_{h}(E) \cap ker(\boldsymbol{\Pi}^{\nabla,E}_{k}), \label{vem-a} \\
	\theta_{2\ast} \ \| \nabla \psi_h\|_{0,E}^2 \leq S^E_1(\psi_h, \psi_h) \leq
	\theta_2^\ast  \| \nabla \psi_h\|_{0,E}^2 \qquad  \forall\, \psi_h \in \Sigma_{h}(E) \cap ker(\Pi^{\nabla,E}_{k}), \label{vem-b}
\end{align}
where the positive constants $\theta_{1\ast} \leq \theta_1^\ast$ and $\theta_{2\ast} \leq\theta_2^\ast$ are independent of the decomposition of $\Omega$. \newline
\noindent $\bullet$ Discrete load terms: For given $\phi_h \in \Sigma_h$, we define
\begin{align}
	{F}_{h,\phi_h}: \mathbf{V}_h \rightarrow \mathbb R \quad \qquad {F}_{h,\phi_h}(\mathbf{v}_h)&:= \sum_{E \in \Omega_h} \big( \alpha \mathbf{f} \Pi^{0,E}_k \phi_h, \boldsymbol{\Pi}^{0,E}_k \mathbf{v}_h\big)_E, \nonumber \\
	G_{h}: \Sigma_h \rightarrow \mathbb{R} \quad \qquad G_{h}(\psi_h)&:= \sum_{E \in \Omega_h} \big( g, \Pi^{0,E}_k \psi_h\big)_E. \nonumber 
\end{align}
Following these, we set for all $(\mathbf{v}_h,p_h), (\mathbf{w}_h, q_h) \in \mathbf{V}_h \times Q_h$, and $\phi_h, \psi_h \in \Sigma_h$,
\begin{align}
	A_{V,h}[\phi_h;(\mathbf{v}_h, p_h), (\mathbf{w}_h,q_h) ] &:= a_{V,h}(\phi_h; \mathbf{v}_h, \mathbf{w}_h)- b_h(\mathbf{w}_h, p_h) + b_h(\mathbf{v}_h, q_h), \\
	A_{T,h}(\mathbf{v}_h; \phi_h, \psi_h) &:= a_{T,h}(\phi_h, \psi_h)+ c^S_{T,h}(\mathbf{v}_h; \phi_h, \psi_h).
\end{align}
Thus, the classical virtual element problem for the coupled Stokes-Temperature equation $(S)$ is given as follows:
\begin{align}
	\begin{cases}
		\text{Find}\,\, (\mathbf{u}_h, p_h,\phi_h) \in \mathbf{V}_h \times Q_h \times \Sigma_h,\,\, \text{such that} \\
		A_{V,h}[ \phi_h; (\mathbf{u}_h, p_h), (\mathbf{v}_h, q_h)]= F_{h,\phi_h}(\mathbf{v}_h) \quad \forall \, (\mathbf{v}_h, q_h) \in \mathbf{V}_h \times Q_h, \\
		A_{T,h}(\mathbf{u}_h; \phi_h, \psi_h)=G_{h}(\psi_h) \quad \forall \, \psi_h \in \Sigma_h.
	\end{cases}
	\label{std_vem}
\end{align}

Since the equal-order virtual element approximations are employed for the study, the solution domain is significantly affected by non-physical oscillations due to the violation of discrete inf-sup condition. To address these issues, a stabilized form of problem \eqref{std_vem} is necessary.

\subsection{ Stabilized virtual element problem }
\label{sec:3.4}
We finally introduce fluctuation-controlling operators to control the fluctuations of the gradients of pressure and temperature, and to mitigate the violation of the divergence-free constraints. 
We define a fluctuation operator $\widehat{r}_h : L^2(E) \rightarrow L^2(E)$ such that $\widehat{r}_h := \Pi^{0,E}_{k} - \Pi^{0,E}_{k-1}$ with extension for vector field  $\widehat{\mathbf{r}}_h : [L^2(E)]^2 \rightarrow [L^2(E)]^2$, with $\widehat{\mathbf{r}}_h := \boldsymbol{\Pi}^{0,E}_k - \boldsymbol{\Pi}^{0,E}_{k-1}$. Employing the above fluctuation-controlling operators, we define the following stabilization terms:

\begin{itemize}
	\item The continuity equation is stabilized by the following stabilization term:
	\begin{align}
		\mathcal{L}_{1,h}(\mathbf{v}_h,\mathbf{w}_h) &:= \sum_{E \in \Omega_h}\tau_{1,E} \big[\big( \widehat{r}_h(\nabla \cdot \mathbf{v}_h), \widehat{r}_h(\nabla \cdot \mathbf{w}_h) \big)_E  + S^E_\nabla\big((\mathbf{I}-\boldsymbol{\Pi}^{\nabla,E}_k) \mathbf{v}_h,(\mathbf{I}-\boldsymbol{\Pi}^{\nabla,E}_k) \mathbf{w}_h \big)\big].  \label{div_1}
	\end{align}
	\item Pressure stabilizing term:
	\begin{align}
		\mathcal{L}_{2,h}(p_h,q_h) &:= \sum_{E \in \Omega_h}\tau_{2,E} \big[\big( \widehat{\mathbf{r}}_h(\nabla p_h), \widehat{\mathbf{r}}_h(\nabla q_h) \big)_E  + S^E_2\big((I-\Pi^{\nabla,E}_{k-1}) p_h, (I-\Pi^{\nabla,E}_{k-1}) q_h \big)\big].  \label{press_1}
	\end{align}
	Moreover, there exist two positive constants $\theta_{3\ast} \leq \theta_3^\ast$ independent of the mesh size such that there holds
	\begin{align}
		\theta_{3\ast} \ \|\nabla q_h\|_{0,E}^2 \leq S^E_2(q_h, q_h) \leq
		\theta_3^\ast  \|\nabla q_h\|_{0,E}^2 \qquad  \forall\, q_h \in Q_{h}(E) \cap ker({\Pi}^{\nabla,E}_{k-1}). \label{vem-c}
	\end{align}
	\item Stabilization term for transport equation for temperature:
	\begin{align}
		\mathcal{L}_{3,h}(\phi_h,\psi_h) &:= \sum_{E \in \Omega_h}\tau_{3,E} \big[\big( \widehat{\mathbf{r}}_h(\nabla \phi_h), \widehat{\mathbf{r}}_h(\nabla \psi_h) \big)_E  + S^E_1\big((I-\Pi^{\nabla,E}_{k}) \phi_h, (I-\Pi^{\nabla,E}_{k}) \psi_h \big)\big],  \label{trans_1}
	\end{align}
	where $\tau_{i,E}$ are stabilization parameters for $i=1, 2$ and $3$, {which depend on the mesh size and will be specified later.}
\end{itemize} 
Let $\mathcal{L}_{h}[(\mathbf{v}_h, p_h), (\mathbf{w}_h,q_h)]:= \mathcal{L}_{1,h}(\mathbf{v}_h,\mathbf{w}_h)+ \mathcal{L}_{2,h}(p_h, q_h)$. Therefore, the stabilized coupled virtual element problem for Stokes-Temperature equation $(S)$ is given as follows:
\begin{align}
	\begin{cases}
		\text{Find}\,\, (\mathbf{u}_h, p_h,\phi_h) \in \mathbf{V}_h \times Q_h \times \Sigma_h,\,\, \text{such that} \\
		A_{V,h}[ \phi_h; (\mathbf{u}_h, p_h), (\mathbf{v}_h, q_h)]+ \mathcal{L}_{h}[(\mathbf{u}_h, p_h), (\mathbf{v}_h,q_h)]= F_{h,\phi_h}(\mathbf{v}_h) \quad \forall \, (\mathbf{v}_h, q_h) \in \mathbf{V}_h \times Q_h, \\
		A_{T,h}(\mathbf{u}_h; \phi_h, \psi_h)+ 	\mathcal{L}_{3,h}(\phi_h,\psi_h)=G_{h}(\psi_h) \quad \forall \, \psi_h \in \Sigma_h.
	\end{cases}
	\label{nvem}
\end{align}
\begin{remark}
	From the stabilized problem \eqref{nvem}, it is noteworthy that the classical virtual element problem \eqref{std_vem} primarily requires the stabilization of the continuity equation to improve the violation of divergence-free constraints, the stabilization of pressure due to violation of discrete inf-sup condition, and the stabilization of transport equation to provide a reasonable control over the gradient of solution. While the choice of $\mathcal{L}_{3,h}$ can be relaxed for the dominated-diffusion regimes. Our study considers the stabilized problem \eqref{nvem}, demonstrating that the inclusion of $\mathcal{L}_{3,h}$ does not compromise the robustness of the proposed method in diffusion-dominated transport regimes.
\end{remark}

\section{Well-posedness} \label{sec:4}
This section is dedicated to demonstrating the well-posedness of the stabilized virtual element problem. To achieve this, we begin by defining the discrete fixed-point problem  which is equivalent to the problem \eqref{nvem}.
\subsection{Discrete fixed-point problem}
We define the discrete operator $\mathbf{S}_h: \Sigma_h \rightarrow \mathbf{V}_h \times Q_h$ such that
\begin{align}
	\phi_h \rightarrow \mathbf{S}_h(\phi_h)=\big( S^1_h(\phi_h), S^2_h(\phi_h)\big):=(\mathbf{u}_h,p_h), \label{dfix-1}
\end{align}
where $(\mathbf{u}_h,p_h) \in \mathbf{V}_h \times Q_h$ is the solution of the decoupled problem, given by
\begin{align}
	\begin{cases}
		\text{For given $\phi_h \in \Sigma_h$, find}\,\, (\mathbf{u}_h, p_h) \in \mathbf{V}_h \times Q_h,\,\, \text{such that} \\
		A_{V,h}[ \phi_h; (\mathbf{u}_h, p_h), (\mathbf{v}_h, q_h)]+ \mathcal{L}_{h}[(\mathbf{u}_h, p_h), (\mathbf{v}_h,q_h)]= F_{h,\phi_h}(\mathbf{v}_h) \quad \forall \, (\mathbf{v}_h, q_h) \in \mathbf{V}_h \times Q_h.
	\end{cases}
	\label{dd_couple-1}
\end{align}
Furthermore, we define the operator $\mathbb M_h: \mathbf{V}_h \rightarrow \Sigma_h$ such that
\begin{align}
	\mathbf{u}_h \rightarrow \mathbb M_h(\mathbf{u}_h)=\phi_h, \label{dfix-2}
\end{align}
where $\phi_h \in \Sigma_h$ represents the solution of the decoupled problem, defined by
\begin{align}
	\begin{cases}
		\text{For given $ \mathbf{u}_h \in \mathbf{V}_h$, find}\,\, \phi_h \in \Sigma_h,\,\, \text{such that} \\
		A_{T,h}(\mathbf{u}_h; \phi_h, \psi_h)+ 	\mathcal{L}_{3,h}(\phi_h,\psi_h)=G_{h}(\psi_h) \quad \forall \, \psi_h \in \Sigma_h.
	\end{cases}
	\label{dd_couple-2}
\end{align}
Following the continuous case, we define the operator $\mathbb T_h: \Sigma_h \rightarrow \Sigma_h$ such that
\begin{align}
	\phi_h \rightarrow \mathbb T_h (\phi_h)= \mathbb M_h(S^1_h(\phi_h)) \qquad \,\, \text{for all} \,\, \phi_h \in \Sigma_h.
\end{align}
Thus, the discrete fixed-point problem equivalent to the stabilized problem \eqref{nvem} can be given as follows:
\begin{align}
	\begin{cases}
		\text{ Find}\,\, \phi_h \in \Sigma_h,\,\, \text{such that} \\
		\mathbb T_h(\phi_h)=\phi_h.
	\end{cases}
	\label{ddfix}
\end{align}
\subsection{Well-posedness of the discrete decoupled problems}
We now define the mesh-dependent norms over $\mathbf{V}_h \times Q_h$ and $\Sigma_h$, which will be utilized in the subsequent analysis.
\begin{align}
	\vertiii{(\mathbf{v}_h,q_h)}^2_h&= \mu_{min} \|\nabla \mathbf{v}_h\|^2_{0,\Omega} + \|q_h\|^2_{0,\Omega} + \mathcal{L}_{h}[(\mathbf{v}_h,q_h),(\mathbf{v}_h,q_h)] \qquad \,\, \text{for all} \,\, (\mathbf{v}_h,q_h) \in \mathbf{V}_h \times Q_h,  \\
	\vertiii{\psi_h}^2_{\Sigma_h}&=  \kappa \|\nabla \psi_h\|^2_{0,\Omega} + \mathcal{L}_{3,h}(\psi_h,\psi_h) \qquad \,\, \text{for all} \,\, \psi_h \in \Sigma_h.
\end{align}

\begin{remark} \label{estimate}
	For all $E \in \Omega_h$, and for any $ \psi_h \in H^1(E)$, employing the orthogonality of the projectors, we can derive the following results:
	\begin{align}
		\|(\mathbf{I}- \boldsymbol{\Pi}^{0,E}_{k-1}) \nabla  \psi_h \|_{0,E} &\lesssim \| \nabla ({I}- {\Pi}^{\nabla,E}_{k})  \psi_h\|_{0,E} \lesssim \| \nabla ({I}- {\Pi}^{\nabla,E}_{k-1})  \psi_h\|_{0,E}, \\
		\|\nabla{\Pi}^{\nabla,E}_k  \psi_h \|_{0,E} & \lesssim \| \nabla  \psi_h\|_{0,E}.
	\end{align}
\end{remark} 
Hereafter, we will first establish the weak inf-sup condition for the discrete form $b_h$. In this sequel, we recall the following results:
\begin{lemma} (Inverse inequality \cite{vem28}) \label{inverse}
	Under the assumption \textbf{(A1)}, for any virtual function ${\psi}_h \in V_h(E)$ for all $E \in \Omega_{h}$, it yields that
	\begin{align}
		|{\psi}_h|_{1,E} \lesssim h^{-1}_E \|{\psi}_h\|_{0,E} \label{inverse1}.
	\end{align}
\end{lemma}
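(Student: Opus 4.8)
This is a standard stability property of $H^1$-conforming virtual element spaces, established in full in \cite{vem28}; here I only outline the argument I would follow. The plan is to reduce the estimate to a reference configuration of unit size and then to exploit the two defining properties of $V_h(E)$ --- that $\Delta\psi_h$ is a polynomial and that the edge traces of $\psi_h$ are polynomials --- together with classical polynomial inverse estimates.

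First I would rescale. Writing $\mathbf{x}=\mathbf{x}_E+h_E\widehat{\mathbf{x}}$ maps $E$ onto an element $\widehat{E}$ of unit diameter, and for $\widehat{\psi}_h(\widehat{\mathbf{x}}):=\psi_h(\mathbf{x})$ one has $|\psi_h|_{1,E}=|\widehat{\psi}_h|_{1,\widehat{E}}$ and $\|\psi_h\|_{0,E}=h_E\|\widehat{\psi}_h\|_{0,\widehat{E}}$, so \eqref{inverse1} is equivalent to $|\widehat{\psi}_h|_{1,\widehat{E}}\lesssim\|\widehat{\psi}_h\|_{0,\widehat{E}}$. Under \textbf{(A1)} the rescaled element $\widehat{E}$ is star-shaped with respect to a ball of radius $\geq\gamma_0$ and has edges of length $\geq\gamma_0$; in particular its perimeter is bounded in terms of $\gamma_0$ only, so $\widehat{E}$ has at most $N_0=N_0(\gamma_0)$ edges and $V_h(\widehat{E})$ is finite-dimensional with dimension controlled by $\gamma_0$ and $k$. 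On $\widehat{E}$ I would then integrate by parts,
\begin{align}
	|\widehat{\psi}_h|_{1,\widehat{E}}^2 = -\int_{\widehat{E}}\widehat{\psi}_h\,\Delta\widehat{\psi}_h + \int_{\partial\widehat{E}}\widehat{\psi}_h\,\partial_{\nn}\widehat{\psi}_h, \nonumber
\end{align}
and bound the two contributions separately: the volume term is estimated by $\|\widehat{\psi}_h\|_{0,\widehat{E}}\|\Delta\widehat{\psi}_h\|_{0,\widehat{E}}$ with $\Delta\widehat{\psi}_h\in\mathbb{P}_k(\widehat{E})$ by property $(i)$, while the boundary term is handled through property $(ii)$, namely $\widehat{\psi}_{h|e}\in\mathbb{P}_k(e)$ on each of the finitely many edges, together with trace inequalities and one-dimensional polynomial inverse estimates on the edges.

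The step I expect to be the main obstacle is controlling the \emph{interior} behaviour of the virtual function, since $\widehat{\psi}_h$ is known only implicitly: one must show $\|\Delta\widehat{\psi}_h\|_{0,\widehat{E}}\lesssim\|\widehat{\psi}_h\|_{0,\widehat{E}}$ and, in the same spirit, bound $\partial_{\nn}\widehat{\psi}_h$ on $\partial\widehat{E}$. The strategy is to split $\widehat{\psi}_h=\widehat{\psi}_h^{0}+\widehat{\psi}_h^{\partial}$, where $\widehat{\psi}_h^{0}\in H_0^1(\widehat{E})$ solves $-\Delta\widehat{\psi}_h^{0}=-\Delta\widehat{\psi}_h$ and $\widehat{\psi}_h^{\partial}$ is the harmonic function with the same (piecewise polynomial) trace as $\widehat{\psi}_h$; the two pieces are $H^1(\widehat{E})$-orthogonal. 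The harmonic part ranges in a finite-dimensional space of harmonic functions with polynomial edge traces, on which interior norms, boundary norms and normal derivatives are mutually equivalent with constants depending only on $\gamma_0$ and $k$; for the bubble part, $-\Delta\widehat{\psi}_h^{0}$ is a polynomial of degree $\leq k$ on a domain of unit size, and the Poisson solution operator restricted to $\mathbb{P}_k(\widehat{E})$ is invertible --- a Cauchy--Schwarz argument in the Dirichlet eigenbasis combined with polynomial inverse estimates yields $\|\Delta\widehat{\psi}_h\|_{0,\widehat{E}}\lesssim\|\widehat{\psi}_h^{0}\|_{0,\widehat{E}}$. Collecting these bounds, applying a Poincaré inequality on the star-shaped element and scaling back to $E$ gives \eqref{inverse1} with a constant depending only on $\gamma_0$ and $k$; since this book-keeping is carried out in detail in \cite{vem28}, we simply invoke that reference.
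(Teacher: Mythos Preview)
The paper does not actually prove this lemma: it is stated with a citation to \cite{vem28} and no proof environment follows. Your proposal therefore already goes beyond what the paper does, and your concluding sentence --- invoking \cite{vem28} for the details --- is exactly the paper's approach; the sketch you give (rescaling to unit size, integration by parts, splitting into a harmonic-extension part and a polynomial-Laplacian bubble part, then using finite-dimensionality and polynomial inverse estimates) is a standard and correct route to such VEM inverse inequalities.
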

\begin{lemma} \label{Beiga_1}
	Under the mesh regularity assumption \textbf{(A1)}, there exists a positive constant $\widehat{\beta}_0$ independent of the decomposition of $\Omega$, such that for all $k\geq 2$, there holds
	\begin{align}
		\sup \limits_{\mathbf{0} \neq \mathbf{w}_h \in \mathbf{V}_{h}} \dfrac{b_{h}(\mathbf{w}_h, q_h)}{\|\mathbf{w}_h\|_{1,\Omega}} \geq \widehat{\beta}_0 \|\Pi^{\nabla}_{k-1} q_h\|_{0,\Omega}  \qquad \forall \, q_h \in Q_h. \label{inf-sup}
	\end{align}
\end{lemma}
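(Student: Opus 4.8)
The plan is to reduce the discrete weak inf-sup estimate \eqref{inf-sup} to the classical continuous inf-sup condition of the Stokes problem by means of a Fortin-type operator onto $\mathbf{V}_h$, following the construction of \cite{vem28}. Fix $q_h\in Q_h$ and let $\widehat{q}_h\in L^2(\Omega)$ be the piecewise polynomial defined by $\widehat{q}_h|_E:=\Pi^{0,E}_{k-1}q_h$ on each $E\in\Omega_h$; since $q_h\in Q\subset L^2_0(\Omega)$ and $\int_E\Pi^{0,E}_{k-1}q_h\,\dx=\int_Eq_h\,\dx$, the function $\widehat{q}_h$ has vanishing mean over $\Omega$. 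By the surjectivity of $\vdiv:[H^1_0(\Omega)]^2\to L^2_0(\Omega)$ — equivalently, the continuous inf-sup condition for $b$, see \cite{mfem22} — there is $\mathbf{v}\in[H^1_0(\Omega)]^2$ with $\vdiv\mathbf{v}=\widehat{q}_h$ in $\Omega$ and $\|\mathbf{v}\|_{1,\Omega}\le C_\Omega\|\widehat{q}_h\|_{0,\Omega}$, where $C_\Omega$ depends only on $\Omega$.

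The core of the argument is the construction of a Fortin operator $\Pi^F_h:[H^1_0(\Omega)]^2\to\mathbf{V}_h$, built element by element through the degrees of freedom $\mathbf{DF_1}$--$\mathbf{DF_3}$, which satisfies (i) \emph{consistency of the divergence moments}, $\int_E\vdiv\big(\mathbf{v}-\Pi^F_h\mathbf{v}\big)\,p\,\dx=0$ for all $p\in\mathbb{P}_{k-1}(E)$ and all $E\in\Omega_h$, and (ii) \emph{$H^1$-stability}, $\|\Pi^F_h\mathbf{v}\|_{1,\Omega}\le C_F\|\mathbf{v}\|_{1,\Omega}$ with $C_F$ independent of $h$. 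One starts from an $H^1$-stable quasi-interpolant of $\mathbf{v}$ into $\mathbf{V}_h$ (e.g. of Scott--Zhang/Cl\'ement type) and corrects it: integrating by parts, $\int_E\vdiv(\mathbf{v}-\Pi^F_h\mathbf{v})\,p\,\dx=\int_{\partial E}\big((\mathbf{v}-\Pi^F_h\mathbf{v})\cdot\nn\big)p\,\ds-\int_E(\mathbf{v}-\Pi^F_h\mathbf{v})\cdot\nabla p\,\dx$, so the boundary degrees of freedom (edge by edge) are fixed to annihilate the first integral for every $p\in\mathbb{P}_{k-1}(E)$, and the internal moments $\mathbf{DF_3}$ of order up to $k-2$ — available precisely because $k\ge2$, and since $\nabla p\in[\mathbb{P}_{k-2}(E)]^2$ — to annihilate the second. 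Each correction is local and its $H^1$-size is controlled, via the mesh regularity \textbf{(A1)}, the virtual element inverse inequality of Lemma \ref{inverse}, and scaled trace and Poincar\'e inequalities on star-shaped elements; this is the only genuinely technical point of the proof. Summing the local bounds yields (ii).

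Finally, take $\mathbf{w}_h:=\Pi^F_h\mathbf{v}\in\mathbf{V}_h$. From the definition of $b_h$, the identity $\Pi^{0,E}_{k-1}\Pi^{0,E}_{k}=\Pi^{0,E}_{k-1}$ and the self-adjointness of $\Pi^{0,E}_{k-1}$, we get $b_h(\mathbf{w}_h,q_h)=\sum_{E\in\Omega_h}\int_E\vdiv\mathbf{w}_h\,\Pi^{0,E}_{k-1}q_h\,\dx$; applying (i) with $p=\Pi^{0,E}_{k-1}q_h$ and then $\vdiv\mathbf{v}=\widehat{q}_h$ gives $b_h(\mathbf{w}_h,q_h)=\sum_{E\in\Omega_h}\|\Pi^{0,E}_{k-1}q_h\|_{0,E}^2=\|\Pi^{0}_{k-1}q_h\|_{0,\Omega}^2$. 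Combining this with (ii) and $\|\mathbf{v}\|_{1,\Omega}\le C_\Omega\|\widehat{q}_h\|_{0,\Omega}=C_\Omega\|\Pi^{0}_{k-1}q_h\|_{0,\Omega}$,
\[
\sup_{\mathbf{0}\neq\mathbf{w}_h\in\mathbf{V}_h}\frac{b_h(\mathbf{w}_h,q_h)}{\|\mathbf{w}_h\|_{1,\Omega}}\ \ge\ \frac{b_h(\Pi^F_h\mathbf{v},q_h)}{\|\Pi^F_h\mathbf{v}\|_{1,\Omega}}\ \ge\ \frac{1}{C_FC_\Omega}\,\|\Pi^{0}_{k-1}q_h\|_{0,\Omega},
\]
which is the asserted estimate \eqref{inf-sup} with $\widehat{\beta}_0:=(C_FC_\Omega)^{-1}$, depending (through \textbf{(A1)}) only on the shape-regularity parameter $\gamma_0$ and on $\Omega$ — hence independent of the decomposition. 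The restriction $k\ge2$ enters only through the internal moments $\mathbf{DF_3}$ used to enforce (i); for $k=1$ a different, enriched velocity space would be required. I expect the verification of the uniform $H^1$-stability of $\Pi^F_h$ to be the main obstacle, the remaining steps being essentially bookkeeping.
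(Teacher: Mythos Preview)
The paper does not actually prove this lemma; it simply cites the external reference \cite{vem2}. Your Fortin-operator strategy is precisely the standard machinery behind such results, and the outline---continuous inf-sup to produce $\mathbf v$ with prescribed divergence, then a DoF-based interpolant into $\mathbf V_h$ that preserves the divergence moments against $\mathbb P_{k-1}(E)$ via the boundary DoFs and the internal moments $\mathbf{DF_3}$ (hence the restriction $k\ge2$), together with $H^1$-stability from mesh regularity and inverse/trace estimates---is correct.

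There is one discrepancy you should not gloss over. Your computation gives
\[
\sup_{\mathbf 0\neq\mathbf w_h\in\mathbf V_h}\frac{b_h(\mathbf w_h,q_h)}{\|\mathbf w_h\|_{1,\Omega}}\ \ge\ (C_FC_\Omega)^{-1}\,\|\Pi^{0}_{k-1}q_h\|_{0,\Omega},
\]
with the $L^2$-projection $\Pi^{0}_{k-1}$ on the right, whereas the lemma as stated carries the $H^1$-projection $\Pi^{\nabla}_{k-1}$; your closing sentence ``which is the asserted estimate \eqref{inf-sup}'' silently identifies the two. They are different operators. The gap is easy to close: since $\Pi^{0}_{k-1}$ is the best $L^2$-approximant in $\mathbb P_{k-1}(E)$ one has $\|(I-\Pi^{0}_{k-1})q_h\|_{0,E}\le\|(I-\Pi^{\nabla}_{k-1})q_h\|_{0,E}$, so by the triangle inequality and the local Poincar\'e inequality,
\[
\|\Pi^{\nabla}_{k-1}q_h\|_{0,E}\ \le\ \|\Pi^{0}_{k-1}q_h\|_{0,E}+2\|(I-\Pi^{\nabla}_{k-1})q_h\|_{0,E}\ \lesssim\ \|\Pi^{0}_{k-1}q_h\|_{0,E}+h_E\|\nabla(I-\Pi^{\nabla}_{k-1})q_h\|_{0,E}.
\]
The extra term is absorbed by $\mathcal L_{2,h}$ and is therefore harmless for the only downstream use of the lemma (the weak inf-sup, Lemma~\ref{infsup}); alternatively, you may simply record that the Fortin argument naturally produces the $\Pi^{0}_{k-1}$ version, which suffices for the subsequent analysis.
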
 
\begin{proof}
	The proof of Lemma \ref{Beiga_1} follows from \cite{vem2}. 
\end{proof}
\begin{lemma}(weak inf-sup condition) \label{infsup}
	Under the mesh regularity assumption \textbf{(A1)}, there exist positive constants $\widehat{\beta}_1$ and $\widehat{\beta}_2$ independent of the mesh size, satisfying
	\begin{align}
		\sup \limits_{\mathbf{0} \neq \mathbf{w}_h \in \mathbf{V}_h} \dfrac{b_{h}(\mathbf{w}_h, q_h)}{\|\mathbf{w}_h\|_{1,\Omega}} \geq \widehat{\beta}_1 \|q_h\|_{0,\Omega} - \widehat{\beta}_2 [\mathcal{L}_{2,h}(q_h,q_h)]^{\frac{1}{2}} \qquad \forall \, q_h \in Q_h.  \label{inf-sup1}
	\end{align}
\end{lemma}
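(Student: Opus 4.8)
The plan is to split $q_h$ into the component seen by the strong inf-sup estimate of Lemma~\ref{Beiga_1} and a fluctuation that the pressure stabilization $\mathcal{L}_{2,h}$ controls. Concretely, for $q_h \in Q_h$ write elementwise $q_h = \Pi^{\nabla}_{k-1}q_h + (I-\Pi^{\nabla}_{k-1})q_h$ and use the triangle inequality (interpreting the global norms as the elementwise $\ell^2$-sums)
\[
\|q_h\|_{0,\Omega} \le \|\Pi^{\nabla}_{k-1}q_h\|_{0,\Omega} + \|(I-\Pi^{\nabla}_{k-1})q_h\|_{0,\Omega}.
\]
For the first term, Lemma~\ref{Beiga_1} gives directly
\[
\|\Pi^{\nabla}_{k-1}q_h\|_{0,\Omega} \le \frac{1}{\widehat{\beta}_0}\,\sup_{\mathbf{0}\neq\mathbf{w}_h\in\mathbf{V}_h}\frac{b_h(\mathbf{w}_h,q_h)}{\|\mathbf{w}_h\|_{1,\Omega}}.
\]

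The second term is handled element by element. On each $E$, the error $(I-\Pi^{\nabla,E}_{k-1})q_h$ lies in $V_h(E)\cap ker(\Pi^{\nabla,E}_{k-1})$ (since $\mathbb{P}_{k-1}(E)\subseteq V_h(E)$ and $\Pi^{\nabla,E}_{k-1}$ is a projection) and, by the boundary normalization in the definition~\eqref{H1proj} of the $H^1$-projection, has vanishing integral over $\partial E$; hence, under the mesh regularity assumption \textbf{(A1)}, a scaled Poincar\'e inequality yields $\|(I-\Pi^{\nabla,E}_{k-1})q_h\|_{0,E}\lesssim h_E\,\|\nabla(I-\Pi^{\nabla,E}_{k-1})q_h\|_{0,E}$, with constant depending only on $\gamma_0$. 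The lower bound in~\eqref{vem-c} then converts the gradient into the stabilization, $\|\nabla(I-\Pi^{\nabla,E}_{k-1})q_h\|_{0,E}^2 \le \theta_{3\ast}^{-1}\,S^E_2\big((I-\Pi^{\nabla,E}_{k-1})q_h,(I-\Pi^{\nabla,E}_{k-1})q_h\big)$. Summing over $E$, using the definition~\eqref{press_1} of $\mathcal{L}_{2,h}$ and the nonnegativity of its two contributions,
\[
\|(I-\Pi^{\nabla}_{k-1})q_h\|_{0,\Omega}^2 \lesssim \frac{1}{\theta_{3\ast}}\Big(\max_{E\in\Omega_h}\frac{h_E^2}{\tau_{2,E}}\Big)\,\mathcal{L}_{2,h}(q_h,q_h),
\]
and once the stabilization parameter $\tau_{2,E}$ is fixed so that $h_E^2/\tau_{2,E}$ is uniformly bounded in $h$, the right-hand side is $\lesssim \mathcal{L}_{2,h}(q_h,q_h)$. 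Inserting the two bounds into the triangle inequality and renaming constants yields~\eqref{inf-sup1}, with $\widehat{\beta}_1,\widehat{\beta}_2$ depending only on $\widehat{\beta}_0$, $\theta_{3\ast}$, $\gamma_0$ and the Poincar\'e constant.

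The step I expect to be the main obstacle is this fluctuation estimate: matching the $O(h_E)$ factor from the Poincar\'e inequality against the weight $\tau_{2,E}$ in $\mathcal{L}_{2,h}$ so that the constant is mesh-independent — this is precisely what forces the scaling $\tau_{2,E}\sim h_E^2$ (up to the viscosity factor) to be fixed later — and verifying that the zero-boundary-mean property of the $H^1$-projection error is genuinely what makes the local Poincar\'e constant scale like $h_E$ uniformly over the mesh family. A minor point worth flagging in passing is that $\Pi^{\nabla}_{k-1}q_h$ need not inherit the zero-mean constraint of $Q=L_0^2(\Omega)$; this is harmless here, since only its $L^2$-norm enters the argument.
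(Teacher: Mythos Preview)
Your argument is clean and correct for $k\ge 2$, but it has a genuine gap for $k=1$: Lemma~\ref{Beiga_1}, on which your first step rests entirely, is stated (and proved in \cite{vem2}) only for $k\ge 2$. For $k=1$ the projection $\Pi^{\nabla}_0 q_h$ is elementwise constant, and the pair $(\mathbf{V}_h,\text{piecewise constants})$ is the polygonal analogue of the unstable $\mathbf{P}_1$--$P_0$ pair, so no estimate of the form $\sup_{\mathbf{w}_h} b_h(\mathbf{w}_h,q_h)/\|\mathbf{w}_h\|_{1,\Omega}\gtrsim\|\Pi^{\nabla}_0 q_h\|_{0,\Omega}$ is available in general. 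The paper's proof avoids this by starting instead from the \emph{continuous} inf-sup $(\nabla\cdot\mathbf{z},q_h)\ge C\|\mathbf{z}\|_{1,\Omega}\|q_h\|_{0,\Omega}$, passing to the virtual interpolant $\mathbf{z}_I$, and expanding $b_h(\mathbf{z}_I,q_h)$; for $k=1$ the residual term $h\|\nabla q_h\|_{0,\Omega}$ equals $h\|\nabla(I-\Pi^{\nabla}_0)q_h\|_{0,\Omega}$ and is absorbed directly into $\mathcal{L}_{2,h}^{1/2}$ without ever invoking Lemma~\ref{Beiga_1}. Only for $k\ge 2$ does the paper call on Lemma~\ref{Beiga_1}, and then merely to absorb an auxiliary term $\|\Pi^{\nabla}_{k-1}q_h\|_{0,\Omega}$ produced by the inverse inequality, not as the primary inf-sup.

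For $k\ge 2$ your route is in fact more economical than the paper's: you bypass the continuous inf-sup and the interpolation estimate for $\mathbf{z}-\mathbf{z}_I$ altogether, and your fluctuation bound uses only the $S^E_2$ contribution to $\mathcal{L}_{2,h}$, whereas the paper's estimate~\eqref{w6} goes through the full decomposition involving both the $\widehat{\mathbf{r}}_h$ and $S^E_2$ parts. The Poincar\'e step and the resulting constraint $\tau_{2,E}\sim h_E^2$ are identical to the paper's. To cover all $k\ge 1$ you need to supply the $k=1$ case separately, e.g.\ via the Fortin-type interpolation argument the paper uses.
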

\begin{proof}
	We will establish the proof of Theorem \ref{infsup} following \cite[Theorem 4.4]{vem028}. We recall for any $q_h \in Q_h$ there exists $\mathbf{z} \in \mathbf{V}$ such that
	\begin{align}
		\big( \nabla \cdot \mathbf{z}, q_h\big) \geq C \|\mathbf{z} \|_{1,\Omega} \|q_h\|_{0,\Omega} \qquad \text{for all} \,\, q_h \in Q_h. \label{weak-01}
	\end{align}	
	Let $\mathbf{z}_I \in \mathbf{V}_{h}$ denote the interpolant of $\mathbf{z} \in \mathbf{V}$. Therefore, we have the following estimates
	\begin{align}
		\|\mathbf{z} - \mathbf{z}_I\|_{0,\Omega} \leq C h \|\mathbf{z}\|_{1,\Omega} \qquad \text{and} \quad \,\, \|\mathbf{z}_I\|_{1,\Omega} \leq C \|\mathbf{z}\|_{1,\Omega}. \label{weak-11}
	\end{align}	
	Using the definition of projection operators and \eqref{weak-11}, we infer
	\begin{align}
		\sup \limits_{\mathbf{w}_h \in \mathbf{V}_h} \dfrac{b_h(\mathbf{w}_h, q_h)}{\|\mathbf{w}_h\|_{1,\Omega}} & = \sup \limits_{\mathbf{w}_h \in \mathbf{V}_h} \dfrac{ \sum_{E \in \Omega_h} \big( \Pi^{0,E}_{k-1} \nabla \cdot \mathbf{w}_h, \Pi^{0,E}_k q_h \big)}{\|\mathbf{w}_h\|_{1,\Omega}} \nonumber \\
		& \geq \dfrac{ \sum_{E \in \Omega_h} \big( \Pi^{0,E}_{k-1} \nabla \cdot \mathbf{z}_I, \Pi^{0,E}_k q_h \big)}{\|\mathbf{z}_I\|_{1,\Omega}} \nonumber \\
		& \geq \dfrac{ \sum_{E \in \Omega_h} \big(  \nabla \cdot \mathbf{z}_I, \Pi^{\nabla,E}_{k-1} q_h \big)}{C \|\mathbf{z}\|_{1,\Omega}}. \label{eq41}
	\end{align}	
	We now use the following settings: 
	\begin{align}
		\big( \nabla \cdot \mathbf{z}_I, \Pi^{\nabla,E}_{k-1} q_h \big) = -\big(\nabla \cdot \mathbf{z}_I, q_h -\Pi^{\nabla,E}_{k-1}q_h \big) - \big( \nabla \cdot (\mathbf{z} - \mathbf{z}_I), q_h \big) + \big(\nabla \cdot \mathbf{z}, q_h \big). \label{eq51}
	\end{align}
	Combining Eqn. \eqref{eq51} and the inequality \eqref{eq41}, we obtain 
	\begin{align}
		\sup \limits_{\mathbf{w}_h \in \mathbf{V}_h} \dfrac{b_h(\mathbf{w}_h, q_h)}{\|\mathbf{w}_h\|_{1,\Omega}} & \geq - \dfrac{| \sum_{E \in \Omega_h} \big(  \nabla \cdot \mathbf{z}_I, q_h -\Pi^{\nabla,E}_{k-1}q_h \big)|}{C \|\mathbf{z}\|_{1,\Omega}} - \dfrac{| \sum_{E \in \Omega_h} \big( \nabla \cdot (\mathbf{z} - \mathbf{z}_I), q_h \big)|}{C \|\mathbf{z}\|_{1,\Omega}} + \dfrac{ \big(  \nabla \cdot \mathbf{z},  q_h \big)}{C \|\mathbf{z}\|_{1,\Omega}}. \nonumber
	\end{align}
	Employing the integration by parts for the second term and \eqref{weak-01} for the last term, we have
	\begin{align}
		\sup \limits_{\mathbf{w}_h \in \mathbf{V}_h} \dfrac{b_h(\mathbf{w}_h, q_h)}{\|\mathbf{w}_h\|_{1,\Omega}} & \geq - C \Big[\| (I- \Pi^{\nabla}_{k-1}) q_h \|_{0,\Omega} + 
		h \| \nabla q_h \|_{0,\Omega}  \Big] + C \| q_h \|_{0,\Omega}.\label{weak-21} 
	\end{align}
	Using the Poincar$\acute{\text{e}}$ inequality \cite[Lemma 2.2]{vem28}, we infer
	\begin{align}
		\| (I- \Pi^{\nabla}_{k-1}) q_h \|^2_{0,\Omega} &= \sum_{E \in \Omega_h} \| (I- \Pi^{\nabla,E}_{k-1}) q_h \|^2_{0,E} \leq C \sum_{E \in \Omega_h} h^2_E \| \nabla(I- \Pi^{\nabla,E}_{k-1}) q_h \|^2_{0,E}. \label{w4}
	\end{align}
	
	\noindent For $k=1$, combining \eqref{weak-21} and \eqref{w4}, we conclude that
	\begin{align}
		\sup \limits_{\mathbf{w}_h \in \mathbf{V}_h} \dfrac{b_h(\mathbf{w}_h, q_h)}{\|\mathbf{w}_h\|_{1,\Omega}} & \geq - C  h \| \nabla (I- \Pi^{\nabla}_{0}) q_h \|_{0,\Omega}  + C \| q_h \|_{0,\Omega}. \label{weak-03}
	\end{align}
	Employing the triangle inequality and Lemma \ref{inverse}, we obtain
	\begin{align}
		h^2 \| \nabla q_h \|^2_{0,\Omega} &\leq C \sum_{E \in \Omega_h} \big[h^2 \| \nabla \Pi^{\nabla,E}_{k-1} q_h \|^2_{0,E} + 	h^2 \| \nabla(I-\Pi^{\nabla,E}_{k-1} )q_h\|^2_{0,E} \big] \nonumber \\
		&\leq C \big[ \| \Pi^{\nabla}_{k-1} q_h \|^2_{0,\Omega} + 	h^2 \| \nabla(I-\Pi^{\nabla}_{k-1} )q_h\|^2_{0,\Omega} \big]. \label{w5}
	\end{align}
	For $k \geq 2$, combining \eqref{weak-21}, \eqref{w4} and \eqref{w5}, we arrive at
	\begin{align}
		\sup \limits_{\mathbf{w}_h \in \mathbf{V}_h} \dfrac{b_h(\mathbf{w}_h, q_h)}{\|\mathbf{w}_h\|_{1,\Omega}} 
		&\geq - C \Big[ \|  \Pi^{\nabla}_{k-1} q_h \|_{0,\Omega} + h \| \nabla(I- \Pi^{\nabla}_{k-1}) q_h \|_{0,\Omega}  \Big] + C \| q_h \|_{0,\Omega}. \label{weak-22}
	\end{align}
	Using Lemma \ref{Beiga_1}, we obtain for $k \geq 2$
	\begin{align}
		\sup \limits_{\mathbf{w}_h \in \mathbf{V}_h} \dfrac{b_h(\mathbf{w}_h, q_h)}{\|\mathbf{w}_h\|_{1,\Omega}} & \geq - C  h \| \nabla (I- \Pi^{\nabla}_{k-1}) q_h \|_{0,\Omega}  + \widehat \beta_1 \| q_h \|_{0,\Omega}. \label{weak-04}
	\end{align}	
	Employing the triangle inequality, Remark \ref{estimate} and the result \eqref{vem-c}, there holds that
	\begin{align}
		h^2\| \nabla (I- \Pi^{\nabla}_{k-1}) q_h \|^2_{0,\Omega} & \leq	h^2 \sum_{E \in \Omega_h } \| \nabla (I- \Pi^{\nabla,E}_{k-1}) q_h \|^2_{0,E} \nonumber \\
		&\leq 4 h^2 \sum_{E \in \Omega_h} \Big[\|\nabla q_h- \boldsymbol{\Pi}^{0,E}_k \nabla q_h\|^2_{0,E} + \| \boldsymbol{\Pi}^{0,E}_k \nabla q_h - \boldsymbol{\Pi}^{0,E}_{k-1}\nabla q_h\|^2_{0,E} \,+ \nonumber \\ & \qquad \| \nabla q_h - \boldsymbol{\Pi}^{0,E}_{k-1}\nabla q_h\|^2_{0,E}  + \| \nabla q_h- \nabla \Pi^{\nabla,E}_{k-1} q_h\|^2_{0,E}\Big] \nonumber \\
		& \leq 4h^2 \sum_{E \in \Omega_h} \Big[ \|\boldsymbol{\Pi}^{0,E}_k \nabla q_h- \boldsymbol{\Pi}^{0,E}_{k-1} \nabla q_h\|^2_{0,E} + 3 \| \nabla q_h- \nabla \Pi^{\nabla,E}_{k-1} q_h\|^2_{0,E} \Big] \nonumber \\
		&\leq \frac{ 12 h^2}{ \min_{E \in \Omega_h}\tau_{2,E}} \sum_{E \in \Omega_h} \Big[ \tau_{2,E} \|\boldsymbol{\Pi}^{0,E}_k \nabla q_h- \boldsymbol{\Pi}^{0,E}_{k-1} \nabla q_h\|^2_{0,E}\, + \nonumber \\ & \qquad \frac{\tau_{2,E}}{\theta_{3\ast}} S^E_2 \big( (I-\Pi^{\nabla,E}_{k-1})q_h,(I-\Pi^{\nabla,E}_{k-1})q_h \big) \Big] \nonumber \\
		& \leq \frac{12 h^2}{ \min_{E \in \Omega_h}\tau_{2,E}} \max \Big\{1, \frac{1}{\theta_{3\ast}} \Big\} \mathcal{L}_{2,h}(q_h,q_h). \label{w6}
		\intertext{It is obvious that \eqref{w6} holds for all $k\geq 1$. Furthermore, to ensure that the constant on the right-hand side of \eqref{w6} remains independent of the mesh size, we assume {$\tau_{2,E} \sim h^2_E$.} Consequently, \eqref{w6} yields}
		h^2 \| \nabla (I- \Pi^{\nabla}_{k-1}) q_h \|^2_{0,\Omega} & \leq C \max \Big\{1, \frac{1}{\theta_{3\ast}} \Big\} \mathcal{L}_{2,h}(q_h,q_h), \label{w7}
	\end{align}
	thus,  the estimate \eqref{inf-sup1} readily follows from the estimates \eqref{weak-03}, \eqref{weak-04} and \eqref{w7}. 
\end{proof}
\begin{remark} \label{inf11}
	From \eqref{w6}, we conclude that Lemma \ref{infsup} holds when {$\tau_{2,E} \sim h^2_E$} for all $E \in \Omega_h$.
\end{remark}
\begin{lemma}{\big(well-posedness of \eqref{dd_couple-1}\big)} \label{wellposed-1}
	Under the mesh regularity assumption \textbf{(A1)}, for any $(\mathbf{u}_h, p_h) \in \mathbf{V}_h \times Q_h$ there exists a positive constant $\widetilde{\beta}$ that is independent of the mesh size $h$, such that for given $\phi_h \in \Sigma_h$, the following holds 
	\begin{align}
		\sup \limits_{(\mathbf{0},0) \neq (\mathbf{v}_h, q_h) \in \mathbf{V}_h \times Q_h} \dfrac{A_{V,h} [ \phi_h; (\mathbf{u}_h, p_h), (\mathbf{v}_h, q_h)] + \mathcal{L}_h[ (\mathbf{u}_h, p_h), (\mathbf{v}_h, q_h)]}{\vertiii{(\mathbf{v}_h, q_h)}_h}  \gtrsim \widetilde{\beta} \vertiii{(\mathbf{u}_h, p_h)}_h, \label{wellpos-0}
	\end{align}
	furthermore, problem \eqref{dd_couple-1} has a unique solution $(\mathbf{u}_h, p_h) \in \mathbf{V}_h \times Q_h$ such that it holds
	\begin{align}
		\vertiii{\mathbf{S}_h(\phi_h)}_h:=\vertiii{(\mathbf{u}_h,p_h)}_h \lesssim \frac{\alpha C_q^2(1+C_P)^2}{\widetilde{\beta}\sqrt{ \kappa \mu_{min}}} \|\mathbf{f}\|_{0, \Omega} \vertiii{ \phi_h}_{\Sigma_h}.  \label{wellpos-s0}
	\end{align}
\end{lemma}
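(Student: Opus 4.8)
The plan is to establish the inf-sup-type bound \eqref{wellpos-0} first, since it immediately yields both existence and uniqueness of the discrete solution (the problem \eqref{dd_couple-1} is linear in $(\mathbf{u}_h,p_h)$ for fixed $\phi_h$, so an inf-sup lower bound on the bilinear form plus a matching upper bound gives a square, injective, hence invertible operator) and, by testing against the solution itself, the a priori estimate \eqref{wellpos-s0}. Throughout I would abbreviate $\mathcal{B}_{\phi_h}[(\mathbf{u}_h,p_h),(\mathbf{v}_h,q_h)] := A_{V,h}[\phi_h;(\mathbf{u}_h,p_h),(\mathbf{v}_h,q_h)] + \mathcal{L}_h[(\mathbf{u}_h,p_h),(\mathbf{v}_h,q_h)]$.

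The core argument is a Fortin-type / Boffi--Brezzi perturbed-saddle-point construction. First I would record the building blocks: coercivity of $a_{V,h}(\phi_h;\cdot,\cdot)$ on all of $\mathbf{V}_h$ with constant $\gtrsim \mu_{min}$ and continuity with constant $\lesssim \mu_{max}$, which follow from \eqref{mu-reg}, the stability bounds \eqref{vem-a} on $S^E_\nabla$, the $L^2$-stability of the projectors (Remark \ref{stabilityP}) and Remark \ref{estimate}; likewise continuity of $b_h$ and of the stabilization forms $\mathcal{L}_{1,h},\mathcal{L}_{2,h}$ with respect to the triple-bar norm. Crucially, $\mathcal{L}_{1,h}(\mathbf{v}_h,\mathbf{v}_h)\gtrsim 0$ and, combined with $a_{V,h}$, it controls the full jump/fluctuation part of $\vertiii{(\mathbf{v}_h,q_h)}_h$ associated with the velocity. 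The standard two-step recipe then goes: (Step A) taking $(\mathbf{v}_h,q_h)=(\mathbf{u}_h,-p_h)$ makes the off-diagonal $b_h$ terms cancel and gives $\mathcal{B}_{\phi_h}[(\mathbf{u}_h,p_h),(\mathbf{u}_h,-p_h)] \gtrsim \mu_{min}\|\nabla\mathbf{u}_h\|_{0,\Omega}^2 + \mathcal{L}_{1,h}(\mathbf{u}_h,\mathbf{u}_h) + \mathcal{L}_{2,h}(p_h,p_h)$, i.e. control of everything except $\|p_h\|_{0,\Omega}^2$; (Step B) using the weak inf-sup condition of Lemma \ref{infsup}, pick $\mathbf{w}_h\in\mathbf{V}_h$ with $\|\mathbf{w}_h\|_{1,\Omega}\lesssim\|p_h\|_{0,\Omega}$ and $b_h(\mathbf{w}_h,p_h)\geq \widehat\beta_1\|p_h\|_{0,\Omega}^2 - \widehat\beta_2\|p_h\|_{0,\Omega}[\mathcal{L}_{2,h}(p_h,p_h)]^{1/2}$, and evaluate $\mathcal{B}_{\phi_h}$ at $(-\mathbf{w}_h,0)$; the term $-b_h(\mathbf{w}_h,p_h)$ then produces $+\widehat\beta_1\|p_h\|_{0,\Omega}^2$ up to a $\mathcal{L}_{2,h}(p_h,p_h)$ perturbation, while $a_{V,h}(\phi_h;\mathbf{u}_h,-\mathbf{w}_h)$ and $\mathcal{L}_{1,h}(\mathbf{u}_h,-\mathbf{w}_h)$ are absorbed by Young's inequality into the $\|\nabla\mathbf{u}_h\|^2$ and $\mathcal{L}_{1,h}(\mathbf{u}_h,\mathbf{u}_h)$ already controlled in Step A. Combining $\delta\cdot(\text{Step B test function}) + (\text{Step A test function})$ for a small fixed $\delta>0$ gives a test pair $(\mathbf{v}_h,q_h)$ with $\vertiii{(\mathbf{v}_h,q_h)}_h \lesssim \vertiii{(\mathbf{u}_h,p_h)}_h$ and $\mathcal{B}_{\phi_h}[(\mathbf{u}_h,p_h),(\mathbf{v}_h,q_h)] \gtrsim \vertiii{(\mathbf{u}_h,p_h)}_h^2$, which is exactly \eqref{wellpos-0} with $\widetilde\beta$ depending only on $\mu_{min},\mu_{max},\widehat\beta_1,\widehat\beta_2$ and the $\theta$-constants.

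For the a priori bound \eqref{wellpos-s0}, I would use \eqref{wellpos-0} with $(\mathbf{u}_h,p_h)$ the solution, so that $\widetilde\beta\,\vertiii{(\mathbf{u}_h,p_h)}_h \lesssim \sup_{(\mathbf{v}_h,q_h)} F_{h,\phi_h}(\mathbf{v}_h)/\vertiii{(\mathbf{v}_h,q_h)}_h$, then bound the right-hand side: $F_{h,\phi_h}(\mathbf{v}_h) = \sum_E(\alpha\mathbf{f}\,\Pi^{0,E}_k\phi_h,\boldsymbol{\Pi}^{0,E}_k\mathbf{v}_h)_E \lesssim \alpha\|\mathbf{f}\|_{0,\Omega}\,\|\Pi^{0}_k\phi_h\|_{0,4,\Omega}\,\|\boldsymbol{\Pi}^0_k\mathbf{v}_h\|_{0,4,\Omega}$ by Hölder, then use $L^4$-stability of the $L^2$-projection (Remark \ref{stabilityP}) and the Sobolev embedding $H^1\hookrightarrow L^4$ together with Poincaré to get $\lesssim \alpha C_q^2(1+C_P)^2\|\mathbf{f}\|_{0,\Omega}\|\nabla\phi_h\|_{0,\Omega}\|\nabla\mathbf{v}_h\|_{0,\Omega}$, and finally $\|\nabla\mathbf{v}_h\|_{0,\Omega}\le\mu_{min}^{-1/2}\vertiii{(\mathbf{v}_h,q_h)}_h$ and $\|\nabla\phi_h\|_{0,\Omega}=\kappa^{-1/2}\vertiii{\phi_h}_{\Sigma}\le\kappa^{-1/2}\vertiii{\phi_h}_{\Sigma_h}$, which assembles into \eqref{wellpos-s0}.

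The main obstacle I anticipate is Step B: the weak inf-sup estimate \eqref{inf-sup1} only controls $\|p_h\|_{0,\Omega}$ \emph{modulo} the square root of the pressure-stabilization energy $\mathcal{L}_{2,h}(p_h,p_h)$, so one must carefully track that the $\widehat\beta_2[\mathcal{L}_{2,h}(p_h,p_h)]^{1/2}$ defect is genuinely absorbed — this requires that $\mathcal{L}_{2,h}(p_h,p_h)$ be part of the norm $\vertiii{(\mathbf{u}_h,p_h)}_h$ already controlled by Step A, and then a Young's inequality with a small enough weight so the net coefficient of $\|p_h\|^2_{0,\Omega}$ stays positive. A secondary technical point is verifying that $a_{V,h}(\phi_h;\cdot,\cdot)$ is coercive on all of $\mathbf{V}_h$ (not merely inf-sup stable in the saddle sense) with a $\phi_h$-independent constant $\gtrsim\mu_{min}$: this is where the lower bounds $\mu_{min}\le\mu(\cdot)$ in \eqref{mu-reg}, the stabilization lower bound \eqref{vem-a}, and a Poincaré-type/norm-equivalence argument $\|\nabla\mathbf{v}_h\|_{0,E}^2 \lesssim \|\boldsymbol{\Pi}^{0,E}_{k-1}\varepsilon(\mathbf{v}_h)\|_{0,E}^2 + \|\nabla(\mathbf{I}-\boldsymbol{\Pi}^{\nabla,E}_k)\mathbf{v}_h\|_{0,E}^2$ (together with Korn's inequality on the boundary-value-problem space $\mathbf{V}$) all enter; I would state these as standard VEM facts with references to \cite{vem25,vem5} rather than reprove them.
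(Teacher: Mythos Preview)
Your overall strategy is exactly the paper's: coercivity of $a_{V,h}$ from the diagonal test, recovery of $\|p_h\|_{0,\Omega}$ from the weak inf-sup condition (Lemma~\ref{infsup}) via a test of the form $(-\mathbf{w}_h,0)$, a linear combination with a small parameter, then the norm comparison $\vertiii{(\mathbf{v}_h,q_h)}_h\lesssim\vertiii{(\mathbf{u}_h,p_h)}_h$; the a priori bound argument (H\"older, $L^4$-stability of $\Pi^0_k$, Sobolev embedding, Poincar\'e) is identical as well.

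There is, however, a sign slip in your Step~A that would break the argument as written. In this paper the coupling is skew-symmetric,
\[
A_{V,h}[\phi_h;(\mathbf{u}_h,p_h),(\mathbf{v}_h,q_h)]
=a_{V,h}(\phi_h;\mathbf{u}_h,\mathbf{v}_h)-b_h(\mathbf{v}_h,p_h)+b_h(\mathbf{u}_h,q_h),
\]
so the $b_h$ terms cancel when you test with $(\mathbf{u}_h,p_h)$, \emph{not} $(\mathbf{u}_h,-p_h)$. With your choice $(\mathbf{u}_h,-p_h)$ you would pick up an uncontrolled $-2b_h(\mathbf{u}_h,p_h)$, and moreover $\mathcal{L}_{2,h}(p_h,-p_h)=-\mathcal{L}_{2,h}(p_h,p_h)$, so neither the cancellation nor the claimed positive $\mathcal{L}_{2,h}$ contribution holds. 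Replacing $(\mathbf{u}_h,-p_h)$ by $(\mathbf{u}_h,p_h)$ throughout Step~A fixes both issues, and then your combined test $(\mathbf{u}_h,p_h)+\delta(-\mathbf{w}_h,0)$ is precisely the paper's $(\mathbf{u}_h,p_h)-\Theta(\mathbf{w}_h,0)$.
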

\begin{proof}
	We prove the estimate \eqref{wellpos-0} in the following steps: \newline
	$\bullet$ Using the estimates \eqref{vem-a}, the definition of projectors and Remark \ref{estimate}, we infer that
	\begin{align}
		A_{V,h}[ \phi_h; (\mathbf{u}_h, p_h), (\mathbf{u}_h, p_h)] &=a_{V,h}(\mathbf{u}_h,\mathbf{u}_h) \nonumber \\ 
		& \gtrsim \sum_{E \in \Omega_{h}} \min\{1, \theta_{1\ast}\} \mu_{min} \big[  \| \boldsymbol{\Pi}^{0,E}_{k-1} \varepsilon (\mathbf{u}_h) \|^{2}_{0,E}  +  \|\nabla (I-\boldsymbol{\Pi}^{\nabla,E}_k) \mathbf{u}_h\|^{2}_{0,E} \big]    \nonumber \\
		& \gtrsim \sum_{E \in \Omega_{h}} \min\{1, \theta_{1\ast}\} \mu_{min} \big[  \| \boldsymbol{\Pi}^{0,E}_{k-1} \varepsilon (\mathbf{u}_h) \|^{2}_{0,E}  +  \|\varepsilon( (I-\boldsymbol{\Pi}^{\nabla,E}_k) \mathbf{u}_h)\|^{2}_{0,E} \big]    \nonumber \\
		& \gtrsim \sum_{E \in \Omega_{h}} \min\{1, \theta_{1\ast}\} \mu_{min} \big[  \| \boldsymbol{\Pi}^{0,E}_{k-1} \varepsilon (\mathbf{u}_h) \|^{2}_{0,E}  +  \|( \mathbf{I}-\boldsymbol{\Pi}^{0,E}_{k-1}) \varepsilon(\mathbf{u}_h)\|^{2}_{0,E} \big]    \nonumber \\
		& \gtrsim \sum_{E \in \Omega_h } \min\{1, \theta_{1\ast}\} \mu_{min} \|\varepsilon (\mathbf{u}_h)\|^{2}_{0,E} \gtrsim \sum_{E \in \Omega_h } C_{\theta_\ast} \mu_{min} \|\nabla \mathbf{u}_h\|^{2}_{0,E}, \label{wellpos-1}
	\end{align}
	where $C_{\theta_\ast}=C_k\min\{1, \theta_\ast\}$, with $C_k>0$ is the constant from Korn's inequality. \newline
	$\bullet$ Lemma \ref{infsup} ensures that for each $q_h \in Q_h$,
	there exists $\widehat{\mathbf{w}}_h \in \mathbf{V}_{h}$ such that 
	\begin{align}
		b_{h}(\widehat{\mathbf{w}}_h, q_h) \geq \|\widehat{\mathbf{w}}_h\|_{1,\Omega} \big[\widehat{\beta}_1 \|q_h\|_{0,\Omega} - \widehat{\beta}_2 [\mathcal{L}_{2,h}(q_h,q_h)]^{\frac{1}{2}} \big]. \label{wellpos-2}
	\end{align}
	Taking $\mathbf{w}_h$ = $\frac{\|p_h\|_{0,\Omega}}{\|\nabla \widehat{\mathbf{w}}_h\|_{0,\Omega}} \widehat{\mathbf{w}}_h $, this gives $\|\nabla \mathbf{w}_h\|_{0,\Omega}= \|p_h\|_{0,\Omega}$. Then we have
	\begin{align}
		b_{h}(\mathbf{w}_h, p_h) \geq \|p_h\|_{0,\Omega} \big[ \widehat{\beta}_1 \|p_h\|_{0,\Omega} - \widehat{\beta}_2 [\mathcal{L}_{2,h}(p_h,p_h)]^{\frac{1}{2}} \big]. \label{wellpos-3}
	\end{align}
	We now use $(\mathbf{v}_h, q_h) = (-\mathbf{w}_h, 0)$, and employing the estimate \eqref{wellpos-3} and the Young inequality, we obtain
	\begin{align}
		A_{V,h}[ \phi_h;&(\mathbf{u}_h, p_h), (\mathbf{v}_h, q_h)] + \mathcal{L}_h[(\mathbf{u}_h, p_h), (\mathbf{v}_h,q_h)] =- a_{V,h}(\phi_h; \mathbf{u}_h,\mathbf{w}_h) + b_{h}(\mathbf{w}_h,p_h)  - \mathcal{L}_{1,h}(\mathbf{u}_h, \mathbf{w}_h)\nonumber  \nonumber\\
		& \gtrsim - \max \{1, \theta_1^\ast \} \mu_{max} \|\nabla \mathbf{u}_h\|_{0, \Omega} \|\nabla \mathbf{w}_h\|_{0, \Omega} + \|p_h\|_{0,\Omega} \big[\widehat{\beta}_1 \|p_h\|_{0,\Omega} - \widehat{\beta}_2 [\mathcal{L}_{2,h}(p_h,p_h)]^{\frac{1}{2}} \big] \nonumber \\ &\qquad -  \max \limits_{E\in\Omega_h} (1+\theta_1^\ast) \tau_{1,E} \|\nabla \mathbf{u}_h\|_{0, \Omega} \|\nabla \mathbf{w}_h\|_{0, \Omega}  \nonumber \\
		& \gtrsim - C_{\theta1^\ast}  \sqrt{\mu_{min}} \|\nabla \mathbf{u}_h\|_{0, \Omega} \|\nabla \mathbf{w}_h\|_{0, \Omega} +  \widehat{\beta}_1 \|p_h\|^2_{0,\Omega} - \widehat{\beta}_2 \|p_h\|_{0,\Omega} [\mathcal{L}_{2,h}(p_h,p_h)]^{\frac{1}{2}}  \nonumber \\ 
		& \gtrsim - C^2_{\theta1^\ast} t_1 \mu_{min} \|\nabla \mathbf{u}_h\|^2_{0, \Omega} - \dfrac{1}{8t_1} \| p_h\|^2_{0, \Omega} + \widehat{\beta}_1 \|p_h\|^2_{0,\Omega} - \dfrac{\widehat{\beta}_2^2}{t_2}  \|p_h\|^2_{0,\Omega} - \dfrac{t_2}{8} \mathcal{L}_{2,h}(p_h,p_h) \nonumber \\
		& \gtrsim - \frac{C^2_{\theta1^\ast}}{\widehat{\beta}_1} \mu_{min} \|\nabla \mathbf{u}_h\|^2_{0, \Omega} + \dfrac{\widehat{\beta}_1}{4} \|p_h\|^2_{0,\Omega} - \dfrac{\widehat{\beta}_2^2}{5\widehat{\beta}_1} \mathcal{L}_{2,h}(p_h,p_h), \label{wellpos-4}
	\end{align} 
	where $C_{\theta1^\ast}:= \frac{\max \{1, \theta_1^\ast \} \mu_{max} +\, \max_{E\in\Omega_h} (1+\theta_1^\ast) \tau_{1,E}}{\sqrt{\mu_{min}}}$, and the last line is obtained by employing $t_1= \frac{1}{\widehat{\beta}_1}$ and $t_2=\frac{8 \widehat{\beta}_2^2}{5 \widehat{\beta}_1}$.  \newline
	$\bullet$ Employing $(\mathbf{v}_h, q_h) =(\mathbf{u}_h, p_h) - \Theta (\mathbf{w}_h, 0)$ with a constant $\Theta >0$. Recalling \eqref{wellpos-1} and \eqref{wellpos-4}, we infer
	\begin{align}
		A_{V,h}[ \phi_h;&(\mathbf{u}_h, p_h), (\mathbf{v}_h, q_h)] + \mathcal{L}_h[(\mathbf{u}_h, p_h), (\mathbf{v}_h,q_h)] \nonumber \\
		&=A_{V,h}[ \phi_h;(\mathbf{u}_h, p_h), (\mathbf{u}_h, p_h)]- \Theta A_{V,h}[ \phi_h;(\mathbf{u}_h, p_h), (\mathbf{w}_h, 0)] \nonumber \\ & \qquad+ \mathcal{L}_h[(\mathbf{u}_h, p_h), (\mathbf{u}_h,p_h)] - \Theta \mathcal{L}_h[(\mathbf{u}_h, p_h), (\mathbf{w}_h,0)]\nonumber \\ 
		& \gtrsim \big[C_{\theta_\ast} -  \frac{ \Theta C^2_{\theta1^\ast}}{\widehat{\beta}_1}  \big] \mu_{min} \|\nabla \mathbf{u}_h\|^2_{0, \Omega} + \frac{\Theta \widehat{\beta}_1}{4} \|p_h\|^2_{0, \Omega}  + \mathcal{L}_{1,h}(\mathbf{u}_h, \mathbf{u}_h) + \big(1 - \frac{\Theta \widehat{\beta}^2_2}{5\widehat{\beta}_1}\big) \mathcal{L}_{2,h}(p_h,p_h).  \label{wellpos-5}
	\end{align}  
	Choosing $\Theta = \min \Big\{ \frac{3 C_{\theta_\ast} \widehat{\beta}_1}{4 C^2_{\theta1^\ast}},  \frac{ 15 \widehat{\beta}_1}{4 \widehat{\beta}^2_2} \Big\}$, then we obtain
	\begin{align}
		A_{V,h}&[ \phi_h;(\mathbf{u}_h, p_h), (\mathbf{v}_h, q_h)] + \mathcal{L}_h[(\mathbf{u}_h, p_h), (\mathbf{v}_h,q_h)] \nonumber \\
		& \gtrsim  \frac{C_{\theta_\ast}}{4} \mu_{min} \|\nabla \mathbf{u}_h\|^2_{0, \Omega} + \frac{1}{4} \min \Big\{ \frac{3 C_{\theta_\ast} \widehat{\beta}^2_1}{4 C^2_{\theta1^\ast}},  \frac{ 15 \widehat{\beta}_1^2}{4 \widehat{\beta}^2_2} \Big\} \|p_h\|^2_{0, \Omega} + \mathcal{L}_{1,h}(\mathbf{u}_h, \mathbf{u}_h) +  \frac{1}{4} \mathcal{L}_{2,h}(p_h,p_h) \nonumber \\
		& \gtrsim \Lambda_0 \vertiii{(\mathbf{u}_h, p_h)}_h^2, \label{wellpos-6} 		
	\end{align}
	where $\Lambda_0 = \min \Big\{ \frac{1}{4}, \frac{C_{\theta_\ast}}{4},  \frac{3 C_{\theta_\ast} \widehat{\beta}^2_1}{16 C^2_{\theta1^\ast}},  \frac{ 15 \widehat{\beta}_1^2}{16 \widehat{\beta}^2_2} \Big\}$. \newline
	$\bullet$ We prove that $\vertiii{(\mathbf{v}_h, q_h)}_h \lesssim \vertiii{(\mathbf{u}_h, p_h)}_h$. Concerning this, we use $\tau_0= (1+\theta_1^\ast)\max_{E\in\Omega_h} \tau_{1,E}$ and proceed as follows
	\begin{align}
		\vertiii{(\mathbf{v}_h, q_h)}_h^2 &= \mu_{min} \|\nabla (\mathbf{u}_h-\Theta \mathbf{w}_h)\|^2_{0, \Omega} + \|p_h\|^2 + \mathcal{L}_{1,h}((\mathbf{u}_h-\Theta \mathbf{w}_h), (\mathbf{u}_h-\Theta \mathbf{w}_h)) + \mathcal{L}_{2,h}(p_h,p_h) \nonumber \\
		& \lesssim  2\mu_{min} \|\nabla \mathbf{u}_h\|^2_{0, \Omega} + (2 \mu_{min} \Theta^2 + 1) \|p_h\|^2_{0, \Omega} +  \mathcal{L}_{1,h}(\mathbf{u}_h, \mathbf{u}_h) + \Theta^2 \mathcal{L}_{1,h}(\mathbf{w}_h, \mathbf{w}_h) \nonumber \\ & \qquad + 2 \Theta |\mathcal{L}_{1,h}(\mathbf{u}_h, \mathbf{w}_h)|+ \mathcal{L}_{2,h}(p_h,p_h) \nonumber \\
		& \lesssim  2\mu_{min} \|\nabla \mathbf{u}_h\|^2_{0, \Omega} + (2 \mu_{min} \Theta^2 + 1) \|p_h\|^2_{0, \Omega} + 2 \mathcal{L}_{1,h}(\mathbf{u}_h, \mathbf{u}_h) + 2\Theta^2  \mathcal{L}_{1,h}(\mathbf{w}_h, \mathbf{w}_h)  + \mathcal{L}_{2,h}(p_h,p_h) \nonumber \\
		& \lesssim  2\mu_{min} \|\nabla \mathbf{u}_h\|^2_{0, \Omega} + (2 \mu_{min} \Theta^2 + 2 \Theta^2 \tau_0 + 1) \|p_h\|^2_{0, \Omega} + 2 \mathcal{L}_{1,h}(\mathbf{u}_h, \mathbf{u}_h) + \mathcal{L}_{2,h}(p_h,p_h) \nonumber \\
		&  \lesssim  \max \{ 2, (2 \mu_{min} \Theta^2 + 2 \Theta^2 \tau_0 + 1)\} 
		\big[ \mu_{min} \|\nabla \mathbf{u}_h\|^2_{0, \Omega} + \|p_h\|^2_{0, \Omega} +  \mathcal{L}_{1,h}(\mathbf{u}_h, \mathbf{u}_h) + \mathcal{L}_{2,h}(p_h,p_h) \big] \nonumber \\
		& \lesssim \Lambda^2 \vertiii{(\mathbf{u}_h,p_h)}^2_h, \label{wellpos-7}
	\end{align}
	where $\Lambda := \sqrt{2 \mu_{min} \Theta^2 + 2 \Theta^2 \tau_0 + 2}$. Consequently, by merging \eqref{wellpos-6} and \eqref{wellpos-7}, we establish the validity of the estimate \eqref{wellpos-0} with $\widetilde{\beta}=\Lambda_0/\Lambda$. Moreover, using the continuity of the forms $A_{V,h}$ and $\mathcal{L}_h$, along with the estimate \eqref{wellpos-0}, guarantees the uniqueness of the solution to the problem  \eqref{dd_couple-1}. \newline
	$\bullet$ Additionally, employing \eqref{wellpos-0}, the Sobolev embedding theorem and Remark \ref{stabilityP}, we obtain
	\begin{align}
		\widetilde{\beta} \vertiii{(\mathbf{u}_h,p_h)}_h \vertiii{(\mathbf{v}_h,q_h)}_h &\lesssim   A_{V,h} [ \phi_h; (\mathbf{u}_h, p_h), (\mathbf{v}_h, q_h)] + \mathcal{L}_h[ (\mathbf{u}_h, p_h), (\mathbf{v}_h, q_h)]  \nonumber \\
		& \lesssim \alpha \sum_{E \in \Omega_h} \|\mathbf{f}\|_{0, E} \|\Pi_k^{0,E}\phi_h\|_{0,4,E} \| \boldsymbol{\Pi}^{0,E}_k\mathbf{v}_h\|_{0,4,E} \nonumber \\
		& \lesssim \alpha \sum_{E \in \Omega_h} \|\mathbf{f}\|_{0, E} \|\phi_h\|_{0,4,E} \| \mathbf{v}_h\|_{0,4,E} \nonumber \\
		&\lesssim \frac{\alpha C_q^2(1+C_P)^2}{\sqrt{\mu_{min}}} \|\mathbf{f}\|_{0, \Omega} \| \nabla \phi_h\|_{0,\Omega} \vertiii{(\mathbf{v}_h,q_h)}_h, 
	\end{align}
	the estimate \eqref{wellpos-s0} readily follows from the above analysis. 
\end{proof}

\begin{lemma}	\big(Well-posedness of \eqref{dd_couple-2}\big) \label{lm-ddM}
	Under the assumption \textbf{(A1)}, for any given $\widehat{\mathbf{u}}_h \in \mathbf{V}_h$,
	the discrete problem \eqref{dd_couple-2} has a unique solution $\widehat{\phi}_h \in \Sigma_h$ such that
	\begin{align}
		\vertiii{\mathbb M_h(\widehat{\mathbf{u}}_h)}_{\Sigma_h} :=\vertiii{\widehat{\phi}_h}_{\Sigma_h} \lesssim \frac{C_P} { C_\theta \sqrt{\kappa}} \|g\|_{0,\Omega}, \label{dd2-f0}
	\end{align}
	where $C_\theta:= \min\{1, \theta_{2\ast}\}$.
\end{lemma}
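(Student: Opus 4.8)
\emph{Strategy.} Fix $\mathbf{u}_h \in \mathbf{V}_h$ and set $B_h(\phi_h,\psi_h) := A_{T,h}(\mathbf{u}_h;\phi_h,\psi_h) + \mathcal{L}_{3,h}(\phi_h,\psi_h)$, a bilinear form on the finite-dimensional space $\Sigma_h$; problem \eqref{dd_couple-2} then reads: find $\widehat{\phi}_h \in \Sigma_h$ with $B_h(\widehat{\phi}_h,\psi_h) = G_h(\psi_h)$ for all $\psi_h \in \Sigma_h$, and I will apply the Lax--Milgram lemma on $\Sigma_h$ equipped with $\vertiii{\cdot}_{\Sigma_h}$. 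Boundedness of $B_h$ is routine: the diffusive, VEM-stabilization and fluctuation parts are controlled directly by $\vertiii{\phi_h}_{\Sigma_h}\vertiii{\psi_h}_{\Sigma_h}$ (up to the constants $\theta_2^\ast$ and $1$ from \eqref{vem-b}), while the convective part $c^S_{T,h}(\mathbf{u}_h;\phi_h,\psi_h)$ is estimated by the Sobolev embedding $H^1(\Omega)\hookrightarrow L^4(\Omega)$, H\"older's inequality (with exponents $4,2,4$), and the $L^q$-stability of $\Pi^{0,E}_k$ and $\boldsymbol{\Pi}^{0,E}_{k-1}$ (Remark \ref{stabilityP}), producing a constant depending only on the fixed datum $\|\mathbf{u}_h\|_{1,\Omega}$; boundedness of $G_h$ follows from the $L^2$-stability of $\Pi^{0,E}_k$.

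\emph{Coercivity.} Testing with $\psi_h=\phi_h$, the skew-symmetric convective term vanishes, $c^S_{T,h}(\mathbf{u}_h;\phi_h,\phi_h)=0$, so $B_h(\phi_h,\phi_h) = a_{T,h}(\phi_h,\phi_h) + \mathcal{L}_{3,h}(\phi_h,\phi_h)$. On each $E \in \Omega_h$ I split $\|\nabla\phi_h\|^2_{0,E} = \|\boldsymbol{\Pi}^{0,E}_{k-1}\nabla\phi_h\|^2_{0,E} + \|(\mathbf{I}-\boldsymbol{\Pi}^{0,E}_{k-1})\nabla\phi_h\|^2_{0,E}$ by $L^2$-orthogonality, bound the second summand by $\|\nabla(I-\Pi^{\nabla,E}_k)\phi_h\|^2_{0,E}$ via Remark \ref{estimate}, and invoke the lower bound of \eqref{vem-b} (legitimate since $(I-\Pi^{\nabla,E}_k)\phi_h$ lies in the kernel of $\Pi^{\nabla,E}_k$) to conclude $a^E_{T,h}(\phi_h,\phi_h) \gtrsim \kappa\min\{1,\theta_{2\ast}\}\|\nabla\phi_h\|^2_{0,E}$; this is the same chain used for $a_{V,h}$ in the proof of Lemma \ref{wellposed-1}. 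Summing over $E$, and adding back $\mathcal{L}_{3,h}(\phi_h,\phi_h)\ge 0$ together with $C_\theta=\min\{1,\theta_{2\ast}\}\le 1$, gives $B_h(\phi_h,\phi_h) \gtrsim C_\theta\big(\kappa\|\nabla\phi_h\|^2_{0,\Omega} + \mathcal{L}_{3,h}(\phi_h,\phi_h)\big) = C_\theta\vertiii{\phi_h}^2_{\Sigma_h}$. Lax--Milgram then yields the unique solvability of \eqref{dd_couple-2}.

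\emph{A priori bound.} Taking $\psi_h=\widehat{\phi}_h$ in \eqref{dd_couple-2} and using the coercivity estimate, $C_\theta\vertiii{\widehat{\phi}_h}^2_{\Sigma_h} \lesssim B_h(\widehat{\phi}_h,\widehat{\phi}_h) = G_h(\widehat{\phi}_h) = \sum_{E\in\Omega_h}(g,\Pi^{0,E}_k\widehat{\phi}_h)_E$. Bounding the right-hand side with $\|\Pi^{0,E}_k\widehat{\phi}_h\|_{0,E}\le\|\widehat{\phi}_h\|_{0,E}$ ($L^2$-stability of the projector), Cauchy--Schwarz over the elements, and the Poincar\'e inequality yields $C_\theta\vertiii{\widehat{\phi}_h}^2_{\Sigma_h} \lesssim C_P\|g\|_{0,\Omega}\|\nabla\widehat{\phi}_h\|_{0,\Omega} \le \frac{C_P}{\sqrt{\kappa}}\|g\|_{0,\Omega}\vertiii{\widehat{\phi}_h}_{\Sigma_h}$, since $\sqrt{\kappa}\|\nabla\widehat{\phi}_h\|_{0,\Omega}\le\vertiii{\widehat{\phi}_h}_{\Sigma_h}$; dividing by $\vertiii{\widehat{\phi}_h}_{\Sigma_h}$ produces \eqref{dd2-f0}.

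\emph{Main obstacle.} The only step that is not a direct transcription of standard VEM arguments is the coercivity of the discrete diffusion form $a_{T,h}$: the stabilization bound \eqref{vem-b} cannot be applied to $\nabla\phi_h$ itself (it holds only on the kernel of $\Pi^{\nabla,E}_k$), so recovering the full $\kappa\|\nabla\phi_h\|^2_{0,\Omega}$ from $\kappa\|\boldsymbol{\Pi}^{0,E}_{k-1}\nabla\phi_h\|^2_{0,E}$ plus the stabilization hinges on the bridging inequality of Remark \ref{estimate}. Once this is in place, the remaining estimates are routine.
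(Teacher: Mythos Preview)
Your proposal is correct and follows essentially the same route as the paper: exploit the skew-symmetry of $c^S_{T,h}$ so that coercivity reduces to $a_{T,h}+\mathcal{L}_{3,h}$, establish the lower bound on $a_{T,h}$ via the orthogonal splitting of $\nabla\phi_h$ combined with Remark~\ref{estimate} and \eqref{vem-b}, then apply Lax--Milgram and the Poincar\'e inequality to obtain \eqref{dd2-f0}. The paper's proof is organized identically (see \eqref{dd2-1}--\eqref{dd2-4c}), with continuity of $A_{T,h}$ and $\mathcal{L}_{3,h}$ merely asserted rather than spelled out as you do.
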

\begin{proof}
	For a given $\widehat{\mathbf{u}}_h \in \mathbf{V}_h$ and any $\psi_h \in \Sigma_h$, we investigate the coercivity of $A_{T,h} + \mathcal{L}_{3,h}$. Concerning this, we use the estimate \eqref{vem-b} and Remark \ref{estimate}:
	\begin{align}
		A_{T,h}(\widehat{\mathbf{u}}_h; \psi_h, \psi_h)&= a_{T,h}(\psi_h,\psi_h) + c^S_{T,h}(\widehat{\mathbf{u}}_h; \psi_h,\psi_h) \nonumber \\
		& \gtrsim  \sum_{E \in \Omega_h} \big[\kappa \|\boldsymbol{\Pi}^{0,E}_{k-1} \nabla \psi_h\|^2_{0,E} + \theta_{2\ast} \kappa \| \nabla (I - \Pi^{\nabla,E}_k) \psi_h\|_{0,E}^2 \big]  \nonumber \\
		& \gtrsim  \sum_{E \in \Omega_h} \kappa \min \{1, \theta_{2\ast} \} \big[ \|\boldsymbol{\Pi}^{0,E}_{k-1} \nabla \psi_h\|^2_{0,E} +  \| (\boldsymbol{I}-\boldsymbol{\Pi}^{0,E}_{k-1}) \nabla \psi_h\|_{0,E}^2 \big] \nonumber \\
		& \gtrsim   \kappa \min \{1, \theta_{2\ast} \}  \| \nabla \psi_h\|^2_{0,\Omega}. \label{dd2-1}
	\end{align}
	Combining \eqref{dd2-1} with $\mathcal{L}_{3,h}$, it gives
	\begin{align}
		A_{T,h}(\widehat{\mathbf{u}}_h; \psi_h, \psi_h) + \mathcal{L}_{3,h}(\psi_h,\psi_h) 
		& \gtrsim   C_\theta \vertiii{\psi_h}_{\Sigma_h}^2. \label{dd2-3} 
	\end{align}
	Thus, using the above analysis and the continuity of $A_{T,h}$ and $\mathcal{L}_{3,h}$, the uniqueness of the discrete solution to the problem \eqref{dd_couple-2} is obtained by employing the Lax-Milgram Lemma. Furthermore, it also satisfies
	\begin{align}
		C_\theta \vertiii{ \widehat{\phi}_h}_{\Sigma_h}^2 &\lesssim A_{T,h}(\widehat{\mathbf{u}}_h; \widehat{\phi}_h, \widehat{\phi}_h) + \mathcal{L}_{3,h}(\widehat{\phi}_h,\widehat{\phi}_h) = G_h(\widehat{\phi}_h)  \nonumber \\
		&\lesssim C_P \|g\|_{0, \Omega} \| \nabla \widehat{\phi}_h\|_{0,\Omega},  \label{dd2-4c}
	\end{align}
	which completes the proof of the result \eqref{dd2-f0}. 
\end{proof}

\subsection{Well-posedness of the stabilized coupled problem}
Hereafter, we establish the existence and uniqueness of the discrete solution to the stabilized virtual element problem \eqref{nvem}. To achieve this, we first derive the following results, which ensure that the assumptions of Brouwer’s fixed point theorem are satisfied. 

\begin{lemma} \label{Sh-diff}
	For any given ${\phi}_h, \widehat{{\phi}}_h \in \Sigma_h$, the following result holds:
	\begin{align}
		\vertiii{\mathbf{S}_h(\phi_h) - \mathbf{S}_h(\widehat{{\phi}}_h)}_h & \lesssim\dfrac{C_q(1+C_P)}{ \widetilde{\beta}\sqrt{ \kappa \mu_{min}}}\Big[ \alpha C_q(1+C_P)\|\mathbf{f}\|_{0, \Omega}  +   L_\mu C_{\mathbf{S}, \widehat{\mathbf{u}}_h} \,+   \theta_1^\ast  L_\mu \|\nabla \widehat{\mathbf{u}}_h \|_{0,3,\Omega} \Big] \vertiii{\phi_h - \widehat{{\phi}}_h}_{\Sigma_{h}}.		
		\label{shdiff}
	\end{align}
\end{lemma}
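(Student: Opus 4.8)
The plan is to transcribe the continuous argument of Lemma~\ref{s-diff}, replacing the inf-sup bound of Remark~\ref{cinfsup} by the discrete one of Lemma~\ref{wellposed-1} and inserting the $L^q$-stability of the projectors (Remark~\ref{stabilityP}) wherever a projector must be removed. Write $\mathbf{S}_h(\phi_h)=(S^1_h(\phi_h),S^2_h(\phi_h))=:(\mathbf{u}_h,p_h)$ and $\mathbf{S}_h(\widehat{\phi}_h)=(S^1_h(\widehat{\phi}_h),S^2_h(\widehat{\phi}_h))=:(\widehat{\mathbf{u}}_h,\widehat{p}_h)$. Subtracting the two instances of \eqref{dd_couple-1} and using the identity $F_{h,\widehat{\phi}_h}(\mathbf{v}_h)=A_{V,h}[\widehat{\phi}_h;(\widehat{\mathbf{u}}_h,\widehat{p}_h),(\mathbf{v}_h,q_h)]+\mathcal{L}_h[(\widehat{\mathbf{u}}_h,\widehat{p}_h),(\mathbf{v}_h,q_h)]$ to add and subtract the ``frozen-viscosity'' term, one obtains, for every $(\mathbf{v}_h,q_h)\in\mathbf{V}_h\times Q_h$,
\[
A_{V,h}[\phi_h;(\mathbf{u}_h-\widehat{\mathbf{u}}_h,p_h-\widehat{p}_h),(\mathbf{v}_h,q_h)]+\mathcal{L}_h[(\mathbf{u}_h-\widehat{\mathbf{u}}_h,p_h-\widehat{p}_h),(\mathbf{v}_h,q_h)]=A_{V,h,1}+A_{V,h,2},
\]
with $A_{V,h,1}:=F_{h,\phi_h}(\mathbf{v}_h)-F_{h,\widehat{\phi}_h}(\mathbf{v}_h)$ and $A_{V,h,2}:=a_{V,h}(\widehat{\phi}_h;\widehat{\mathbf{u}}_h,\mathbf{v}_h)-a_{V,h}(\phi_h;\widehat{\mathbf{u}}_h,\mathbf{v}_h)$. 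This is the exact discrete analogue of the splitting \eqref{Sdiff-1}; crucially $\mathcal{L}_h$, being bilinear and independent of the temperature slot, stays attached to the velocity--pressure difference on the left, so the left-hand side is precisely the quantity controlled from below by $\widetilde{\beta}\vertiii{(\mathbf{u}_h-\widehat{\mathbf{u}}_h,p_h-\widehat{p}_h)}_h$ via \eqref{wellpos-0}.

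For $A_{V,h,1}=\sum_{E\in\Omega_h}(\alpha\mathbf{f}\,\Pi^{0,E}_k(\phi_h-\widehat{\phi}_h),\boldsymbol{\Pi}^{0,E}_k\mathbf{v}_h)_E$ I would use H\"older with exponents $(2,4,4)$, strip the two $L^2$-projectors by their $L^4$-stability (Remark~\ref{stabilityP}), and then apply $H^1(\Omega)\hookrightarrow L^4(\Omega)$ together with Poincar\'e, obtaining $|A_{V,h,1}|\lesssim\alpha C_q^2(1+C_P)^2\|\mathbf{f}\|_{0,\Omega}\,\|\nabla(\phi_h-\widehat{\phi}_h)\|_{0,\Omega}\,\|\nabla\mathbf{v}_h\|_{0,\Omega}$; using $\|\nabla\mathbf{v}_h\|_{0,\Omega}\le\mu_{min}^{-1/2}\vertiii{(\mathbf{v}_h,q_h)}_h$ this reproduces the $\alpha C_q(1+C_P)\|\mathbf{f}\|_{0,\Omega}$ term of \eqref{shdiff}.

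For $A_{V,h,2}$ I would split each local form $a^E_{V,h}$ into its polynomial-consistency part and its $S^E_\nabla$-stabilization part, so that $A_{V,h,2}=\sum_{E}\int_E(\mu(\Pi^{0,E}_k\widehat{\phi}_h)-\mu(\Pi^{0,E}_k\phi_h))\,\boldsymbol{\Pi}^{0,E}_{k-1}\varepsilon(\widehat{\mathbf{u}}_h):\boldsymbol{\Pi}^{0,E}_{k-1}\varepsilon(\mathbf{v}_h)\,dE+\sum_{E}(\mu(\Pi^{0,E}_0\widehat{\phi}_h)-\mu(\Pi^{0,E}_0\phi_h))\,S^E_\nabla((\mathbf{I}-\boldsymbol{\Pi}^{\nabla,E}_k)\widehat{\mathbf{u}}_h,(\mathbf{I}-\boldsymbol{\Pi}^{\nabla,E}_k)\mathbf{v}_h)$. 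On the consistency sum use the Lipschitz bound \eqref{mu-reg}, H\"older $(6,3,2)$, the $L^q$-stability of $\Pi^{0,E}_k$ and $\boldsymbol{\Pi}^{0,E}_{k-1}$, then $H^1\hookrightarrow L^6$, Poincar\'e and Korn; on the stabilization sum use \eqref{mu-reg}, the Cauchy--Schwarz upper bound in \eqref{vem-a} (which is where the factor $\theta_1^\ast$ enters) and $\|\nabla(\mathbf{I}-\boldsymbol{\Pi}^{\nabla,E}_k)\mathbf{v}_h\|_{0,E}\le\|\nabla\mathbf{v}_h\|_{0,E}$. Collecting the $\widehat{\mathbf{u}}_h$-dependent projection-seminorm factors that cannot be traded for $\|\nabla\widehat{\mathbf{u}}_h\|_{0,3,\Omega}$ in a mesh-independent way into the constant $C_{\mathbf{S}, \widehat{\mathbf{u}}_h}$ (defined in the course of the estimate) yields $|A_{V,h,2}|\lesssim\mu_{min}^{-1/2}(1+C_P)C_q L_\mu\big(C_{\mathbf{S}, \widehat{\mathbf{u}}_h}+\theta_1^\ast\|\nabla\widehat{\mathbf{u}}_h\|_{0,3,\Omega}\big)\|\nabla(\phi_h-\widehat{\phi}_h)\|_{0,\Omega}\,\vertiii{(\mathbf{v}_h,q_h)}_h$. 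Inserting the two estimates into \eqref{wellpos-0}, taking the supremum over $(\mathbf{v}_h,q_h)$, dividing by $\widetilde{\beta}$ and converting $\|\nabla(\phi_h-\widehat{\phi}_h)\|_{0,\Omega}$ into $\kappa^{-1/2}\vertiii{\phi_h-\widehat{\phi}_h}_{\Sigma_h}$ then gives \eqref{shdiff}.

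The step I expect to be the main obstacle is the control of the $S^E_\nabla$-stabilization contribution to $A_{V,h,2}$: there $\mu$ is evaluated at the cell average $\Pi^{0,E}_0\phi_h$, so one must bound $|\mu(\Pi^{0,E}_0\widehat{\phi}_h)-\mu(\Pi^{0,E}_0\phi_h)|\le L_\mu|\Pi^{0,E}_0(\widehat{\phi}_h-\phi_h)|$ together with the seminorm $\|\nabla(\mathbf{I}-\boldsymbol{\Pi}^{\nabla,E}_k)\widehat{\mathbf{u}}_h\|_{0,E}$ uniformly in the mesh size, and, since a generic virtual field lies only in $[H^1(E)]^2$ elementwise, $\boldsymbol{\Pi}^{0,E}_{k-1}\varepsilon(\widehat{\mathbf{u}}_h)$ cannot be replaced by $\varepsilon(\widehat{\mathbf{u}}_h)$ in $L^3(E)$ without paying an inverse power of $h_E$. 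This is exactly why \eqref{shdiff} carries the auxiliary, $\widehat{\mathbf{u}}_h$-dependent constant $C_{\mathbf{S}, \widehat{\mathbf{u}}_h}$ next to the mesh-robust term $\theta_1^\ast L_\mu\|\nabla\widehat{\mathbf{u}}_h\|_{0,3,\Omega}$; the remainder is a routine transcription of the proof of Lemma~\ref{s-diff}.
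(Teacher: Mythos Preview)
Your proposal is correct and follows essentially the same route as the paper's proof. For the obstacle you flag on the $S^E_\nabla$-contribution, the paper resolves it by a power-of-$h_E$ cancellation: since $\Pi^{0,E}_0(\widehat{\phi}_h-\phi_h)$ is a constant on $E$, one has $|\Pi^{0,E}_0(\widehat{\phi}_h-\phi_h)|\lesssim h_E^{-1/3}\|\widehat{\phi}_h-\phi_h\|_{0,6,E}$ (inverse inequality \eqref{inverse2} combined with the $L^6$-stability of $\Pi^{0,E}_0$ from Remark~\ref{stabilityP}), while $\|\nabla(\mathbf{I}-\boldsymbol{\Pi}^{\nabla,E}_k)\widehat{\mathbf{u}}_h\|_{0,E}\le\|\nabla\widehat{\mathbf{u}}_h\|_{0,E}\lesssim h_E^{1/3}\|\nabla\widehat{\mathbf{u}}_h\|_{0,3,E}$ by H\"older; the $h_E^{\pm 1/3}$ factors cancel and the discrete H\"older sum over $E$ yields the mesh-robust term $\theta_1^\ast L_\mu\|\nabla\widehat{\mathbf{u}}_h\|_{0,3,\Omega}$. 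The constant $C_{\mathbf{S},\widehat{\mathbf{u}}_h}$ is defined in the paper precisely as $\|\boldsymbol{\Pi}^0_{k-1}\nabla\widehat{\mathbf{u}}_h\|_{0,3,\Omega}$, arising from the consistency sum exactly as you outline.
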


\begin{proof}
	For given $\phi_h, \widehat{{\phi}}_h \in \Sigma_h$, we define  $\mathbf{S}_h(\phi_h):=(\mathbf{u}_h,p_h)$ and $\mathbf{S}_h(\widehat{{\phi}}_h):= (\widehat{\mathbf{u}}_h, \widehat{{p}}_h)$. For any $(\mathbf{v}_h, q_h) \in \mathbf{V}_h \times Q_h$, we have
	\begin{align}
		A_{V,h}[\phi_h;& (\mathbf{u}_h,p_h)- (\widehat{\mathbf{u}}_h, \widehat{{p}}_h), (\mathbf{v}_h, q_h)] + \mathcal{L}_{h}[ (\mathbf{u}_h,p_h)- (\widehat{\mathbf{u}}_h, \widehat{{p}}_h), (\mathbf{v}_h, q_h)] \nonumber \\
		&= F_{h,\phi_h}(\mathbf{v}_h) - F_{h,\widehat{\phi}_h}(\mathbf{v}_h) + A_{V,h}[\widehat{{\phi}}_h; (\widehat{\mathbf{u}}_h, \widehat{{p}}_h), (\mathbf{v}_h, q_h)]- A_{V,h}[\phi_h; (\widehat{\mathbf{u}}_h, \widehat{{p}}_h), (\mathbf{v}_h, q_h)] \nonumber \\
		&=: A_{V,h,1} + A_{V,h,2}. \label{Shdiff-1}  		
	\end{align}
	Concerning the first term of \eqref{Shdiff-1}, we proceed as follows
	\begin{align}
		A_{V,h,1} &= \sum_{E \in \Omega_h} \big[ \big(\alpha \mathbf{f} \Pi^{0,E}_k \phi_h, \boldsymbol{\Pi}^{0,E}_k \mathbf{v}_h\big) - \big(\alpha \mathbf{f} \Pi^{0,E}_k \widehat{\phi}_h, \boldsymbol{\Pi}^{0,E}_k \mathbf{v}_h\big) \big] = \sum_{E \in \Omega_h} \big(\alpha \mathbf{f} \Pi^{0,E}_k (\phi_h - \widehat{\phi}_h), \boldsymbol{\Pi}^{0,E}_k \mathbf{v}_h\big) \nonumber \\
		& \lesssim  \sum_{E \in \Omega_h}  \alpha \|\mathbf{f}\|_{0,E} \| \Pi^{0,E}_k(\phi_h - \widehat{{\phi}}_h )\|_{0,4,E} \|\boldsymbol{\Pi}^{0,E}_k\mathbf{v}_h\|_{0,4,E} \nonumber \\
		& \lesssim  \dfrac{ \alpha C_q^2(1+C_P)^2}{\sqrt{\mu_{min}}} \|\mathbf{f}\|_{0, \Omega} \| \nabla(\phi_h - \widehat{{\phi}}_h )\|_{0,\Omega}  \vertiii{(\mathbf{v}_h, q_h)}_h. 
		\label{Shdiff-2}
	\end{align} 
	And 
	\begin{align}
		A_{V,h,2} &= a_{V,h}(\widehat{{\phi}}_h; \widehat{\mathbf{u}}_h, \mathbf{v}_h) - a_{V,h}(\phi_h; \widehat{\mathbf{u}}_h, \mathbf{v}_h) \nonumber \\
		&= \sum_{E \in \Omega_h} \Big[\big([ \mu( \Pi^{0,E}_k\widehat{{\phi}}_h) - \mu( \Pi^{0,E}_k \phi_h)] \boldsymbol{\Pi}^{0,E}_{k-1}\nabla \widehat{\mathbf{u}}_h, \boldsymbol{\Pi}^{0,E}_{k-1}\nabla \mathbf{v}_h \big) \nonumber \\ & \qquad + \big[ \mu( \Pi^{0,E}_0 \widehat{{\phi}}_h) - \mu( \Pi^{0,E}_0 \phi_h) \big] S^E_ \nabla \big((\mathbf{I}-\boldsymbol{\Pi}^{\nabla,E}_k) \mathbf{\widehat{\mathbf{u}}}_h,(\mathbf{I}-\boldsymbol{\Pi}^{\nabla,E}_k) \mathbf{v}_h \big)  \Big] \nonumber \\
		&=: a_{v,1} + a_{v,2}. \label{Shdiff-3}
	\end{align}
	We use the Lipschitz continuity of $\mu$, the Sobolev embeddings $H^1(E) \hookrightarrow L^6(E)$ and Remark \ref{stabilityP}:
	\begin{align}
		|a_{v,1}| &\lesssim \sum_{E \in \Omega_h} \| [\mu( \Pi^{0,E}_k\widehat{{\phi}}_h) - \mu( \Pi^{0,E}_k \phi_h)] \boldsymbol{\Pi}^{0,E}_{k-1}\nabla \widehat{\mathbf{u}}_h \|_{0,E} \| \boldsymbol{\Pi}^{0,E}_{k-1}\nabla \widehat{\mathbf{v}}_h\|_{0,E} \nonumber \\
		& \lesssim \sum_{E \in \Omega_h} L_\mu \|  \Pi^{0,E}_k(\widehat{{\phi}}_h -  \phi_h)\|_{0,6,E} \| \boldsymbol{\Pi}^{0,E}_{k-1}\nabla \widehat{\mathbf{u}}_h \|_{0,3,E} \|\nabla \mathbf{v}_h\|_{0,E} \nonumber \\
		& \lesssim \sum_{E \in \Omega_h} L_\mu \| \widehat{{\phi}}_h -  \phi_h\|_{0,6,E} \| \boldsymbol{\Pi}^{0,E}_{k-1} \nabla \widehat{\mathbf{u}}_h \|_{0,3,E} \|\nabla \mathbf{v}_h\|_{0,E} \nonumber \\  
		& \lesssim L_\mu \| \widehat{{\phi}}_h -  \phi_h\|_{0,6,\Omega} \| \boldsymbol{\Pi}^{0}_{k-1} \nabla \widehat{\mathbf{u}}_h \|_{0,3,\Omega}\|\nabla \mathbf{v}_h\|_{0,\Omega} \nonumber \\
		& \lesssim L_\mu C_q(1+C_P) C_{\mathbf{S}, \widehat{\mathbf{u}}_h} \| \nabla(\widehat{{\phi}}_h -  \phi_h)\|_{0,\Omega} \|\nabla \mathbf{v}_h\|_{0,\Omega}, \label{Shdiff-4}
	\end{align}
	where $C_{\mathbf{S}, \widehat{\mathbf{u}}_h}:= \| \boldsymbol{\Pi}^{0}_{k-1} \nabla \widehat{\mathbf{u}}_h \|_{0,3,\Omega}=\big(\sum_{E \in \Omega_h} \| \boldsymbol{\Pi}^{0,E}_{k-1} \nabla \widehat{\mathbf{u}}_h \|^3_{0,3,E}\big)^{1/3}$ is finite, as $\boldsymbol{\Pi}^{0,E}_{k-1} \nabla \widehat{\mathbf{u}}_h$ is a tensor polynomial. 
	\newline
	{	Employing the estimate \eqref{vem-a}, the inequality \eqref{inverse2} and Bramble--Hilbert Lemma \cite[ Lemma 4.3.8]{scott}, we obtain
		\begin{align}
			|a_{v,2}| &\lesssim \sum_{E \in \Omega_h } \theta_1^\ast L_\mu |  \Pi^{0,E}_0(\widehat{{\phi}}_h -  \phi_h)| \|\nabla(\mathbf{I} -  \boldsymbol{\Pi}^{\nabla,E}_{k}) \widehat{\mathbf{u}}_h \|_{0,E} \|\nabla (\mathbf{I} -  \boldsymbol{\Pi}^{\nabla,E}_{k})\mathbf{v}_h\|_{0,E} \nonumber \\
			&\lesssim \sum_{E \in \Omega_h } \theta_1^\ast L_\mu  h^{-1/3}_E \| \widehat{{\phi}}_h -  \phi_h\|_{0,6,E} \,h^{1/3}_E\|\nabla \widehat{\mathbf{u}}_h \|_{0,3,E} \|\nabla \mathbf{v}_h\|_{0,E} \nonumber \\
			&\lesssim \sum_{E \in \Omega_h }  \theta_1^\ast L_\mu \|(\widehat{{\phi}}_h -  \phi_h)\|_{0,6,E} \|\nabla \widehat{\mathbf{u}}_h \|_{0,3,E}  \|\nabla \mathbf{v}_h\|_{0,E} \nonumber \\
			& \lesssim \theta_1^\ast  L_\mu C_q(1+C_P) \|\nabla \widehat{\mathbf{u}}_h \|_{0,3,\Omega} \| \nabla(\widehat{{\phi}}_h -  \phi_h)\|_{0,\Omega} \|\nabla \mathbf{v}_h\|_{0,\Omega}. \label{Shdiff-5}
	\end{align}}
Substituting the estimates \eqref{Shdiff-4} and \eqref{Shdiff-5} in \eqref{Shdiff-3}, it gives
\begin{align}
	A_{V,h,2} \lesssim \dfrac{L_\mu C_q(1+C_P)}{\sqrt{\kappa \mu_{min}}} \big[  C_{\mathbf{S}, \widehat{\mathbf{u}}_h} +   \theta_1^\ast \|\nabla \widehat{\mathbf{u}}_h \|_{0,3,\Omega} \big] \vertiii{\phi_h - \widehat{{\phi}}_h}_{\Sigma_{h}} \vertiii{(\mathbf{v}_h,q)}_h. \label{Shdiff-6}
\end{align}
Using the discrete inf-sup condition \eqref{wellpos-0} and the estimates \eqref{Shdiff-2} and \eqref{Shdiff-6}, we obtain the result \eqref{shdiff}. 
\end{proof}

\begin{lemma} \label{Mh-diff}
For any given $ \mathbf{u}_h, \widehat{\mathbf{u}}_h \in \mathbf{V}_h$, the following result holds:
\begin{align}
\vertiii{\mathbb M (\mathbf{u}_h) - \mathbb M (\widehat{\mathbf{u}}_h)}_{\Sigma_h} 
\lesssim \dfrac{(1+C_P)^2 C_q^2C_P}{ C_\theta^2 \kappa^{3/2} \sqrt{\mu_{min}}} \| g\|_{0,\Omega} \vertiii{(\mathbf{u}_h - \widehat{\mathbf{u}}_h, p_h - \widehat{{p}}_h)}_h. \label{Mhdiff}
\end{align}
\end{lemma}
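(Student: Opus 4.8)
The plan is to transcribe the continuous argument of Lemma~\ref{m-diff} into the discrete, stabilized setting, the only genuinely new ingredients being the bookkeeping of the projectors inside $c^{S}_{T,h}$ and the stabilization form $\mathcal{L}_{3,h}$. Write $\phi_h:=\mathbb M_h(\mathbf{u}_h)$ and $\widehat{\phi}_h:=\mathbb M_h(\widehat{\mathbf{u}}_h)$, and set $\eta_h:=\phi_h-\widehat{\phi}_h\in\Sigma_h$. First I would use the coercivity bound \eqref{dd2-3}, which holds with the velocity argument $\mathbf{u}_h$ since the skew-symmetric part $c^{S}_{T,h}$ vanishes on the diagonal, to get $C_\theta\vertiii{\eta_h}_{\Sigma_h}^2\lesssim A_{T,h}(\mathbf{u}_h;\eta_h,\eta_h)+\mathcal{L}_{3,h}(\eta_h,\eta_h)$. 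Splitting $\eta_h=\phi_h-\widehat{\phi}_h$ in the second slot and invoking the two defining identities of \eqref{dd_couple-2}, namely $A_{T,h}(\mathbf{u}_h;\phi_h,\eta_h)+\mathcal{L}_{3,h}(\phi_h,\eta_h)=G_h(\eta_h)=A_{T,h}(\widehat{\mathbf{u}}_h;\widehat{\phi}_h,\eta_h)+\mathcal{L}_{3,h}(\widehat{\phi}_h,\eta_h)$, the right-hand side telescopes — the $a_{T,h}$ and $\mathcal{L}_{3,h}$ contributions cancel because neither depends on the velocity — leaving
\[
A_{T,h}(\mathbf{u}_h;\eta_h,\eta_h)+\mathcal{L}_{3,h}(\eta_h,\eta_h)=c^{S}_{T,h}(\widehat{\mathbf{u}}_h;\widehat{\phi}_h,\eta_h)-c^{S}_{T,h}(\mathbf{u}_h;\widehat{\phi}_h,\eta_h)=c^{S}_{T,h}(\widehat{\mathbf{u}}_h-\mathbf{u}_h;\widehat{\phi}_h,\eta_h),
\]
using linearity of $c^{E}_{T,h}$ in its velocity slot.

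Next I would estimate the discrete trilinear form $c^{S}_{T,h}(\widehat{\mathbf{u}}_h-\mathbf{u}_h;\widehat{\phi}_h,\eta_h)$. Its two elementwise contributions are of the form $\big(\boldsymbol{\Pi}^{0,E}_k(\widehat{\mathbf{u}}_h-\mathbf{u}_h)\cdot\boldsymbol{\Pi}^{0,E}_{k-1}\nabla a_h,\,\Pi^{0,E}_k b_h\big)_E$ with $\{a_h,b_h\}=\{\widehat{\phi}_h,\eta_h\}$. On each $E$ I would apply Hölder with exponents $(4,2,4)$, then use the $L^q$-stability of the $L^2$-projections from Remark~\ref{stabilityP} together with $\|\boldsymbol{\Pi}^{0,E}_{k-1}\nabla a_h\|_{0,E}\lesssim\|\nabla a_h\|_{0,E}$, aggregate the elementwise triple products via a discrete Hölder inequality into $\|\widehat{\mathbf{u}}_h-\mathbf{u}_h\|_{0,4,\Omega}\,\|\nabla a_h\|_{0,\Omega}\,\|b_h\|_{0,4,\Omega}$, and finally invoke the embedding $H^1(\Omega)\hookrightarrow L^4(\Omega)$ and the Poincaré inequality. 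This gives
\[
|c^{S}_{T,h}(\widehat{\mathbf{u}}_h-\mathbf{u}_h;\widehat{\phi}_h,\eta_h)|\lesssim(1+C_P)^2C_q^2\,\|\nabla(\mathbf{u}_h-\widehat{\mathbf{u}}_h)\|_{0,\Omega}\,\|\nabla\widehat{\phi}_h\|_{0,\Omega}\,\|\nabla\eta_h\|_{0,\Omega},
\]
exactly as in the continuous case.

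Finally I would combine the last two displays with the coercivity bound, cancel one factor of $\vertiii{\eta_h}_{\Sigma_h}$ using $\sqrt{\kappa}\,\|\nabla\eta_h\|_{0,\Omega}\le\vertiii{\eta_h}_{\Sigma_h}$, bound $\|\nabla\widehat{\phi}_h\|_{0,\Omega}\le\tfrac{1}{\sqrt{\kappa}}\vertiii{\widehat{\phi}_h}_{\Sigma_h}\lesssim\tfrac{C_P}{C_\theta\kappa}\|g\|_{0,\Omega}$ via Lemma~\ref{lm-ddM}, and convert $\|\nabla(\mathbf{u}_h-\widehat{\mathbf{u}}_h)\|_{0,\Omega}\le\tfrac{1}{\sqrt{\mu_{min}}}\vertiii{(\mathbf{u}_h-\widehat{\mathbf{u}}_h,p_h-\widehat{p}_h)}_h$ (the velocity gradient term being dominated by $\vertiii{\cdot}_h$); collecting constants yields \eqref{Mhdiff}. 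The main obstacle I anticipate is the passage from the elementwise bounds to global Sobolev norms for $c^{S}_{T,h}$: because each factor carries a projector ($\boldsymbol{\Pi}^{0,E}_k$, $\boldsymbol{\Pi}^{0,E}_{k-1}$, $\Pi^{0,E}_k$), one must be careful to invoke only the mesh-independent $L^q$-stability of Remark~\ref{stabilityP} and to combine the local Hölder products so that no negative powers of $h_E$ survive; the remainder is a faithful copy of the proof of Lemma~\ref{m-diff}.
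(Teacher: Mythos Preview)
Your proposal is correct and follows essentially the same route as the paper's proof: coercivity \eqref{dd2-3}, telescoping via the two instances of \eqref{dd_couple-2} so that $a_{T,h}$ and $\mathcal{L}_{3,h}$ cancel, then a $(4,2,4)$ H\"older estimate on the surviving $c^{S}_{T,h}$ term combined with Remark~\ref{stabilityP}, the $H^1\hookrightarrow L^4$ embedding, and Lemma~\ref{lm-ddM}. The only cosmetic difference is that the paper keeps the two skew-symmetric pieces separate rather than immediately writing $c^{S}_{T,h}(\widehat{\mathbf{u}}_h-\mathbf{u}_h;\widehat{\phi}_h,\eta_h)$ via linearity, but the subsequent estimates are identical.
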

\begin{proof}
We begin with the coercivity property \eqref{dd2-4c} for $\phi_h-\widehat{{\phi}}_h \in \Sigma_h$:
\begin{align}
C_\theta \vertiii{\phi_h - \widehat{\phi}_h}_{\Sigma_h}^2 &\lesssim A_{T,h}(\mathbf{u}_h; \phi_h - \widehat{\phi}_h, \phi_h - \widehat{\phi}_h) + \mathcal{L}_{3,h}(\phi_h - \widehat{\phi}_h,\phi_h - \widehat{\phi}_h) \nonumber \\
&\lesssim G_h(\phi_h-\widehat{{\phi}}_h)  - A_{T,h}(\mathbf{u}_h; \widehat{\phi}_h, \phi_h - \widehat{\phi}_h) - \mathcal{L}_{3,h}( \widehat{\phi}_h,\phi_h - \widehat{\phi}_h) \nonumber \\
&\lesssim  A_{T,h}(\widehat{\mathbf{u}}_h; \widehat{\phi}_h, \phi_h - \widehat{\phi}_h)  - A_{T,h}(\mathbf{u}_h; \widehat{\phi}_h, \phi_h - \widehat{\phi}_h) \qquad \text{(using \eqref{dd_couple-2})} \nonumber \\
&\lesssim  c^S_{T,h}(\widehat{\mathbf{u}}_h; \widehat{\phi}_h, \phi_h - \widehat{\phi}_h)  - c^S_{T,h}(\mathbf{u}_h; \widehat{\phi}_h, \phi_h - \widehat{\phi}_h) \nonumber \\
&\lesssim \dfrac{1}{2} \sum_{E \in \Omega_h} \big[\big( \boldsymbol{\Pi}^{0,E}_k (\widehat{\mathbf{u}}_h - \mathbf{u}_h) \cdot \boldsymbol{\Pi}^{0,E}_{k-1} \nabla \widehat{\phi}_h, \Pi^{0,E}_k (\phi_h - \widehat{\phi}_h) \big) + \nonumber \\ & \qquad \qquad \big( \boldsymbol{\Pi}^{0,E}_k ( \mathbf{u}_h - \widehat{\mathbf{u}}_h) \cdot \boldsymbol{\Pi}^{0,E}_{k-1} \nabla (\phi_h - \widehat{\phi}_h), \Pi^{0,E}_k \widehat{\phi}_h \big) \big] \nonumber \\
& \lesssim  \sum_{E \in \Omega_h} \big[ \| \boldsymbol{\Pi}^{0,E}_k (\widehat{\mathbf{u}}_h - \mathbf{u}_h)\|_{0,4,E} \|\boldsymbol{\Pi}^{0,E}_{k-1} \nabla \widehat{\phi}_h\|_{0,E} \|\Pi^{0,E}_k (\phi_h - \widehat{\phi}_h)\|_{0,4,E} \,+  \nonumber \\ & \qquad \qquad \| \boldsymbol{\Pi}^{0,E}_k (\widehat{\mathbf{u}}_h - \mathbf{u}_h)\|_{0,4,E} \|\boldsymbol{\Pi}^{0,E}_{k-1} \nabla(\phi_h - \widehat{\phi}_h)\|_{0,E} \|\Pi^{0,E}_k \widehat{\phi}_h\|_{0,4,E}\big].  \nonumber 
\intertext{Using Remark \ref{stabilityP}, the Sobolev embedding theorem and \eqref{dd2-f0}, it gives}
C_\theta \vertiii{\phi_h - \widehat{\phi}_h}_{\Sigma_h}^2 & \lesssim   C_q^2 \| \widehat{\mathbf{u}}_h - \mathbf{u}_h\|_{1,\Omega} \big[\| \nabla \widehat{\phi}_h\|_{0,\Omega} \|\phi_h - \widehat{\phi}_h\|_{1,\Omega} +\| \widehat{\phi}_h\|_{1,\Omega} \|\nabla(\phi_h - \widehat{\phi}_h)\|_{0,\Omega} \big] \nonumber \\
& \lesssim \frac{(1 + C_P)^2 C_q^2 C_P}{C_\theta\kappa} \|g\|_{0,\Omega}\| \nabla ( \mathbf{u}_h - \widehat{\mathbf{u}}_h)\|_{0,\Omega}  \| \nabla (\phi_h - \widehat{\phi}_h)\|_{0,\Omega},
\end{align}
which completes the proof. 
\end{proof}

Hereafter, we define a ball $\widehat{\Sigma}_h$ such that
\begin{align}
\widehat{\Sigma}_h := \big\{ \phi_h \in \Sigma_h \,\, \text{such that} \,\,\, \vertiii{\phi_h}_{\Sigma_h} \leq \rho_0 \big\} \subset \Sigma_h, \label{dset}
\end{align}
where $\rho_0 :=\frac{C_P} { C_\theta \sqrt{\kappa}} \|g\|_{0,\Omega}$.
\begin{lemma}
For any $\phi_h \in \widehat{\Sigma}_h$, 
we have $\mathbb T_h(\phi_h) \in \widehat{\Sigma}_h$.
\end{lemma}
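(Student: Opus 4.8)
The plan is to mirror the proof of Lemma~\ref{cnts0}(i), replacing the role of Lemma~\ref{d2_unique} by its discrete counterpart, Lemma~\ref{lm-ddM}.

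First I would fix $\phi_h \in \widehat{\Sigma}_h$ and let $(\mathbf{u}_h, p_h) = \mathbf{S}_h(\phi_h) \in \mathbf{V}_h \times Q_h$ denote the unique solution of the decoupled momentum problem~\eqref{dd_couple-1}, whose well-posedness is guaranteed by Lemma~\ref{wellposed-1}; thus $\mathbf{u}_h = S^1_h(\phi_h)$. By the very definition of $\mathbb T_h$, one has $\mathbb T_h(\phi_h) = \mathbb M_h(\mathbf{u}_h)$, where $\mathbb M_h(\mathbf{u}_h) \in \Sigma_h$ is the solution of~\eqref{dd_couple-2} with velocity datum $\mathbf{u}_h$. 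I would emphasize here that $\mathbb M_h$ is defined on the whole of $\mathbf{V}_h$, so, unlike in the continuous setting with the operator $\mathbb M$, no divergence-free constraint on $\mathbf{u}_h$ has to be checked; this is precisely why the discrete inclusion is somewhat cleaner than its continuous analogue.

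The second step is to invoke the stability bound of Lemma~\ref{lm-ddM} directly for the datum $\mathbf{u}_h \in \mathbf{V}_h$, which gives
\begin{align*}
	\vertiii{\mathbb T_h(\phi_h)}_{\Sigma_h} = \vertiii{\mathbb M_h(\mathbf{u}_h)}_{\Sigma_h} \lesssim \frac{C_P}{C_\theta \sqrt{\kappa}} \|g\|_{0,\Omega} = \rho_0,
\end{align*}
and hence $\mathbb T_h(\phi_h) \in \widehat{\Sigma}_h$ by the definition~\eqref{dset} of the ball.

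I do not expect any genuine obstacle: the estimate in Lemma~\ref{lm-ddM} is independent of the vector argument of $\mathbb M_h$ (the only datum on the right-hand side of~\eqref{dd_couple-2} is $G_h$, and the convective contribution enters~\eqref{dd_couple-2} solely through the skew-symmetric form $c^S_{T,h}$, which vanishes on the diagonal), so the radius $\rho_0$ of the ball is handed to us for free. The single delicate point is the bookkeeping of constants: the radius $\rho_0$ in~\eqref{dset} has been chosen so as to coincide exactly with the right-hand side of the stability estimate of Lemma~\ref{lm-ddM} (itself coming from the coercivity constant $C_\theta$ of $A_{T,h}+\mathcal{L}_{3,h}$ in~\eqref{dd2-3} together with the Poincar\'e-type bound $|G_h(\psi_h)| \lesssim C_P \|g\|_{0,\Omega}\|\nabla\psi_h\|_{0,\Omega}$), and with this choice the inclusion is immediate.
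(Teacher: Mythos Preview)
Your proposal is correct and follows exactly the approach of the paper, which dispatches the lemma in one line by invoking Lemma~\ref{lm-ddM}. Your write-up simply unpacks that reference in more detail, including the observation that no divergence-free constraint is needed because $\mathbb M_h$ is defined on all of $\mathbf{V}_h$.
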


\begin{proof}
The proof follows from Lemma \ref{lm-ddM}. 
\end{proof}

\begin{lemma} \label{lm-tdiff}
For any $\phi_h, \widehat{{\phi}}_h \in \widehat{\Sigma}_h$, the discrete operator $\mathbb T_h$ satisfies the following:
\begin{align}
\vertiii{\mathbb{T}_h(\phi_h) - \mathbb{T}_h(\widehat{{\phi}}_h)}_{\Sigma_h} &\lesssim 	\ \dfrac{(1+C_P)^3C_q^3 \rho_0}{ \widetilde{\beta}C_\theta \kappa^{3/2} \mu_{min}} \Big[ \alpha C_q(1+C_P)\|\mathbf{f}\|_{0, \Omega}  +   L_\mu C_{\mathbf{S}, \widehat{\mathbf{u}}_h} \,+  \nonumber \\ & \qquad \qquad \qquad \qquad  \theta_1^\ast  L_\mu  \|\nabla \widehat{\mathbf{u}}_h \|_{0,3,\Omega} \Big] \vertiii{\phi_h - \widehat{{\phi}}_h}_{\Sigma_{h}}.
\label{Thdiff}
\end{align}
\end{lemma}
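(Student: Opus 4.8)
The plan is to mirror the continuous argument of Lemma~\ref{cnts0}(ii), substituting the discrete Lipschitz-type estimates of Lemmas~\ref{Sh-diff} and~\ref{Mh-diff} for their continuous analogues (Lemmas~\ref{s-diff} and~\ref{m-diff}). First I would use the definition $\mathbb T_h(\phi_h)=\mathbb M_h(S^1_h(\phi_h))$ to write
\[
\vertiii{\mathbb T_h(\phi_h)-\mathbb T_h(\widehat{{\phi}}_h)}_{\Sigma_h}
= \vertiii{\mathbb M_h(S^1_h(\phi_h))-\mathbb M_h(S^1_h(\widehat{{\phi}}_h))}_{\Sigma_h}.
\]
Since $S^1_h(\phi_h),\,S^1_h(\widehat{{\phi}}_h)\in\mathbf V_h$ — and, in contrast with the continuous setting, $\mathbb M_h$ is defined on all of $\mathbf V_h$ with no divergence-free requirement — Lemma~\ref{Mh-diff} applies with $\mathbf u_h=S^1_h(\phi_h)$ and $\widehat{\mathbf u}_h=S^1_h(\widehat{{\phi}}_h)$, so that the right-hand side becomes $\lesssim \frac{(1+C_P)^2 C_q^2 C_P}{C_\theta^2\kappa^{3/2}\sqrt{\mu_{min}}}\|g\|_{0,\Omega}\,\vertiii{\mathbf S_h(\phi_h)-\mathbf S_h(\widehat{{\phi}}_h)}_h$, recalling that $\vertiii{(\mathbf u_h-\widehat{\mathbf u}_h,p_h-\widehat{{p}}_h)}_h=\vertiii{\mathbf S_h(\phi_h)-\mathbf S_h(\widehat{{\phi}}_h)}_h$ by definition of $\mathbf S_h$.

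Next I would insert the bound of Lemma~\ref{Sh-diff} for $\vertiii{\mathbf S_h(\phi_h)-\mathbf S_h(\widehat{{\phi}}_h)}_h$; this contributes the prefactor $C_q(1+C_P)/(\widetilde\beta\sqrt{\kappa\mu_{min}})$, the bracketed data factor $\alpha C_q(1+C_P)\|\mathbf f\|_{0,\Omega}+L_\mu C_{\mathbf S,\widehat{\mathbf u}_h}+\theta_1^\ast L_\mu\|\nabla\widehat{\mathbf u}_h\|_{0,3,\Omega}$, and a trailing $\vertiii{\phi_h-\widehat{{\phi}}_h}_{\Sigma_h}$. Finally I would use the definition $\rho_0=\frac{C_P}{C_\theta\sqrt\kappa}\|g\|_{0,\Omega}$ to rewrite $\frac{(1+C_P)^2 C_q^2 C_P}{C_\theta^2\kappa^{3/2}\sqrt{\mu_{min}}}\|g\|_{0,\Omega}=\frac{(1+C_P)^2 C_q^2}{C_\theta\kappa\sqrt{\mu_{min}}}\rho_0$, and then multiply by the prefactor from Lemma~\ref{Sh-diff} and collect the powers of $(1+C_P)$, $C_q$, $\kappa$, $\mu_{min}$, $C_\theta$ and $\widetilde\beta$; this produces exactly the prefactor $\frac{(1+C_P)^3 C_q^3\rho_0}{\widetilde\beta C_\theta\kappa^{3/2}\mu_{min}}$ appearing in \eqref{Thdiff}.

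There is essentially no analytic obstacle here: the estimate is a straightforward composition of two already-established bounds, so the only work is careful bookkeeping of the constants. The single point worth flagging is the identification of symbols: in \eqref{Thdiff}, $\widehat{\mathbf u}_h$ denotes $S^1_h(\widehat{{\phi}}_h)$, which is well-defined by Lemma~\ref{wellposed-1}, and $C_{\mathbf S,\widehat{\mathbf u}_h}$ is the finite quantity $\|\boldsymbol\Pi^0_{k-1}\nabla\widehat{\mathbf u}_h\|_{0,3,\Omega}$ introduced in Lemma~\ref{Sh-diff}; both appear only as data-dependent multiplicative factors and play no role beyond propagating through the composition.
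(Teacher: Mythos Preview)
Your proposal is correct and follows exactly the paper's approach: the paper's proof is a single line stating that \eqref{Thdiff} is evident from the definition of $\mathbb T_h$ and Lemmas~\ref{Sh-diff} and~\ref{Mh-diff}. Your write-up simply makes the constant bookkeeping explicit, which the paper omits.
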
	

\begin{proof}
The result \eqref{Thdiff} is evident from the definition of $\mathbb T_h$ and Lemmas \ref{Sh-diff} and \ref{Mh-diff}. 
\end{proof}

We now establish the existence of discrete solutions, as outlined below:
\begin{theorem} \label{d-exist}
Let $\widehat{\Sigma}_h$ be defined by \eqref{dset}. Then the stabilized virtual element problem \eqref{nvem} has at least one solution $(\mathbf{u}_h,p_h,\phi_h) \in \mathbf{V}_h \times Q_h \times \Sigma_h$ with $\phi_h \in \widehat{\Sigma}_h$, and there holds
\begin{align}
\vertiii{(\mathbf{u}_h,p_h)}_h \lesssim\dfrac{ \alpha C_q^2 (1+C_P)^2}{\beta_0 \sqrt{ \kappa \mu_{min}}} \|\mathbf{f}\|_{0, \Omega} \vertiii{\widehat{\phi}_h}_{\Sigma_h}  \qquad \text{and} \qquad 	\vertiii{\phi_h}_{\Sigma_h} \lesssim \frac{C_P} { C_\theta \sqrt{\kappa}} \|g\|_{0,\Omega}.  \label{th-f0}
\end{align}	
\end{theorem}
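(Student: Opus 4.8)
### Proof Proposal for Theorem \ref{d-exist}

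The plan is to invoke Brouwer's fixed-point theorem for the discrete map $\mathbb{T}_h$ restricted to the ball $\widehat{\Sigma}_h$, reusing the discrete analogues of the continuous construction of Section \ref{cnt_fixed}. First I would note that $\widehat{\Sigma}_h$ is a nonempty, closed, bounded and convex subset of the finite-dimensional space $\Sigma_h$ — hence compact — since $\vertiii{\cdot}_{\Sigma_h}$ is a genuine norm on $\Sigma_h$ (as $\Sigma_h\subset H^1_0(\Omega)$ and $\mathcal{L}_{3,h}$ is symmetric positive semi-definite), so Brouwer's theorem applies to any continuous self-map of $\widehat{\Sigma}_h$.

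Second, $\mathbb{T}_h=\mathbb{M}_h\circ S^1_h$ is well defined on $\Sigma_h$: for each $\phi_h\in\Sigma_h$, Lemma \ref{wellposed-1} gives a unique $\mathbf{S}_h(\phi_h)=(\mathbf{u}_h,p_h)\in\mathbf{V}_h\times Q_h$, and Lemma \ref{lm-ddM} then gives a unique $\mathbb{M}_h(\mathbf{u}_h)\in\Sigma_h$. The previously established self-mapping property (the lemma immediately preceding Lemma \ref{lm-tdiff}) shows $\mathbb{T}_h(\widehat{\Sigma}_h)\subseteq\widehat{\Sigma}_h$; the essential point is that the stability bound \eqref{dd2-f0} for $\mathbb{M}_h$ is independent of the velocity input, so $\vertiii{\mathbb{T}_h(\phi_h)}_{\Sigma_h}\le\rho_0$ for every $\phi_h\in\widehat{\Sigma}_h$, with no restriction on the data.

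Third, continuity of $\mathbb{T}_h$ on $\widehat{\Sigma}_h$ follows from Lemma \ref{lm-tdiff}: estimate \eqref{Thdiff} gives $\vertiii{\mathbb{T}_h(\phi_h)-\mathbb{T}_h(\widehat{\phi}_h)}_{\Sigma_h}\lesssim C\,\vertiii{\phi_h-\widehat{\phi}_h}_{\Sigma_h}$, and although the constant $C$ involves $C_{\mathbf{S},\widehat{\mathbf{u}}_h}$ and $\|\nabla\widehat{\mathbf{u}}_h\|_{0,3,\Omega}$ with $\widehat{\mathbf{u}}_h=S^1_h(\widehat{\phi}_h)$, these are finite for each fixed $\widehat{\phi}_h$ (see Lemma \ref{Sh-diff}), so letting $\phi_h\to\widehat{\phi}_h$ yields continuity at $\widehat{\phi}_h$ and hence on all of $\widehat{\Sigma}_h$. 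Brouwer's theorem then produces $\phi_h\in\widehat{\Sigma}_h$ with $\mathbb{T}_h(\phi_h)=\phi_h$; setting $(\mathbf{u}_h,p_h):=\mathbf{S}_h(\phi_h)$, the triple $(\mathbf{u}_h,p_h,\phi_h)$ solves \eqref{nvem}, since \eqref{ddfix} is equivalent to \eqref{nvem} by construction of $\mathbf{S}_h$, $\mathbb{M}_h$ and $\mathbb{T}_h$.

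Finally, the bounds \eqref{th-f0} are read off directly: $\vertiii{\phi_h}_{\Sigma_h}\le\rho_0=\frac{C_P}{C_\theta\sqrt{\kappa}}\|g\|_{0,\Omega}$ because $\phi_h\in\widehat{\Sigma}_h$, and the bound on $\vertiii{(\mathbf{u}_h,p_h)}_h$ is exactly \eqref{wellpos-s0} applied with input $\phi_h$. I expect the only real subtlety to lie in the continuity step: in contrast to Theorem \ref{unique-1}, there is no small-data hypothesis here, so $\mathbb{T}_h$ need not be a contraction, and one must argue continuity directly and observe that the Lipschitz constant in \eqref{Thdiff} stays finite pointwise on $\widehat{\Sigma}_h$ — this is where the finite dimensionality of $\Sigma_h$ and $\mathbf{V}_h$, together with the finiteness of $C_{\mathbf{S},\widehat{\mathbf{u}}_h}$ from Lemma \ref{Sh-diff}, are used.
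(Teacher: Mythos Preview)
Your proposal is correct and follows the same strategy as the paper: invoke Brouwer's fixed-point theorem for the continuous self-map $\mathbb{T}_h$ on the compact convex ball $\widehat{\Sigma}_h$, using Lemma \ref{lm-tdiff} for continuity, the preceding lemma for the self-mapping property, and Lemmas \ref{wellposed-1} and \ref{lm-ddM} for the a priori bounds \eqref{th-f0}. The paper's own proof is a three-line sketch citing exactly these ingredients; your version simply fills in the details (compactness of $\widehat{\Sigma}_h$ via finite dimensionality, and pointwise finiteness of the data-dependent constant in \eqref{Thdiff}) that the paper leaves implicit.
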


\begin{proof}
From Lemma \ref{lm-tdiff}, the  operator $\mathbb T_h: \widehat{\Sigma}_h \rightarrow \widehat{\Sigma}_h$ is continuous. Further, the Brouwer fixed point theorem ensures that $\mathbb T_h$ admits at least one fixed point. Additionally, \eqref{th-f0} follow directly from Lemmas \ref{wellposed-1} and \ref{lm-ddM}. 
\end{proof}

Next, we address the uniqueness of the discrete solution using the contraction mapping theorem:
\begin{theorem} \label{sunique}
Let $(\mathbf{u}_h,p_h,\phi_h) \in \mathbf{V}_h \times Q_h \times \Sigma_h$ with $\phi_h \in \widehat{\Sigma}_h$ be {any} solution of the stabilized problem \eqref{nvem}. {Furthermore, we assume that $\mathbf{u}_h \in [W^{1,3}(\Omega_h)]^2$ and the data are such that the following bound holds}
\begin{align}
\dfrac{(1+C_P)^3C_q^3 \rho_0}{ \widetilde{\beta}C_\theta \kappa^{3/2} \mu_{min}} \Big[ \alpha C_q(1+C_P)\|\mathbf{f}\|_{0, \Omega} + L_\mu C_{\mathbf{S}, {\mathbf{u}}_h} +   \theta_1^\ast  L_\mu  \|\nabla {\mathbf{u}}_h \|_{0,3,\Omega} \Big] <1,
\label{Thdiff2}
\end{align}
then, the stabilized coupled problem \eqref{nvem} has a unique solution.
\end{theorem}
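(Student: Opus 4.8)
The plan is to reduce the uniqueness statement to the contraction estimate of Lemma~\ref{lm-tdiff} via the equivalence between the stabilized problem \eqref{nvem} and the discrete fixed-point problem \eqref{ddfix}. Suppose $(\mathbf{u}_h,p_h,\phi_h)$ and $(\widehat{\mathbf{u}}_h,\widehat{p}_h,\widehat{\phi}_h)$ are two solutions of \eqref{nvem}. By Lemma~\ref{lm-ddM} (or Theorem~\ref{d-exist}) each temperature component obeys $\vertiii{\phi_h}_{\Sigma_h}\le\rho_0$ and $\vertiii{\widehat{\phi}_h}_{\Sigma_h}\le\rho_0$, so both belong to the ball $\widehat{\Sigma}_h$ of \eqref{dset}; moreover, by construction $(\mathbf{u}_h,p_h)=\mathbf{S}_h(\phi_h)$, $(\widehat{\mathbf{u}}_h,\widehat{p}_h)=\mathbf{S}_h(\widehat{\phi}_h)$, and $\phi_h=\mathbb{T}_h(\phi_h)$, $\widehat{\phi}_h=\mathbb{T}_h(\widehat{\phi}_h)$. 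Thus it suffices to show that $\mathbb{T}_h$ has at most one fixed point in $\widehat{\Sigma}_h$, after which $\mathbf{u}_h,p_h$ are recovered uniquely from $\phi_h$.

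First I would apply Lemma~\ref{lm-tdiff} to the pair $\phi_h,\widehat{\phi}_h\in\widehat{\Sigma}_h$, noting that there the velocity appearing in the bound is $\widehat{\mathbf{u}}_h=S_h^1(\widehat{\phi}_h)$, i.e.\ precisely the velocity component of the second solution. This gives
\begin{align*}
	\vertiii{\phi_h-\widehat{\phi}_h}_{\Sigma_h}
	&=\vertiii{\mathbb{T}_h(\phi_h)-\mathbb{T}_h(\widehat{\phi}_h)}_{\Sigma_h}\\
	&\lesssim \dfrac{(1+C_P)^3C_q^3\rho_0}{\widetilde{\beta}C_\theta\kappa^{3/2}\mu_{min}}\Big[\alpha C_q(1+C_P)\|\mathbf{f}\|_{0,\Omega}+L_\mu C_{\mathbf{S},\widehat{\mathbf{u}}_h}+\theta_1^\ast L_\mu\|\nabla\widehat{\mathbf{u}}_h\|_{0,3,\Omega}\Big]\vertiii{\phi_h-\widehat{\phi}_h}_{\Sigma_h}.
\end{align*}
The bracketed prefactor is exactly the left-hand side of \eqref{Thdiff2} evaluated at the solution $(\widehat{\mathbf{u}}_h,\widehat{p}_h,\widehat{\phi}_h)$, hence strictly less than $1$ by hypothesis. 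Absorbing $\vertiii{\phi_h-\widehat{\phi}_h}_{\Sigma_h}$ forces $\vertiii{\phi_h-\widehat{\phi}_h}_{\Sigma_h}=0$, i.e.\ $\phi_h=\widehat{\phi}_h$. Then both $(\mathbf{u}_h,p_h)$ and $(\widehat{\mathbf{u}}_h,\widehat{p}_h)$ solve the decoupled problem \eqref{dd_couple-1} with the same datum $\phi_h=\widehat{\phi}_h$; the inf-sup/continuity estimate \eqref{wellpos-0} of Lemma~\ref{wellposed-1} gives uniqueness of that solution, so $(\mathbf{u}_h,p_h)=(\widehat{\mathbf{u}}_h,\widehat{p}_h)$. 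Combined with the existence guaranteed by Theorem~\ref{d-exist}, this proves that \eqref{nvem} is uniquely solvable.

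The main obstacle is essentially bookkeeping: the contraction constant in Lemma~\ref{lm-tdiff} is solution-dependent through $C_{\mathbf{S},\widehat{\mathbf{u}}_h}$ and $\|\nabla\widehat{\mathbf{u}}_h\|_{0,3,\Omega}$, so one must invoke the smallness hypothesis \eqref{Thdiff2} at the right argument. Pairing the two solutions in Lemma~\ref{lm-tdiff} ties $\widehat{\mathbf{u}}_h$ to the velocity of the second solution, for which \eqref{Thdiff2} is assumed — including the $[W^{1,3}(\Omega_h)]^2$ regularity needed to make $\|\nabla\widehat{\mathbf{u}}_h\|_{0,3,\Omega}$ finite; the term $C_{\mathbf{S},\widehat{\mathbf{u}}_h}$ is automatically finite because it is an $L^3$-norm of a piecewise polynomial. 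An equivalent, symmetric route would be to prove that $\mathbb{T}_h$ is a self-map of $\widehat{\Sigma}_h$ (already done) and a contraction under a data smallness condition independent of the particular solution, and then invoke the Banach fixed-point theorem directly; but since the statement only asks for uniqueness among solutions already known to exist, the short argument above suffices.
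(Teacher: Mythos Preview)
Your argument is correct and is precisely the fleshed-out version of the paper's one-line proof, which simply cites Theorem~\ref{d-exist} and \eqref{Thdiff2}: you invoke the fixed-point equivalence, apply the contraction estimate of Lemma~\ref{lm-tdiff} with the roles chosen so that the velocity appearing in the constant is the one for which \eqref{Thdiff2} is assumed, and then recover uniqueness of $(\mathbf{u}_h,p_h)$ from Lemma~\ref{wellposed-1}. Your careful bookkeeping about which solution's velocity enters the contraction constant is exactly the content the paper suppresses.
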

\begin{proof}
The proof follows from Theorem \ref{d-exist} and \eqref{Thdiff2}.  
\end{proof}

\section{Error estimates} \label{sec:5}
In this section we derive the error estimates for the proposed virtual element problem \eqref{nvem} in the energy norm. We first introduce the following polynomial and interpolation approximations. 
\begin{lemma}
	\label{lemmaproj1} (\cite{scott})
	Under the assumption \textbf{(A1)}, for any $\psi \in H^s(E)$ defined on $E\in \Omega_h$, it holds that
	\begin{align}
		\|\psi -\Pi^{\nabla,E}_k \psi\|_{m,E} &\lesssim  h^{s-m}_E |\psi|_{s,E} \quad s, m \in \mathbb{N}, \,\,\, s \geq 1,\,\, \, m \leq s \leq k+1,\label{eqproj1}	 \\
		\|\psi - \Pi^{0,E}_k \,\psi \|_{m,E} &\lesssim h^{s-m}_E |\psi|_{s,E} \quad s, m \in \mathbb{N},\,\,\, m \leq s \leq k+1.
	\end{align}
\end{lemma}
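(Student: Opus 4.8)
The plan is to reduce both estimates to the classical Bramble--Hilbert / Dupont--Scott polynomial approximation theory on star-shaped domains, exploiting the mesh regularity hypothesis \textbf{(A1)}. First I would recall that, since every $E\in\Omega_h$ is star-shaped with respect to a ball of radius $\gtrsim\gamma_0 h_E$, the Dupont--Scott lemma (\cite[Ch.~4]{scott}) produces a polynomial $q_k\in\mathbb P_k(E)$ --- the averaged Taylor polynomial of $\psi$ --- such that
\begin{align}
	\|\psi-q_k\|_{m,E}\lesssim h_E^{s-m}\,|\psi|_{s,E},\qquad 0\le m\le s\le k+1,\nonumber
\end{align}
with a hidden constant depending only on $k$, $s$, and $\gamma_0$, hence independent of $h$. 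This single estimate carries all the $h$-scaling; everything else is bookkeeping.

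Second, for the $L^2$-projection I would write $\psi-\Pi^{0,E}_k\psi=(\psi-q_k)+(q_k-\Pi^{0,E}_k\psi)$. The first summand is handled directly by the displayed bound. For the second, observe that $q_k-\Pi^{0,E}_k\psi=\Pi^{0,E}_k(q_k-\psi)$ is a polynomial of degree $\le k$, so the polynomial inverse inequality on star-shaped elements --- the same tool invoked in Remark~\ref{stabilityP} --- gives $\|q_k-\Pi^{0,E}_k\psi\|_{m,E}\lesssim h_E^{-m}\|q_k-\Pi^{0,E}_k\psi\|_{0,E}$; then $L^2$-stability of $\Pi^{0,E}_k$ yields $\|q_k-\Pi^{0,E}_k\psi\|_{0,E}\le\|q_k-\psi\|_{0,E}+\|\psi-\Pi^{0,E}_k\psi\|_{0,E}\le 2\|q_k-\psi\|_{0,E}$, and combining with the $m=0$ case of the Dupont--Scott bound closes the estimate. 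For the energy projection $\Pi^{\nabla,E}_k$ I would argue the same way through the seminorm: since $\Pi^{\nabla,E}_k$ reproduces $\mathbb P_k(E)$ and is the $H^1$-orthogonal projection normalized by $\int_{\partial E}(\psi-\Pi^{\nabla,E}_k\psi)\,ds=0$, one has $|\psi-\Pi^{\nabla,E}_k\psi|_{1,E}\le|\psi-q_k|_{1,E}\lesssim h_E^{s-1}|\psi|_{s,E}$; the boundary-average normalization together with a Poincar\'e--Wirtinger inequality on the star-shaped element upgrades this to control of $\|\psi-\Pi^{\nabla,E}_k\psi\|_{0,E}$, and the inverse-inequality/triangle trick of the preceding sentence then promotes both to the stated bound for all admissible $m$.

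The main obstacle I anticipate is purely technical rather than conceptual: ensuring that every constant is genuinely independent of the element $E$. This forces one to invoke the star-shapedness and the edge-length lower bound of \textbf{(A1)} each time a Poincar\'e constant, a polynomial inverse inequality, or a trace inequality on $\partial E$ enters --- standard but slightly delicate on general polygons with small edges. Since none of these steps is deep, the cleanest write-up simply cites \cite{scott} (specifically the averaged-Taylor estimates of \cite[Lemma~4.3.8]{scott} and the surrounding results) transported to the polygonal star-shaped setting guaranteed by \textbf{(A1)}, exactly as is standard in the VEM literature.
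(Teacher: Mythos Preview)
Your sketch is correct and is precisely the standard route: Dupont--Scott averaged Taylor polynomial on a star-shaped element, then triangle inequality plus polynomial inverse estimates and the $H^1$-minimality/boundary normalization of $\Pi^{\nabla,E}_k$ to transfer the rate to the projections. The paper does not actually give a proof of this lemma --- it is stated as a quotation from \cite{scott} (and the companion VEM literature) with no argument supplied --- so there is nothing to compare beyond noting that your outline reconstructs exactly the textbook proof being cited.
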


\begin{lemma}	\label{lemmaproj2}
	(\cite{vem31}) Under the assumption \textbf{(A1)}, for any $\psi \in H^{s+1}(E)$ there exists $\psi_I \in V_h(E)$ for all $E\in \Omega_h$ such that
	\begin{equation}
		\| \psi - \psi_I\|_{0,E} + h_E |\psi - \psi_I|_{1,E} \leq C h^{1+s}_E \, \|\psi \|_{s+1,E},\quad 0 \leq s\leq k, \label{projlll2}
	\end{equation}
	where $C$ is a positive constant depending only on $k$ and $\gamma_0$.
\end{lemma}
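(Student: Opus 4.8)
The plan is to take $\psi_I \in V_h(E)$ to be the canonical virtual element interpolant of $\psi$, defined by matching all the degrees of freedom $\mathbf{DF_1}$--$\mathbf{DF_3}$: that is, $\psi_I$ and $\psi$ are required to share the same vertex values, the same edge-point values, and the same internal moments up to order $k-2$. For $s \geq 1$ in two dimensions one has the embedding $H^{s+1}(E) \hookrightarrow C^0(\overline{E})$, so the pointwise evaluations defining $\mathbf{DF_1}$ and $\mathbf{DF_2}$ are well defined and bounded by $\|\psi\|_{s+1,E}$, while the moments $\mathbf{DF_3}$ are controlled by $\|\psi\|_{0,E}$ through Cauchy--Schwarz (the borderline case $s=0$ is handled by the averaged quasi-interpolant of \cite{vem31} rather than by point evaluation). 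Since the degrees of freedom are unisolvent on $V_h(E)$, this defines $\psi_I$ uniquely, and because $\mathbb{P}_k(E) \subseteq V_h(E)$ the induced interpolation operator $I:\psi \mapsto \psi_I$ reproduces polynomials, i.e. $Ip = p$ for every $p \in \mathbb{P}_k(E)$.

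Next I would insert the polynomial $p := \Pi^{0,E}_k \psi$ and apply the triangle inequality,
\[
	\|\psi - \psi_I\|_{m,E} \leq \|\psi - p\|_{m,E} + \|p - \psi_I\|_{m,E}, \qquad m = 0,1.
\]
The first term is bounded directly by Lemma \ref{lemmaproj1}, giving $\|\psi - p\|_{m,E} \lesssim h_E^{s+1-m}|\psi|_{s+1,E}$. For the second term the polynomial-reproduction property yields $p - \psi_I = Ip - I\psi = I(p - \psi)$, so it suffices to establish a scale-invariant stability bound of the form $\|I w\|_{m,E} \lesssim h_E^{-m}\bigl(\|w\|_{0,E} + h_E |w|_{1,E}\bigr)$ applied to $w = p - \psi$, and then invoke Lemma \ref{lemmaproj1} once more to control $\|p-\psi\|_{0,E}$ and $|p-\psi|_{1,E}$.

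The stability bound is obtained by a standard scaling argument: map $E$ to a reference configuration of unit diameter, using that $E$ is star-shaped with respect to a ball of radius $\geq \gamma_0 h_E$ by \textbf{(A1)}, so that on the reference element $V_h$ is a fixed finite-dimensional space on which all norms are equivalent with constants depending only on $k$ and $\gamma_0$. On the reference element one estimates $\|\widehat{Iw}\|_{m}$ by the degrees of freedom of $\widehat w$, which are in turn controlled by $\|\widehat w\|_{1}$ via a trace inequality for the vertex and edge contributions and Cauchy--Schwarz for the moments. Scaling back reintroduces the correct powers of $h_E$, and combining the resulting bound with the estimate for $\|\psi - p\|_{m,E}$ gives $\|\psi - \psi_I\|_{m,E} \lesssim h_E^{s+1-m}\|\psi\|_{s+1,E}$; taking $m=0$ and $m=1$ and forming the combination $\|\psi-\psi_I\|_{0,E} + h_E|\psi-\psi_I|_{1,E}$ yields exactly \eqref{projlll2}.

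The main obstacle is the uniform stability of the interpolation operator $I$, namely verifying that the constant in $\|Iw\|_{m,E}\lesssim\cdots$ is independent of $h_E$ and of the particular element shape. This is precisely where the mesh regularity assumption \textbf{(A1)} is essential: it guarantees that the reference-element norm equivalences and the trace inequalities bounding the vertex and edge evaluations in $\mathbf{DF_1}$--$\mathbf{DF_2}$ hold with a single constant depending only on $k$ and $\gamma_0$. Without star-shapedness and the lower edge-length bound, the pointwise degrees of freedom could not be controlled uniformly by Sobolev norms, and the constant $C$ in \eqref{projlll2} would fail to be mesh-independent.
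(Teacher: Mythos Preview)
The paper does not provide its own proof of this lemma: it is stated with a citation to \cite{vem31} and used as a black box. Your sketch follows the standard Bramble--Hilbert-plus-scaling route that underlies such results in the VEM literature, so in spirit it matches what one would find in the cited reference.

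One technical caveat worth noting: your scaling argument appeals to a ``reference configuration of unit diameter'' on which $V_h$ is a fixed finite-dimensional space. In the virtual element setting this is more delicate than in classical FEM, because the local space $V_h(E)$ is defined implicitly through the Laplace problem and the enhancement constraint $(iii)$, and it does not transform cleanly under an affine map to a single reference polygon. The proofs in the literature (including the one you cite for the $s=0$ case) typically avoid a literal reference-element argument and instead work directly on $E$, using a shape-regular sub-triangulation of the polygon together with trace and inverse inequalities whose constants depend only on $\gamma_0$. Your high-level strategy---polynomial reproduction, stability of the interpolation operator in terms of the degrees of freedom, and control of the degrees of freedom by Sobolev norms---is correct; only the mechanism for obtaining the uniform stability constant needs this adjustment.
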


Let $(\mathbf{u}_I, p_I,\phi_I) \in \mathbf{V}_h \times Q_h \times \Sigma_h$ denote the virtual interpolant of $(\mathbf{u}, p, \phi) \in \mathbf{V} \times Q \times \Sigma$. We further introduce the following notations, which will be employed in the subsequent analysis.
\begin{align*}
	e^\mathbf{u}_I := \mathbf{u}-\mathbf{u}_I, \quad e^\mathbf{u}_h := \mathbf{u}_h-\mathbf{u}_I, \quad 
	e^p_I := p-p_I, \quad e^p_h := p_h- p_I, \quad e^\phi_I:=\phi - \phi_I, \quad e^\phi_h:= \phi_h - \phi_I.
\end{align*}

\noindent \textbf{(A2) Regularity assumptions for error analysis:} We introduce the following conditions:
\begin{itemize}
	\item  The solution $(\mathbf{u}, p, \phi)$ satisfies $\mathbf{u} \in \mathbf{V} \cap [H^{k+1}(\Omega_h)]^2, \quad p \in Q \cap H^{k}(\Omega_h), \quad \phi \in \Sigma \cap H^{k+1}(\Omega_h)$;
	\item The loads $\mathbf{f}$ and $g$ satisfy $\mathbf{f} \in [H^{k}(\Omega_h)]^2$ , \quad $g \in H^{k}(\Omega_h)$;
	\item  The viscosity $\mu$ satisfies $\mu \in W^{k,\infty}(\Omega_h)$.
\end{itemize}
\begin{lemma} \label{lm-err1}
	{Under the assumptions \textbf{(A1)} and \textbf{(A2)}, let $(\mathbf{u}, p) \in \mathbf{V} \times Q$ and $(\mathbf{u}_h, p_h) \in \mathbf{V}_h \times Q_h$ be the solution of the problems \eqref{dvariation-1} and  \eqref{dd_couple-1}, respectively, with $\phi \in \widehat{\Sigma}$ and $\phi_h \in \widehat{\Sigma}_h$.} Then, the following estimate holds:
	\begin{align}
		\vertiii{(\mathbf{u}_h- \mathbf{u}_I, p_h-p_I)}_h &\lesssim  \frac{1}{\widetilde{\beta}}\Big[ \frac {\big( L_\mu {\|\nabla \mathbf{u}\|_{0,3,\Omega}} + \alpha \|\mathbf{f}\|_{0, \Omega} \big)}{ \sqrt{\mu_{min}}} \|\nabla(\phi_h - \phi_I)\|_{0,\Omega} + \frac{h^k}{\sqrt{\mu_{min}}} \big[ \alpha \big( \|\mathbf{f}\|_{0, \Omega} + \|\mathbf{f}\|_{k,\Omega}\big) \nonumber \\ & \qquad + L_\mu {\| \nabla \mathbf{u}\|_{0,3,\Omega}} \big] \|\phi\|_{k+1,\Omega} + h^k \big( h^{-1} \max_{E\in\Omega_h} \tau_{2,E}^{1/2} + \mu_{min}^{-1/2} \big) \|p\|_{k,\Omega} \, +  \nonumber \\ & \qquad h^k \big( 1 + \max_{E\in\Omega_h} \tau_{1,E}^{1/2}+ (\mu_{max} +  \max_{E\in\Omega_h}\|\mu\|_{k,\infty, E}) \mu_{min}^{-1/2} \big) \|\mathbf{u}\|_{k+1,\Omega}  \Big]. \label{lm-err1f}
	\end{align} 
\end{lemma}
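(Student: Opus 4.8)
The strategy is to exploit the discrete inf-sup/well-posedness estimate \eqref{wellpos-0} from Lemma~\ref{wellposed-1} applied to the error function $(e^\mathbf{u}_h, e^p_h) = (\mathbf{u}_h - \mathbf{u}_I, p_h - p_I) \in \mathbf{V}_h \times Q_h$. Thus the first step is to bound
\[
\widetilde{\beta}\,\vertiii{(e^\mathbf{u}_h, e^p_h)}_h \lesssim \sup_{(\mathbf{v}_h,q_h)\neq(\mathbf 0,0)} \frac{A_{V,h}[\phi_h;(e^\mathbf{u}_h,e^p_h),(\mathbf{v}_h,q_h)] + \mathcal{L}_h[(e^\mathbf{u}_h,e^p_h),(\mathbf{v}_h,q_h)]}{\vertiii{(\mathbf{v}_h,q_h)}_h},
\]
and the task reduces to estimating the numerator. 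The plan is to write $A_{V,h}[\phi_h;(e^\mathbf{u}_h,e^p_h),(\mathbf{v}_h,q_h)] + \mathcal{L}_h[\cdot] = A_{V,h}[\phi_h;(\mathbf{u}_h,p_h),(\mathbf{v}_h,q_h)] + \mathcal{L}_h[(\mathbf{u}_h,p_h),\cdot] - A_{V,h}[\phi_h;(\mathbf{u}_I,p_I),(\mathbf{v}_h,q_h)] - \mathcal{L}_h[(\mathbf{u}_I,p_I),\cdot]$, then use the discrete equation \eqref{dd_couple-1} to replace the first group by $F_{h,\phi_h}(\mathbf{v}_h)$, and insert the continuous equation \eqref{dvariation-1} (tested against $(\mathbf{v}_h,q_h)$, viewed in $\mathbf{V}\times Q$) to bring in $F_\phi(\mathbf{v}_h)$. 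After adding and subtracting suitable polynomial projections and interpolants, the numerator splits into consistency-type terms of the following kinds: (i) $F_{h,\phi_h}(\mathbf{v}_h) - F_\phi(\mathbf{v}_h)$, measuring the difference between the discrete and continuous loads, which further splits into a term involving $\phi_h - \phi$ (handled via $\phi_h - \phi_I$ plus interpolation error on $\phi$, using the Lipschitz/boundedness properties \eqref{mu-reg} for the $\alpha\mathbf{f}\phi$ structure and Remark~\ref{stabilityP} for projector stability) and a polynomial-consistency term giving the $h^k\alpha(\|\mathbf{f}\|_{0,\Omega}+\|\mathbf{f}\|_{k,\Omega})\|\phi\|_{k+1,\Omega}$ contribution; (ii) the viscosity-consistency term $a_{V,h}(\phi_h;\mathbf{u}_I,\mathbf{v}_h) - a_V(\phi;\mathbf{u},\mathbf{v}_h)$, which is controlled by writing $\mu(\Pi^{0,E}_k\phi_h)-\mu(\phi)$ and invoking $L_\mu$ together with $\|\nabla\mathbf{u}\|_{0,3,\Omega}$, plus the standard VEM polynomial-consistency error in $\varepsilon(\mathbf{u})$ contributing $h^k(\mu_{max}+\|\mu\|_{k,\infty})\mu_{min}^{-1/2}\|\mathbf{u}\|_{k+1,\Omega}$; (iii) the $b_h$ versus $b$ consistency error, giving the $h^k\mu_{min}^{-1/2}\|p\|_{k,\Omega}$ and $h^k\|\mathbf{u}\|_{k+1,\Omega}$ contributions; (iv) the stabilization terms $\mathcal{L}_{1,h}(\mathbf{u}_I,\mathbf{v}_h)$ and $\mathcal{L}_{2,h}(p_I,q_h)$ applied to the interpolant, which vanish on polynomials so are bounded by $h^k\max\tau_{1,E}^{1/2}\|\mathbf{u}\|_{k+1,\Omega}$ and $h^{k-1}\max\tau_{2,E}^{1/2}\|p\|_{k,\Omega}$ respectively, using \eqref{vem-a}, \eqref{vem-c} and the approximation estimates of Lemmas~\ref{lemmaproj1}--\ref{lemmaproj2}.

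For each piece the recipe is the same: bound the bilinear/linear form by products of norms, use the Cauchy--Schwarz and Hölder inequalities, the Sobolev embeddings $H^1\hookrightarrow L^q$ with the embedding constants $C_q$, $(1+C_P)$ as in the earlier lemmas, the projector stability from Remark~\ref{stabilityP}, and the polynomial and interpolation approximation estimates of Lemmas~\ref{lemmaproj1} and~\ref{lemmaproj2}; then divide by $\vertiii{(\mathbf{v}_h,q_h)}_h$ and take the supremum. Collecting all contributions and factoring out $1/\widetilde\beta$ yields exactly the right-hand side of \eqref{lm-err1f}, where the $\|\nabla(\phi_h-\phi_I)\|_{0,\Omega}$ term comes from the parts of (i) and (ii) that depend on the (unknown) discrete temperature error $e^\phi_h$, and everything else is an $O(h^k)$ interpolation/consistency remainder.

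The main obstacle will be step (ii): handling the nonlinear viscosity consistency term $a_{V,h}(\phi_h;\mathbf{u}_I,\mathbf{v}_h)-a_V(\phi;\mathbf{u},\mathbf{v}_h)$ cleanly. One must simultaneously track three sources of error — the change of argument in $\mu$ (from $\phi$ to the projected discrete temperature $\Pi^{0,E}_k\phi_h$), the replacement of $\mathbf{u}$ by its interpolant and of $\varepsilon(\cdot)$ by its $L^2$-projection, and the VEM stabilization term with its $L^\infty$-projected viscosity $\mu(\Pi^{0,E}_0\phi_h)$ — and show the $L_\mu$-Lipschitz contributions assemble into the coefficient $L_\mu\|\nabla\mathbf{u}\|_{0,3,\Omega}$ multiplying both $\|\nabla(\phi_h-\phi_I)\|_{0,\Omega}$ and $h^k\|\phi\|_{k+1,\Omega}$, using an $L^3$--$L^6$--$L^2$ Hölder split exactly as in the proof of Lemma~\ref{Sh-diff}. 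A secondary delicate point is the bookkeeping of the stabilization parameters $\tau_{1,E},\tau_{2,E}$: the $\tau_{2,E}^{1/2}h^{-1}$ factor on the pressure term arises because $\mathcal{L}_{2,h}$ scales like $\tau_{2,E}\sim h_E^2$ against a first-order quantity, so one gets $\tau_{2,E}^{1/2}\cdot h_E^{k-1}\|p\|_{k}$, matching the stated $h^k(h^{-1}\max\tau_{2,E}^{1/2})\|p\|_{k,\Omega}$. Once these are organized, the remaining estimates are routine applications of the approximation lemmas and the norm definitions.
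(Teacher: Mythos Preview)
Your proposal is correct and follows essentially the same approach as the paper: apply the discrete inf--sup estimate \eqref{wellpos-0} to $(e^\mathbf{u}_h,e^p_h)$, use the discrete and continuous equations to rewrite the numerator as $|F_{h,\phi_h}-F_\phi| + |A_V-A_{V,h}| + |\mathcal{L}_h[(\mathbf{u}_I,p_I),\cdot]|$, and then estimate each piece exactly as you outline (the paper labels these $\eta_F$, $\eta_{A_V}$, $\eta_{\mathcal{L}_h}$ and splits $\eta_{A_V,a}$ further into five terms $a_1,\dots,a_5$). Your identification of the nonlinear-viscosity term as the delicate step, handled via the $L^3$--$L^6$--$L^2$ H\"older split as in Lemma~\ref{Sh-diff}, and your bookkeeping of the $\tau_{2,E}^{1/2}h^{-1}$ pressure factor both match the paper's treatment precisely.
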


\begin{proof}
	Let $(\mathbf{u}_I, p_I) \in \mathbf{V}_h \times Q_h$ denote the virtual interpolant of $(\mathbf{u}, p) \in \mathbf{V} \times Q$. For given $\phi_h \in \widehat{\Sigma}_h$, using the estimate \eqref{wellpos-0}, we infer
	\begin{align}
		\widetilde{\beta}\,	&\vertiii{(\mathbf{u}_h - \mathbf{u}_I, p_h-p_I)}_h \vertiii{(\mathbf{v}_h , q_h)}_h \nonumber \\ & \lesssim  A_{V,h}[ \phi_h; (\mathbf{u}_h - \mathbf{u}_I, p_h-p_I), (\mathbf{v}_h, q_h)] + \mathcal{L}_h[(\mathbf{u}_h - \mathbf{u}_I, p_h-p_I), (\mathbf{v}_h, q_h)] \nonumber \\
		& \lesssim |F_{h,\phi_h}(\mathbf{v}_h) - F_{\phi}(\mathbf{v}_h) | + |A_V[\phi;(\mathbf{u},p),(\mathbf{v}_h, q_h)] - A_{V,h} [\phi_h; ( \mathbf{u}_I, p_I), (\mathbf{v}_h, q_h)] | \nonumber \\ & \qquad + |\mathcal{L}_h [ ( \mathbf{u}_I, p_I), (\mathbf{v}_h, q_h)]| \nonumber \\
		&=: \eta_{F} + \eta_{A_V} + \eta_{\mathcal{L}_h}. \label{sterr-2} 
	\end{align}
	We estimate the above in the following steps: \newline
	\textbf{Step 1.} By adding and subtracting suitable terms, we obtain
	\begin{align}
		\eta_F&= \big|\sum_{E \in \Omega_h} \big[ \big(\alpha \mathbf{f} \Pi^{0,E}_k \phi_h, \boldsymbol{\Pi}^{0,E}_k \mathbf{v}_h \big) - \big( \alpha \mathbf{f} \phi,  \mathbf{v}_h \big) \big] \big| \nonumber \\
		&= \big| \sum_{E \in \Omega_h} \big[ \big(\alpha \mathbf{f} (\Pi^{0,E}_k \phi_h - \phi), \boldsymbol{\Pi}^{0,E}_k \mathbf{v}_h \big) + \big( \alpha \mathbf{f} \phi, \boldsymbol{\Pi}^{0,E}_k \mathbf{v}_h- \mathbf{v}_h \big) \big] \big| \nonumber \\
		&= \big| \sum_{E \in \Omega_h} \alpha \big[ \big( \mathbf{f} (\Pi^{0,E}_k \phi_h - \phi), \boldsymbol{\Pi}^{0,E}_k \mathbf{v}_h \big) + \big( \mathbf{f} \phi - \boldsymbol{\Pi}^{0,E}_k(\mathbf{f} \phi), \boldsymbol{\Pi}^{0,E}_k \mathbf{v}_h- \mathbf{v}_h \big) \big] \big|. \nonumber 
		\intertext{We use the stability of the projection operator in $L^2(E)$ and $L^4(E)$, the Sobolev embedding theorems and Lemma \ref{lemmaproj1}:}
		\eta_F &\lesssim \sum_{E \in \Omega_h} \alpha \big[ \|\mathbf{f}\|_{0,E}  \|\Pi^{0,E}_k \phi_h - \phi\|_{0,4,E} \|\boldsymbol{\Pi}^{0,E}_k \mathbf{v}_h \|_{0,4,E} + \| \mathbf{f} \phi - \boldsymbol{\Pi}^{0,E}_k(\mathbf{f} \phi)\|_{0,E} \|\boldsymbol{\Pi}^{0,E}_k \mathbf{v}_h- \mathbf{v}_h \|_{0,E} \big]	\nonumber \\
		&\lesssim \sum_{E \in \Omega_h} \alpha \big[ \|\mathbf{f}\|_{0,E} \big( \|\Pi^{0,E}_k (\phi - \phi_h)\|_{0,4,E} +  \| \phi- \Pi^{0,E}_k \phi \|_{0,4,E} \big)\|\boldsymbol{\Pi}^{0,E}_k \mathbf{v}_h \|_{0,4,E} + h^{k}_E |\mathbf{f} \phi|_{k-1,E} \| \nabla \mathbf{v}_h \|_{0,E} \big]	\nonumber \\
		&\lesssim \sum_{E \in \Omega_h} \alpha \big[ \|\mathbf{f}\|_{0,E} \big( \|\phi - \phi_h\|_{0,4,E} + h_E^k | \phi |_{k,4,E} \big)\| \mathbf{v}_h \|_{0,4,E} +  h^{k}_E |\mathbf{f}|_{k-1,4,E}  |\phi |_{k-1,4,E} \| \nabla \mathbf{v}_h \|_{0,E} \big]	\nonumber \\
		&\lesssim \alpha \big[ \|\mathbf{f}\|_{0,\Omega} \big( \| \phi - \phi_h\|_{0,4,\Omega} + h^k \| \phi \|_{k,4,\Omega} \big)\|  \mathbf{v}_h \|_{0,4,\Omega} + h^{k}  |\mathbf{f}|_{k-1,4,\Omega}  |\phi |_{k-1,4,\Omega} \| \nabla \mathbf{v}_h \|_{0,\Omega} \big]	\nonumber \\
		&\lesssim \alpha \big[ \|\mathbf{f}\|_{0,\Omega} \big( \| \phi - \phi_h\|_{1,\Omega} + h^k \| \phi \|_{k+1,\Omega} \big)\|  \mathbf{v}_h \|_{1,\Omega} +  h^k \|\mathbf{f}\|_{k,\Omega}  \|\phi \|_{k,\Omega} \| \nabla \mathbf{v}_h \|_{0,\Omega} \big]	\nonumber \\
		&\lesssim \frac{\alpha}{\sqrt{\mu_{min}}} \big[  \|\mathbf{f}\|_{0,\Omega} \big( h^k \|\phi\|_{k+1,\Omega} + \|\nabla (\phi_h - \phi_I)\|_{0,\Omega} \big) + h^{k} \|\mathbf{f}\|_{k,\Omega}  \|\phi \|_{k+1,\Omega}  \big] \vertiii{(\mathbf{v}_h,q_h)}_{h}. \label{sterr-3}
	\end{align}
	
	\noindent \textbf{Step 2.} By applying the definitions of the forms $A_V$ and $A_{V,h}$, we arrive at
	\begin{align}
		\eta_{A_V} & \lesssim | a_V(\phi; \mathbf{u}, \mathbf{v}_h) - a_{V,h}(\phi_h; \mathbf{u}_I, \mathbf{v}_h) | + | b(\mathbf{v}_h, p) - b_h(\mathbf{v}_h, p_I) | +  | b(\mathbf{u},q_h) - b_h(\mathbf{u}_I, q_h) | \nonumber \\ 
		& =:	\eta_{A_V,a} + \eta_{A_V,b1} + \eta_{A_V,b2}. \label{sterr-4}
	\end{align}
	$\bullet$ Concerning $\eta_{A_V,a}$, we proceed as follows
	\begin{align}
		\eta_{A_V,a} &= \big|\sum_{E \in \Omega_h} \big[ \big( \mu(\phi)\varepsilon(\mathbf{u}), \varepsilon(\mathbf{v}_h) \big) -  \big( \mu(\Pi^{0,E}_k \phi_h) \boldsymbol{\Pi}^{0,E}_{k-1} \varepsilon(\mathbf{u}_I), \boldsymbol{\Pi}^{0,E}_{k-1} \varepsilon(\mathbf{v}_h)  \big) \nonumber \\ & \qquad - \mu(\Pi^{0,E}_0 \phi_h) S^E_\nabla \big( (\mathbf{I} - \boldsymbol{\Pi}^{\nabla,E}_k) \mathbf{u}_I, (\mathbf{I} - \boldsymbol{\Pi}^{\nabla,E}_k) \mathbf{v}_h\big) \big] \big| \nonumber \\
		&= \big| \sum_{E \in \Omega_h} \big[ \big( \mu(\phi) (\varepsilon(\mathbf{u}) - \boldsymbol{\Pi}^{0,E}_{k-1} \varepsilon(\mathbf{u})), \varepsilon(\mathbf{v}_h) \big) + \big( \mu(\phi) \boldsymbol{\Pi}^{0,E}_{k-1} \varepsilon(\mathbf{u}), \varepsilon(\mathbf{v}_h) -   \boldsymbol{\Pi}^{0,E}_{k-1} \varepsilon(\mathbf{v}_h)  \big) + \nonumber \\ & \qquad \big( (\mu(\phi)- \mu(\Pi^{0,E}_k \phi_h)) \boldsymbol{\Pi}^{0,E}_{k-1} \varepsilon(\mathbf{u}), \boldsymbol{\Pi}^{0,E}_{k-1} \varepsilon(\mathbf{v}_h)\big) + \big( \mu(\Pi^{0,E}_k \phi_h) \boldsymbol{\Pi}^{0,E}_{k-1} (\varepsilon(\mathbf{u})-\varepsilon(\mathbf{u}_I)), \boldsymbol{\Pi}^{0,E}_{k-1} \varepsilon(\mathbf{v}_h)\big)  \nonumber \\ & \qquad  -\mu(\Pi^{0,E}_0 \phi_h) S^E_\nabla \big( (\mathbf{I} - \boldsymbol{\Pi}^{\nabla,E}_k) \mathbf{u}_I, (\mathbf{I} - \boldsymbol{\Pi}^{\nabla,E}_k) \mathbf{v}_h\big) \big] \big| \nonumber \\
		& =: |a_1 + a_2 + a_3 + a_4 +a_5|. \label{sterr-5}
	\end{align}
	Applying the Cauchy-Schwarz inequality, the assumption \eqref{mu-reg} and Lemma \ref{lemmaproj1}, we infer
	\begin{align}
		|a_1| &\lesssim \sum_{E \in \Omega_h }  \mu_{max} \|\varepsilon(\mathbf{u}) - \boldsymbol{\Pi}^{0,E}_{k-1} \varepsilon(\mathbf{u})\|_{0,E} \|\varepsilon(\mathbf{v}_h) \|_{0,E} \nonumber \\
		&  \lesssim \sum_{E \in \Omega_h }  \frac{h^k_E \mu_{max}}{\sqrt{\mu_{min}}}  \| \mathbf{u} \|_{k+1,E} \vertiii{(\mathbf{v}_h, q_h)}_{h,E}. \label{sterr-6}
	\end{align} 
	Concerning $a_2$, we use the definition of $\boldsymbol{\Pi}^{0,E}_{k-1}$ as follows:
	\begin{align*}
		a_2=  \sum_{E \in \Omega_h} \Big( \mu(\phi) \boldsymbol{\Pi}^{0,E}_{k-1} \varepsilon(\mathbf{u}) - \boldsymbol{\Pi}^{0,E}_{k-1}(\mu(\phi) \varepsilon(\mathbf{u})), \varepsilon(\mathbf{v}_h)  - \boldsymbol{\Pi}^{0,E}_{k-1} \varepsilon(\mathbf{v}_h) \Big).
	\end{align*}
	Using the triangle inequality, the assumption \eqref{mu-reg} and Lemmas \ref{lemmaproj1} and \ref{lemmaproj2}, we obtain
	\begin{align}
		|a_2| &\lesssim \sum_{E \in \Omega_h} \| \mu(\phi) \boldsymbol{\Pi}^{0,E}_{k-1} \varepsilon(\mathbf{u}) - \boldsymbol{\Pi}^{0,E}_{k-1}(\mu(\phi) \varepsilon(\mathbf{u}))\|_{0,E} \|\varepsilon(\mathbf{v}_h)  - \boldsymbol{\Pi}^{0,E}_{k-1} \varepsilon(\mathbf{v}_h) \|_{0,E} \nonumber \\
		&\lesssim \sum_{E \in \Omega_h} \big[ \| \mu(\phi) \boldsymbol{\Pi}^{0,E}_{k-1} \varepsilon(\mathbf{u}) - \mu(\phi) \varepsilon(\mathbf{u})\|_{0,E} + \|\mu(\phi) \varepsilon(\mathbf{u})  - \boldsymbol{\Pi}^{0,E}_{k-1}(\mu(\phi) \varepsilon(\mathbf{u}))\|_{0,E}   \big] \|\varepsilon(\mathbf{v}_h) \|_{0,E} \nonumber \\
		&\lesssim \sum_{E \in \Omega_h} \big[ \mu_{max} h^k_E \| \mathbf{u}\|_{k+1,E} + h^k_E |\mu(\phi) \varepsilon(\mathbf{u})|_{k,E} \big] \|\nabla \mathbf{v}_h \|_{0,E} \nonumber \\
		&\lesssim \sum_{E \in \Omega_h} \frac{h^k_E}{\sqrt{\mu_{min}}} \big[ \mu_{max}  \| \mathbf{u}\|_{k+1,E} +  \|\mu \|_{k,\infty,E} \|\mathbf{u} \|_{k+1,E} \big] \vertiii{(\mathbf{v}_h,q_h)}_{h,E}. \label{sterr-7}
	\end{align}
	Employing the Lipschitz continuity of $\mu$ and Remark \ref{stabilityP}, it gives
	\begin{align}
		|a_3| &\lesssim \sum_{E \in \Omega_h } \|\mu(\phi)- \mu(\Pi^{0,E}_k \phi_h)\|_{0,6,E} \|\boldsymbol{\Pi}^{0,E}_{k-1} \varepsilon(\mathbf{u})\|_{0,3,E} \|\boldsymbol{\Pi}^{0,E}_{k-1} \varepsilon(\mathbf{v}_h) \|_{0,E} \nonumber \\ 
		&\lesssim \sum_{E \in \Omega_h } L_\mu \|\phi- \Pi^{0,E}_k \phi_h\|_{0,6,E} \| \varepsilon(\mathbf{u})\|_{0,3,E} \| \varepsilon(\mathbf{v}_h) \|_{0,E} \nonumber \\ 
		&\lesssim \sum_{E \in \Omega_h } L_\mu  \big[ \|\phi- \Pi^{0,E}_k \phi\|_{0,6,E} + \| \Pi^{0,E}_k (\phi - \phi_h)\|_{0,6,E} \big] \| \nabla \mathbf{u}\|_{0,3,E} \| \nabla \mathbf{v}_h \|_{0,E} \nonumber \\
		&\lesssim \sum_{E \in \Omega_h } L_\mu  \big[ h_E^{k}|\phi|_{k,6,E} + \| \phi - \phi_h\|_{0,6,E} \big] \| \nabla \mathbf{u}\|_{0,3,E}\| \nabla \mathbf{v}_h \|_{0,E}. \nonumber 
		\intertext{Applying the H$\ddot{\text{o}}$lder inequality and the Sobolev embedding theorem $H^{k+1}(\Omega) \hookrightarrow W^{k}_4(\Omega)$, yields}
		|a_3| & \lesssim  L_\mu  \big[ h^{k} |\phi|_{k,6,\Omega} + \| \phi - \phi_h\|_{0,6,\Omega} \big] \| \nabla \mathbf{u}\|_{0,3,\Omega} \| \nabla \mathbf{v}_h \|_{0,\Omega} \nonumber  \\
		& \lesssim  L_\mu  \big[ h^{k}\|\phi\|_{k+1,\Omega} + \| \phi - \phi_h\|_{1,\Omega} \big] \| \nabla \mathbf{u}\|_{0,3,\Omega} \| \nabla \mathbf{v}_h \|_{0,\Omega} \nonumber  \\
		& \lesssim \frac{L_\mu}{\sqrt{\mu_{min}}} \big[ h^{k} \|\phi\|_{k+1,\Omega} + \| \nabla (\phi_h - \phi_I)\|_{0,\Omega} \big]\| \nabla \mathbf{u}\|_{0,3,\Omega} \vertiii{(\mathbf{v}_h, q_h)}_{h}. \label{sterr-8}
	\end{align}
	Following the estimation of $a_1$ and Lemma \ref{lemmaproj2}, we have
	\begin{align}
		|a_4| &\lesssim \sum_{E \in \Omega_h }  \mu_{max} \| \boldsymbol{\Pi}^{0,E}_{k-1}(\varepsilon(\mathbf{u}) - \varepsilon(\mathbf{u}_I))\|_{0,E} \|\boldsymbol{\Pi}^{0,E}_{k-1}\varepsilon(\mathbf{v}_h) \|_{0,E} \nonumber \\
		&  \lesssim \sum_{E \in \Omega_h }  \frac{h^k_E \mu_{max}}{\sqrt{\mu_{min}}}  \| \mathbf{u} \|_{k+1,E} \vertiii{(\mathbf{v}_h, q_h)}_{h,E}. \label{sterr-9a}
	\end{align} 
	Concerning $a_5$, we use \eqref{vem-a}, Lemma \ref{inverse}, the stability of the projectors and Lemmas \ref{lemmaproj1} and \ref{lemmaproj2}:
	\begin{align}
		|a_5| &\lesssim \sum_{E \in \Omega_h} \theta_1^\ast \mu_{max} \|\nabla (\mathbf{I} - \boldsymbol{\Pi}^{\nabla,E}_k) \mathbf{u}_I\|_{0,E} \|\nabla (\mathbf{I} - \boldsymbol{\Pi}^{\nabla,E}_k) \mathbf{v}_h\|_{0,E}  \nonumber \\
		&\lesssim \sum_{E \in \Omega_h} \theta_1^\ast \mu_{max} \big[\|\nabla (\mathbf{u} - \mathbf{u}_I ) \|_{0,E} + \|\nabla (\mathbf{I} - \boldsymbol{\Pi}^{\nabla,E}_k) \mathbf{u}\|_{0,E} + \|\nabla \boldsymbol{\Pi}^{\nabla,E}_k (\mathbf{u} - \mathbf{u}_I)\|_{0,E} \big] \|\nabla \mathbf{v}_h\|_{0,E}  \nonumber \\
		&\lesssim \sum_{E \in \Omega_h} \theta_1^\ast \mu_{max} \big[h^k_E \|\mathbf{u} \|_{k+1,E}  + h^{-1}_E \|\boldsymbol{\Pi}^{\nabla,E}_k (\mathbf{u} - \mathbf{u}_I)\|_{0,E} \big] \|\nabla \mathbf{v}_h\|_{0,E}  \nonumber \\
		&\lesssim \sum_{E \in \Omega_h} \frac{h^k_E \mu_{max}}{\sqrt{\mu_{min}}} \|\mathbf{u}\|_{k+1,E} \vertiii{(\mathbf{v}_h,q_h)}_{h,E}. \label{sterr-9}
	\end{align}
	$\bullet$ Concerning $\eta_{A_V,b1}$, we utilize the definition of $\Pi^{0,E}_{k-1}$ and add and subtract appropriate terms as follows:
	\begin{align}
		\eta_{A_V,b1} &= \big| \sum_{E \in \Omega_h} \big[ \big( \nabla \cdot \mathbf{v}_h, p \big) - \big( {\Pi}^{0,E}_{k-1}\nabla \cdot \mathbf{v}_h, \Pi^{0,E}_k p_I\big)  \big] \big| = \big| \sum_{E \in \Omega_h} \big[ \big( \nabla \cdot \mathbf{v}_h, p \big) - \big( \nabla \cdot \mathbf{v}_h, \Pi^{0,E}_{k-1} p_I\big)  \big] \big| \nonumber \\ 
		& = \big| \sum_{E \in \Omega_h} \big[ \big( \nabla \cdot \mathbf{v}_h, p - \Pi^{0,E}_{k-1} p \big) + \big( \nabla \cdot \mathbf{v}_h, \Pi^{0,E}_{k-1} (p-p_I)\big)  \big] \big| \nonumber \\
		&\lesssim \sum_{E \in \Omega_h}  \big[ \| \nabla \mathbf{v}_h\|_{0,E} \|p-\Pi^{0,E}_{k-1} p\|_{0,E} + \|  \nabla \mathbf{v}_h\|_{0,E} \|\Pi^{0,E}_{k-1} (p-p_I)\|_{0,E}  \big] \nonumber \\
		& \lesssim \sum_{E \in \Omega_h } \frac{h^k_E}{\sqrt{\mu_{min}}} \|p\|_{k,E} \vertiii{(\mathbf{v}_h, q_h)}_{h,E}. \label{sterr-10}
	\end{align}
	$\bullet$ Following the estimation of $\eta_{A_V,b1}$ and Lemma \ref{inverse}, it holds that
	\begin{align}
		\eta_{A_V,b2} 
		&= \big|  \sum_{E \in \Omega_h} \big[ \big( \nabla \cdot \mathbf{u} - \nabla \cdot \mathbf{u}_I, q_h\big) + \big( \nabla \cdot \mathbf{u}_I, q_h-\Pi^{0,E}_{k-1} q_h  \big) \big] \big|  \nonumber \\
		&= \big| \sum_{E \in \Omega_h} \big[ \big( \nabla \cdot \mathbf{u} - \nabla \cdot \mathbf{u}_I, q_h\big) + \big( \nabla \cdot \mathbf{u}_I - \Pi^{0,E}_{k-1} \nabla \cdot \mathbf{u}_I, q_h-\Pi^{0,E}_{k-1} q_h \big) \big] \big| \nonumber \\
		& \lesssim \sum_{E \in \Omega_h} \big[ h^k_E \| \mathbf{u}\|_{k+1,E} \|q_h\|_{0,E} + h^{k+1}_E \|\mathbf{u}\|_{k+1,E} \| \nabla q_h\|_{0,E} \big] \nonumber \\
		& \lesssim \sum_{E \in \Omega_h}  h^k_E \| \mathbf{u}\|_{k+1,E} \vertiii{(\mathbf{v}_h, q_h)}_{h,E} . \label{sterr-11}
	\end{align}
	Adding the estimates \eqref{sterr-6}--\eqref{sterr-11}, it satisfies
	\begin{align}
		\eta_{A_V} & \lesssim \Big[h^k \big( 1 + \frac{\mu_{max} + \max_{E\in\Omega_h} \|\mu\|_{k,\infty, E}}{\sqrt{\mu_{min}}}\big) \|u\|_{k+1,\Omega} + \frac{h^k}{\sqrt{\mu_{min}}} \|p\|_{k,\Omega}\, + \nonumber \\ & \qquad  \frac{L_\mu}{\sqrt{\mu_{min}}} \big( h^{k}\|\phi\|_{k+1,\Omega} + \| \nabla (\phi_h - \phi_I)\|_{0,\Omega} \big) {\| \nabla \mathbf{u}\|_{0,3,\Omega}} \Big]\vertiii{(\mathbf{v}_h, q_h)}_h. \label{eta-Av}
	\end{align}
	\textbf{Step 3.} Concerning the third term of \eqref{sterr-2}, we proceed as follows
	\begin{align}
		\eta_{\mathcal{L}_h} &= |\mathcal{L}_h[(\mathbf{u}_I, p_I), (\mathbf{v}_h, q_h)]| \lesssim   |\mathcal{L}_{1,h}(\mathbf{u}_I, \mathbf{v}_h)| + |\mathcal{L}_{2,h}(p_I, q_h)| \nonumber \\
		&=: \eta_{\mathcal{L}_{h1}} + \eta_{\mathcal{L}_{h2}}. \label{sterr-12}
	\end{align} 
	$\bullet$  We use the triangle inequality and Lemmas \ref{lemmaproj1} and \ref{lemmaproj2}:
	\begin{align}
		\eta_{\mathcal{L}_{h1}} &= \sum_{E \in \Omega_h} \tau_{1,E} \big[ \big(\widehat{r}_h(\nabla \cdot\mathbf{u}_I),  \widehat{r}_h(\nabla \cdot \mathbf{v}_h)\big) + S^E_\nabla \big((\mathbf{I}-  \boldsymbol{\Pi}^{\nabla,E}_{k}) \mathbf{u}_I, (\mathbf{I}-  \boldsymbol{\Pi}^{\nabla,E}_{k}) \mathbf{v}_h\big) \big] \nonumber \\
		&\lesssim  \sum_{E \in \Omega_h} \tau^{1/2}_{1,E} \big[ \| \widehat{r}_h(\nabla \cdot e^\mathbf{u}_I)\|_{0,E} + \|\widehat{r}_h(\nabla \cdot \mathbf{u})\|_{0,E} + \theta_1^\ast \|\nabla(\mathbf{I}-  \boldsymbol{\Pi}^{\nabla,E}_{k}) \mathbf{u}_I\|_{0,E} \big] \vertiii{(\mathbf{v}_h, q_h)}_{h,E}  \nonumber \\
		&\lesssim \sum_{E \in \Omega_h} \tau^{1/2}_{1,E} \big[ \|\nabla e^\mathbf{u}_I\|_{0,E} + \|(\mathbf{I} -  \boldsymbol{\Pi}^{0,E}_k)\nabla \cdot \mathbf{u}\|_{0,E} + \|(\mathbf{I} -  \boldsymbol{\Pi}^{0,E}_{k-1}) \nabla \cdot \mathbf{u}\|_{0,E} \,+ \nonumber \\ &\qquad \qquad \quad \theta_1^\ast h^k_E \| \mathbf{u}\|_{k+1,E} \big]\vertiii{(\mathbf{v}_h, q_h)}_{h,E}\nonumber \\
		&\lesssim \sum_{E \in \Omega_h} h^k_E \tau^{1/2}_{1,E}  \| \mathbf{u}\|_{k+1,E} \vertiii{(\mathbf{v}_h, q_h)}_{h,E}. \label{sterr-13}
	\end{align}
	$\bullet$  Following the estimation of $\eta_{\mathcal{L}_{h1}}$, we infer  
	\begin{align}
		\eta_{\mathcal{L}_{h2}} &\lesssim \sum_{E \in \Omega_h} \tau^{1/2}_{2,E} h^{k-1}_E \|p\|_{k,E} \vertiii{(\mathbf{v}_h, q_h)}_{h,E}. \label{sterr-14}    
	\end{align}
	Combining the results \eqref{sterr-13} and \eqref{sterr-14}, it holds that
	\begin{align}
		\eta_{\mathcal{L}_h} \lesssim h^k \big( \max_{E\in\Omega_h} \tau_{1,E}^{1/2} \| \mathbf{u}\|_{k+1,\Omega} + h^{-1} \max_{E\in\Omega_h}  \tau_{2,E}^{1/2} \|p\|_{k,\Omega}\big)\vertiii{(\mathbf{v}_h, q_h)}_{h}. \label{eta-lh}
	\end{align}
	Finally, combining the results \eqref{sterr-3}, \eqref{eta-Av} and \eqref{eta-lh}, we readily obtain the estimate \eqref{lm-err1f}. 
\end{proof}
\begin{remark}
	Form \eqref{eta-lh}, it is notable that the stabilization parameter $ \tau_{2,E}$ must be chosen as $\tau_{2,E} \sim h^2_E $ to achieve optimal convergence estimates in the energy norm, this choice is once again similar to Remark \ref{inf11}. Furthermore, the error estimate \eqref{lm-err1f} becomes quasi-uniform in the energy norm as $\mu \rightarrow 0$, which is similar to \cite{mfem43,mfem31,vem028}. The investigation of uniform error estimates in the energy norm will be the focus of future work.
	
\end{remark}
\begin{lemma} \label{lm-err2}
	{	Under the assumptions \textbf{(A1)} and \textbf{(A2)}, let  $ \phi \in  \Sigma$ and $ \phi_h \in \Sigma_h$ be the solution of the problems \eqref{dvariation-2} and \eqref{dd_couple-2}, respectively, for given $\mathbf{u} \in \mathbf{V}$ and $\mathbf{u}_h \in \mathbf{V}_h$ with $\phi \in \widehat{\Sigma}$ and $\phi_h \in \widehat{\Sigma}_h$.} Then, the following estimate holds:
	\begin{align} 
		\vertiii{\phi_h-\phi_I}_{\Sigma_h} & \lesssim \dfrac{1}{C_\theta }\Big[ h^k \big(\sqrt{\kappa} + \max_{E\in\Omega_h} \tau_{3,E}^{1/2}\big) \| \phi \|_{k+1,\Omega}  + \frac{h^k}{\sqrt{\kappa}} \big[ h \|g\|_{k,\Omega} + \big( \|\mathbf{u}\|_{k,\Omega}  + \| \nabla \mathbf{u}\|_{0,\Omega}\big) \|\phi\|_{k+1,\Omega}  \nonumber\\ & + \| \nabla \phi\|_{0,\Omega} \| \mathbf{u} \|_{k+1,\Omega}  \big] + \frac{1}{\sqrt{\kappa}} \| \nabla \phi\|_{0,\Omega} \|\nabla (\mathbf{u}_h - \mathbf{u}_I) \|_{0,\Omega} \Big]. \label{lm-err2f} 
	\end{align}
\end{lemma}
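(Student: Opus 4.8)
The plan is to follow the structure of the proof of Lemma~\ref{lm-err1}, now exploiting the discrete coercivity \eqref{dd2-3} of the form $A_{T,h}+\mathcal{L}_{3,h}$ with constant $C_\theta$. Setting $e^\phi_h=\phi_h-\phi_I$, I would start from
\begin{align*}
	C_\theta \vertiii{e^\phi_h}^2_{\Sigma_h} \lesssim A_{T,h}(\mathbf{u}_h; e^\phi_h, e^\phi_h) + \mathcal{L}_{3,h}(e^\phi_h, e^\phi_h),
\end{align*}
then use the discrete equation \eqref{dd_couple-2} to replace $A_{T,h}(\mathbf{u}_h;\phi_h,e^\phi_h)+\mathcal{L}_{3,h}(\phi_h,e^\phi_h)$ by $G_h(e^\phi_h)$, and the continuous equation \eqref{dvariation-2} to insert $G(e^\phi_h)=A_T(\mathbf{u};\phi,e^\phi_h)$. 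This yields the splitting
\begin{align*}
	C_\theta \vertiii{e^\phi_h}^2_{\Sigma_h} \lesssim |G_h(e^\phi_h) - G(e^\phi_h)| + |A_T(\mathbf{u};\phi,e^\phi_h) - A_{T,h}(\mathbf{u}_h;\phi_I,e^\phi_h)| + |\mathcal{L}_{3,h}(\phi_I,e^\phi_h)| =: \eta_G + \eta_{A_T} + \eta_{\mathcal{L}_3},
\end{align*}
which I would estimate term by term.

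For $\eta_G$, using the orthogonality of $\Pi^{0,E}_k$ I write $G_h(e^\phi_h)-G(e^\phi_h)=\sum_{E\in\Omega_h}(g-\Pi^{0,E}_{k-1}g,\ \Pi^{0,E}_k e^\phi_h - e^\phi_h)_E$ and apply Lemma~\ref{lemmaproj1} to both factors, obtaining $\eta_G\lesssim h^{k+1}\kappa^{-1/2}\|g\|_{k,\Omega}\vertiii{e^\phi_h}_{\Sigma_h}$. For $\eta_{A_T}$ I split into the diffusive part $a_T(\phi,e^\phi_h)-a_{T,h}(\phi_I,e^\phi_h)$ and the convective part $c^S_T(\mathbf{u};\phi,e^\phi_h)-c^S_{T,h}(\mathbf{u}_h;\phi_I,e^\phi_h)$. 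The diffusive part is handled exactly as $\eta_{A_V,a}$ in Lemma~\ref{lm-err1}: inserting $\boldsymbol{\Pi}^{0,E}_{k-1}\nabla$, using Lemmas~\ref{lemmaproj1} and \ref{lemmaproj2}, the stabilization bounds \eqref{vem-b}, Lemma~\ref{inverse} and Remark~\ref{estimate}, it contributes $\bigl(\sqrt{\kappa}+\kappa^{-1/2}\bigr)h^k\|\phi\|_{k+1,\Omega}$-type terms. For the convective part I add and subtract $c^S_{T,h}(\mathbf{u}_h;\phi,e^\phi_h)$, which isolates a genuine consistency error $c^S_T(\mathbf{u};\phi,e^\phi_h)-c^S_{T,h}(\mathbf{u}_h;\phi,e^\phi_h)$ and the interpolation term $c^S_{T,h}(\mathbf{u}_h;\phi-\phi_I,e^\phi_h)$; in the former I also add and subtract $c^S_{T,h}(\mathbf{u}_I;\phi,e^\phi_h)$, so that the piece $c^S_{T,h}(\mathbf{u}_h-\mathbf{u}_I;\phi,e^\phi_h)$ is bounded --- via its skew-symmetric form, the embedding $H^1(\Omega)\hookrightarrow L^4(\Omega)$, Remark~\ref{stabilityP} and the a priori bound $\vertiii{\phi}_{\Sigma_h}\lesssim C_P\kappa^{-1/2}\|g\|_{0,\Omega}$ from \eqref{dd2-f0} --- by $\kappa^{-1/2}\|\nabla\phi\|_{0,\Omega}\|\nabla(\mathbf{u}_h-\mathbf{u}_I)\|_{0,\Omega}$, while the remaining pieces only involve polynomial projection errors of the exact data $\mathbf{u},\phi$ and, by Remark~\ref{stabilityP} and Lemmas~\ref{lemmaproj1}--\ref{lemmaproj2}, contribute the $h^k\kappa^{-1/2}\bigl(\|\mathbf{u}\|_{k,\Omega}\|\phi\|_{k+1,\Omega}+\|\nabla\phi\|_{0,\Omega}\|\mathbf{u}\|_{k+1,\Omega}\bigr)$ terms. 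Finally, for $\eta_{\mathcal{L}_3}$ I use $\widehat{\mathbf{r}}_h(\nabla\phi)=0$, the triangle inequality, \eqref{vem-b}, Remark~\ref{estimate} and Lemmas~\ref{lemmaproj1}--\ref{lemmaproj2} to get $h^k\max_{E\in\Omega_h}\tau_{3,E}^{1/2}\|\phi\|_{k+1,\Omega}$.

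The main obstacle is the convective term, since $c_T$ is trilinear: isolating the velocity error $\mathbf{u}_h-\mathbf{u}_I$ needs a careful chain of add-and-subtract steps in which every intermediate factor must be controlled purely in terms of data norms, and one has to exploit the skew-symmetric structure of $c^S_{T,h}$ --- integrating by parts on the temperature argument --- to shift derivatives onto factors for which the projection estimates apply, all the while using the $L^q$-stability of $\Pi^{0,E}_k$ (Remark~\ref{stabilityP}) rather than crude $L^2$-bounds so that the embedding constants stay mesh-independent. Once this is in place, summing $\eta_G$, $\eta_{A_T}$ and $\eta_{\mathcal{L}_3}$, dividing by $C_\theta\vertiii{e^\phi_h}_{\Sigma_h}$, and recalling that $\tau_{3,E}$ is of order $h_E^2$ yields precisely \eqref{lm-err2f}.
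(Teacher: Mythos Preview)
Your proposal follows essentially the same strategy as the paper: start from the coercivity \eqref{dd2-3}, use the discrete and continuous equations to obtain the three-term splitting $\zeta_G+\zeta_{A_T}+\zeta_{\mathcal{L}_3}$, and estimate each piece with the projection/interpolation tools. The only substantive difference is the convective decomposition: the paper inserts $c^S_{T,h}(\mathbf{u};\phi,e^\phi_h)$ with the \emph{exact} velocity first, which cleanly produces the consistency term $\zeta_{c,1}=c^S_T(\mathbf{u};\phi,e^\phi_h)-c^S_{T,h}(\mathbf{u};\phi,e^\phi_h)$ (handled by \cite[Lemma~4.3]{mvem9}), the interpolation term $\zeta_{c,2}=c^S_{T,h}(\mathbf{u};e^\phi_I,e^\phi_h)$, and the velocity-error term $\zeta_{c,3}=c^S_{T,h}(\mathbf{u}-\mathbf{u}_h;\phi_I,e^\phi_h)$. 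Your route instead leaves the piece $c^S_{T,h}(\mathbf{u}_h;\phi-\phi_I,e^\phi_h)$, which does \emph{not} ``only involve polynomial projection errors of the exact data'' since $\mathbf{u}_h$ sits in it; you would still need to write $\mathbf{u}_h=(\mathbf{u}_h-\mathbf{u}_I)+\mathbf{u}_I$ there, producing an extra (harmless, higher-order) cross term. The paper's ordering avoids this detour. Two minor slips: the diffusive part contributes only $\sqrt{\kappa}\,h^k\|\phi\|_{k+1,\Omega}$ (no $\kappa^{-1/2}$ there), and the paper takes $\tau_{3,E}\sim h_E$, not $h_E^2$ --- though the lemma itself keeps $\tau_{3,E}$ explicit, so this does not affect the argument.
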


\begin{proof}
	Let $\phi_I \in \Sigma_h$ represent the virtual interpolant  $\phi \in \Sigma$.	For given $\mathbf{u}_h \in \mathbf{V}_h$, we proceed by following the coercivity of $A_{T,h}$
	\begin{align}
		C_\theta \vertiii{\phi_h-\phi_I}^2_{\Sigma_h} &\lesssim A_{T,h}\big( \mathbf{u}_h; \phi_h-\phi_I, e^\phi_h\big) + \mathcal{L}_{3,h}\big( \phi_h-\phi_I, e^\phi_h\big) \nonumber \\
		& \lesssim |G_h(e^\phi_h) - G(e^\phi_h) |+ |A_{T}\big( \mathbf{u}; \phi, e^\phi_h\big) -  A_{T,h}\big( \mathbf{u}_h; \phi_I, e^\phi_h\big)| + |\mathcal{L}_{3,h}\big( \phi_I, e^\phi_h\big) | \nonumber \\
		&=: \zeta_{G} + \zeta_{A_T} + \zeta_{\mathcal{L}_3}. \label{temp-3}
	\end{align}
	We will estimate the above $\zeta$'s in the following steps: \newline
	\textbf{Step 1.} Using the regularity condition on $g$ and Lemma \ref{lemmaproj1}, it holds that
	\begin{align}
		\zeta_{G} &= \big| \sum_{E \in \Omega_h } \big[\big(g, \Pi^{0,E}_k e^\phi_h \big) - \big( g, e^\phi_h\big) \big]\big| =  \big| \sum_{E \in \Omega_h } \big(g - \Pi^{0,E}_k g, \Pi^{0,E}_k e^\phi_h - e^\phi_h\big)\big| \nonumber \\
		& \lesssim  \sum_{E \in \Omega_h } \|g - \Pi^{0,E}_k g\|_{0,E} \|e^\phi_h - \Pi^{0,E}_k e^\phi_h\|_{0,E} \nonumber \\
		& \lesssim \sum_{E \in \Omega_h } \frac{h_E^{k+1}}{\sqrt{\kappa}} \|g\|_{k,E} \vertiii{e^\phi_h}_{\Sigma_h,E}
		\lesssim \frac{h^{k+1}}{\sqrt{\kappa}} \|g\|_{k,\Omega} \vertiii{e^\phi_h}_{\Sigma_h}. \label{temp-4}
	\end{align}
	\textbf{Step 2.} Concerning $\zeta_{A_T}$, we proceed as follows
	\begin{align}
		\zeta_{A_T} &\lesssim | a_T(\phi,e^\phi_h) - a_{T,h}(\phi_I, e^\phi_h)|	+ |c^S_T(\mathbf{u}; \phi, e^\phi_h) - c^S_{T,h}(\mathbf{u}_h; \phi_I, e^\phi_h)| \nonumber \\
		&=: \zeta_{A_T,a} + \zeta_{A_T,c}. \label{temp-5}
	\end{align}
	$\bullet$ We use the definition of $\boldsymbol{\Pi}^{0,E}_{k-1}$, the estimate \eqref{vem-a} and Lemma \ref{lemmaproj1}:
	\begin{align}
		\zeta_{A_T,a} &= \big| \sum_{E \in \Omega_h } \big[ \big( \kappa \nabla \phi, \nabla e^\phi_h\big) - \big( \kappa \boldsymbol{\Pi}^{0,E}_{k-1} \nabla \phi_I, \boldsymbol{\Pi}^{0,E}_{k-1} \nabla e^\phi_h \big) - \kappa S_1\big( (I-\Pi^{\nabla,E}_k) \phi_I, (I- \Pi^{\nabla,E}_k) e^\phi_h \big) \big] \big| \nonumber \\
		& = \Big| \sum_{E \in \Omega_h } \big[ \kappa \big( \nabla \phi - \boldsymbol{\Pi}^{0,E}_{k-1} \nabla \phi_I, \nabla e^\phi_h\big)  - \kappa S_1\big( (I-\Pi^{\nabla,E}_k) \phi_I, (I- \Pi^{\nabla,E}_k) e^\phi_h \big) \big] \Big|  \nonumber \\ 
		& \lesssim \sum_{E \in \Omega_h } \big[ \kappa \| \nabla \phi - \boldsymbol{\Pi}^{0,E}_{k-1} \nabla \phi_I\|_{0,E} \|\nabla e^\phi_h \|_{0,E}  + \kappa \theta_2^\ast \|\nabla(I-\Pi^{\nabla,E}_k) \phi_I\|_{0,E} \| \nabla(I- \Pi^{\nabla,E}_k) e^\phi_h \|_{0,E} \big]  \nonumber \\ 
		& \lesssim \sum_{E \in \Omega_h } \big[ \kappa h^k_E \| \phi \|_{k+1,E}\|\nabla e^\phi_h \|_{0,E}  + \kappa \theta_2^\ast h^k_E \|\phi\|_{k+1,E} \| \nabla e^\phi_h \|_{0,E} \big]  \nonumber \\ 
		& \lesssim \sum_{E \in \Omega_h } \sqrt{\kappa} h^k_E \| \phi \|_{k+1,E} \vertiii{ e^\phi_h }_{\Sigma_h,E} \lesssim \sqrt{\kappa} h^k \| \phi \|_{k+1,\Omega} \vertiii{ e^\phi_h }_{\Sigma_h}. \label{temp-6} 
	\end{align}
	$\bullet$  {By adding and subtracting appropriate terms and using the skew-symmetry of the trilinear form $c_{T,h}^S(\cdot; \cdot,\cdot)$, we have}
	\begin{align}
		\zeta_{A_T,c}&= |c^S_{T}(\mathbf{u}; \phi, e^\phi_h) - c^S_{T,h}(\mathbf{u}_h; \phi_I, e^\phi_h) | \nonumber\\
		&=|c^S_{T}(\mathbf{u}; \phi, e^\phi_h) - c^S_{T,h}(\mathbf{u}; \phi, e^\phi_h)+ c^S_{T,h}(\mathbf{u}; \phi, e^\phi_h) - c^S_{T,h}(\mathbf{u}_h; \phi_I, e^\phi_h) | \nonumber\\
		&=|c^S_{T}(\mathbf{u}; \phi, e^\phi_h) - c^S_{T,h}(\mathbf{u}; \phi, e^\phi_h)+ c^S_{T,h}(\mathbf{u}; e^\phi_I, e^\phi_h) + c^S_{T,h}(\mathbf{u}-\mathbf{u}_h; \phi_I, e^\phi_h) | \nonumber\\
		&\lesssim |c^S_{T}(\mathbf{u}; \phi, e^\phi_h) - c^S_{T,h}(\mathbf{u}; \phi, e^\phi_h)| + | c^S_{T,h}(\mathbf{u}; e^\phi_I, e^\phi_h) | + |c^S_{T,h}( \mathbf{u}-\mathbf{u}_h; \phi_I, e^\phi_h) | \nonumber\\
		&=: \zeta_{c,1} + \zeta_{c,2} + \zeta_{c,3}. \label{temp-7}
	\end{align} 
	Regarding $\zeta_{c,1}$, we utilize \cite[Lemma 4.3]{mvem9}:
	\begin{align}
		\zeta_{c,1} &\lesssim \frac{h^k}{\sqrt{\kappa}} \big[  \|\mathbf{u}\|_{k,\Omega}  \|\phi\|_{k+1,\Omega} +  \| \nabla \mathbf{u}\|_{0,\Omega}  \| \phi \|_{k+1,\Omega} + \| \mathbf{u} \|_{k+1,\Omega}  \| \nabla \phi\|_{0,\Omega} \big] \vertiii{e^\phi_h}_{\Sigma_h}. \label{temp-8}
	\end{align}
	We use Remark \ref{stabilityP}, the Sobolev embedding theorem and Lemma \ref{lemmaproj2}:
	\begin{align}
		\zeta_{c,2} & \lesssim \sum_{E \in \Omega_h }  \big[ \|\boldsymbol{\Pi}^{0,E}_k \mathbf{u}\|_{0,4,E} \|\boldsymbol{\Pi}^{0,E}_{k-1} \nabla e^\phi_I\|_{0,E} \| \Pi^{0,E}_k e^\phi_h\|_{0,4,E} \, +  \|\boldsymbol{\Pi}^{0,E}_k \mathbf{u}\|_{0,4,E}  \| \boldsymbol{\Pi}^{0,E}_{k-1} \nabla e^\phi_h\|_{0,E} \|{\Pi}^{0,E}_k e^\phi_I\|_{0,4,E}\big] \nonumber \\
		& \lesssim \sum_{E \in \Omega_h }  \big[ h^k_E \|\mathbf{u}\|_{0,4,E} \|\phi\|_{k+1,E} \|e^\phi_h\|_{0,4,E} +  \|e^\phi_I\|_{0,4,E} \|\mathbf{u}\|_{0,4,E}  \|  \nabla e^\phi_h\|_{0,E} \big] \nonumber \\
		& \lesssim  \big[ h^k \|\mathbf{u}\|_{0,4,\Omega} \|\phi\|_{k+1,\Omega} \|e^\phi_h\|_{0,4,\Omega} + \|e^\phi_I\|_{0,4,\Omega} \|\mathbf{u}\|_{0,4,\Omega}  \|  \nabla e^\phi_h\|_{0,\Omega} \big] \nonumber \\
		&\lesssim  \frac{h^{k}}{\sqrt{\kappa}} \|\nabla \mathbf{u}\|_{0,\Omega} \| \phi\|_{k+1,\Omega} \vertiii{ e^\phi_h}_{\Sigma_h}. \label{temp-9}
	\end{align}
	Following the estimation of $\zeta_{c,2}$, we arrive at
	\begin{align}
		\zeta_{c,3} &\lesssim \big[ \|(\mathbf{u} - \mathbf{u}_h) \|_{0,4,\Omega} \| \nabla \phi_I\|_{0,\Omega} \| e^\phi_h\|_{0,4,\Omega} +\|(\mathbf{u} - \mathbf{u}_h) \|_{0,4,\Omega} \| \phi_I\|_{0,4,\Omega} \|\nabla e^\phi_h\|_{0,\Omega} \big]  \nonumber \\ 
		& \lesssim  \frac{1}{\sqrt{\kappa}}\big[ \|\nabla (\mathbf{u}_h - \mathbf{u}_I) \|_{0,\Omega} + h^k \|\mathbf{u}\|_{k+1,\Omega} \big] \| \nabla \phi\|_{0,\Omega} \vertiii{e^\phi_h}_{\Sigma_h}. \label{temp-10}
	\end{align}
	Adding the estimates \eqref{temp-7}--\eqref{temp-9}, it yields that
	\begin{align}
		\zeta_{A_T,c} &\lesssim  \Big[\frac{h^k}{\sqrt{\kappa}} \big[\big( \|\mathbf{u}\|_{k,\Omega}   + \| \nabla \mathbf{u}\|_{0,\Omega}\big) \|\phi\|_{k+1,\Omega}  + \| \nabla \phi\|_{0,\Omega} \| \mathbf{u} \|_{k+1,\Omega}  \big] \nonumber \\ & \qquad +  \frac{1}{\sqrt{\kappa}} \| \nabla \phi\|_{0,\Omega} \|\nabla (\mathbf{u}_h - \mathbf{u}_I) \|_{0,\Omega}  \Big]\vertiii{e^\phi_h}_{\Sigma_h}. \label{zeta-ATc}
	\end{align}
	\textbf{Step 3.} Following the estimation of $\eta_{\mathcal{L}_{h1}}$ in \eqref{sterr-13}, it yields that
	\begin{align}
		\zeta_{\mathcal{L}_3} \lesssim \sum_{E \in \Omega_h } h^k_E \tau_{3,E}^{1/2} \|\phi\|_{k+1,E} \vertiii{e^\phi_h}_{\Sigma_h,E} \lesssim  h^k \max_{E\in\Omega_h} \tau_{3,E}^{1/2} \|\phi\|_{k+1,\Omega} \vertiii{e^\phi_h}_{\Sigma_h}. \label{temp-11} 
	\end{align}
	Finally, adding the estimates \eqref{temp-4}, \eqref{temp-6}, \eqref{zeta-ATc} and \eqref{temp-11}, we readily arrive at the result \eqref{lm-err2f}. 
\end{proof}
Hereafter, we show the error estimate for the stabilized coupled problem \eqref{nvem} in the energy norm:
\begin{theorem} \label{lm-err3}
	{Under the assumptions \textbf{(A1)}, \textbf{(A2)} and the assumptions of Theorems \ref{unique-1} and \ref{sunique},} let  $(\mathbf{u}, p, \phi) \in \mathbf{V} \times Q \times \Sigma$ and $(\mathbf{u}_h, p_h, \phi_h) \in \mathbf{V}_h \times Q_h \times \Sigma_h$ be the solution of the problems \eqref{variation-1} and \eqref{nvem}, respectively, with $\phi \in \widehat{\Sigma}$ and $\phi_h \in \widehat{\Sigma}_h$. We assume the stabilization parameters are $\tau_{1,E} \sim \mathcal{O}(1)$, $\tau_{2,E} \sim h^2_E$ and $\tau_{3,E} \sim h_E$ for all $E \in \Omega_h$. Furthermore, we consider there exists a constant $C_\ast$ depending on the constants $\theta^\ast_1$ and $\theta^\ast_2$, the mesh regularity constants, the Poincar$\acute{\text{e}}$ constant $C_P$ and the embedding constant $C_q$ such that if the data satisfies
	\begin{align}
		\widetilde{\beta} \mu_{min} - C_\ast  \frac { \big( C_{\mathbb T}^{-1} + \alpha \rho \|\mathbf{f}\|_{0, \Omega} \big)}{ C_\theta \kappa^{3/2} } >0, \label{err_c}
	\end{align}
	then, we have the following error estimate:
	\begin{align}
		\vertiii{(\mathbf{u}- \mathbf{u}_h, p-p_h)}_h + \vertiii{\phi-\phi_h}_{\Sigma_h}&\lesssim   \mathscr{R} \big(\alpha,\mu, \kappa, C_\theta, L_\mu, \mathbf{f}, g, \mathbf{u}, p, \phi \big) h^k, \label{lm-err3f} 
	\end{align}
	where $\mathscr{R} \big(\alpha,\mu, \kappa, C_\theta, L_\mu, \mathbf{f}, g, \mathbf{u}, p, \phi \big)$ is independent of the mesh size and depends on $\alpha$, $\mu$, $\kappa$, $C_\theta$, $L_\mu$, $\mathbf{f}$, $g$, $\mathbf{u}$, $p$, and $\phi$.
\end{theorem}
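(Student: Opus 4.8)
The plan is to reduce the coupled error bound to the two decoupled estimates of Lemmas \ref{lm-err1} and \ref{lm-err2}, and then to close the resulting coupling by a small-data argument. First I would introduce the virtual interpolants $(\mathbf{u}_I,p_I,\phi_I)\in\mathbf{V}_h\times Q_h\times\Sigma_h$ of $(\mathbf{u},p,\phi)$ from Lemmas \ref{lemmaproj1}--\ref{lemmaproj2} and split, by the triangle inequality,
\[
\vertiii{(\mathbf{u}-\mathbf{u}_h,p-p_h)}_h+\vertiii{\phi-\phi_h}_{\Sigma_h}\ \le\ \big(\vertiii{(e^{\mathbf{u}}_I,e^p_I)}_h+\vertiii{e^\phi_I}_{\Sigma_h}\big)+\big(\vertiii{(e^{\mathbf{u}}_h,e^p_h)}_h+\vertiii{e^\phi_h}_{\Sigma_h}\big).
\]
The interpolation part is handled by the standard VEM approximation machinery: Lemma \ref{lemmaproj2} controls $\|\nabla e^{\mathbf{u}}_I\|_{0,\Omega}$ and $\|\nabla e^\phi_I\|_{0,\Omega}$, Lemma \ref{lemmaproj1} together with the upper bounds in \eqref{vem-a}--\eqref{vem-c} controls the stabilization contributions inside $\vertiii{\cdot}_h$ and $\vertiii{\cdot}_{\Sigma_h}$, the $L^2$-projection of $p$ onto $Q_h$ furnishes $\|e^p_I\|_{0,\Omega}\lesssim h^k\|p\|_{k,\Omega}$, and the prescribed scalings $\tau_{1,E}\sim\mathcal{O}(1)$, $\tau_{2,E}\sim h_E^2$, $\tau_{3,E}\sim h_E$ render each such term of order at least $h^k$.

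Next I would estimate the discrete errors $X:=\vertiii{(e^{\mathbf{u}}_h,e^p_h)}_h$ and $Y:=\vertiii{e^\phi_h}_{\Sigma_h}$ by invoking Lemmas \ref{lm-err1} and \ref{lm-err2} directly, since $(\mathbf{u}_h,p_h)=\mathbf{S}_h(\phi_h)$ and $\phi_h=\mathbb{M}_h(\mathbf{u}_h)$ solve exactly the decoupled problems to which those lemmas apply. Collecting all mesh-independent data factors into quantities $\mathscr{R}_1,\mathscr{R}_2$ and using $\|\nabla e^\phi_h\|_{0,\Omega}\le\kappa^{-1/2}Y$ and $\|\nabla e^{\mathbf{u}}_h\|_{0,\Omega}\le\mu_{min}^{-1/2}X$, the two lemmas become
\[
X\ \le\ \frac{c_1\big(L_\mu\|\nabla\mathbf{u}\|_{0,3,\Omega}+\alpha\|\mathbf{f}\|_{0,\Omega}\big)}{\widetilde{\beta}\sqrt{\kappa\mu_{min}}}\,Y+\mathscr{R}_1\,h^k,\qquad
Y\ \le\ \frac{c_2\,\|\nabla\phi\|_{0,\Omega}}{C_\theta\sqrt{\kappa\mu_{min}}}\,X+\mathscr{R}_2\,h^k,
\]
a $2\times2$ linear system in $(X,Y)$ whose off-diagonal product equals, up to a constant, $\big(L_\mu\|\nabla\mathbf{u}\|_{0,3,\Omega}+\alpha\|\mathbf{f}\|_{0,\Omega}\big)\|\nabla\phi\|_{0,\Omega}\big(\widetilde{\beta}C_\theta\kappa\mu_{min}\big)^{-1}$.

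To close the system I would substitute the $Y$-inequality into the $X$-inequality and absorb the resulting $X$-term on the left, which requires that $1$ minus the coupling product be strictly positive. Here I would use that $\phi\in\widehat{\Sigma}$, so $\|\nabla\phi\|_{0,\Omega}\le\rho/\sqrt{\kappa}$, together with the uniqueness hypothesis \eqref{unq0}, which forces $\rho\big(L_\mu\|\nabla\mathbf{u}\|_{0,3,\Omega}+\alpha(1+C_P)C_q\|\mathbf{f}\|_{0,\Omega}\big)\le C_{\mathbb{T}}^{-1}$; combining these, the coupling product is bounded by $C_\ast\big(C_{\mathbb{T}}^{-1}+\alpha\rho\|\mathbf{f}\|_{0,\Omega}\big)\big(\widetilde{\beta}C_\theta\kappa^{3/2}\mu_{min}\big)^{-1}$ with a constant $C_\ast$ depending only on $\theta_1^\ast,\theta_2^\ast$, the mesh-regularity constants, $C_P$ and $C_q$. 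Assumption \eqref{err_c} is precisely the statement that this quantity is strictly less than $1$, whence $X\lesssim\mathscr{R}\,h^k$, and re-inserting this into the $Y$-inequality gives $Y\lesssim\mathscr{R}\,h^k$; combined with the interpolation bound of the first step this yields \eqref{lm-err3f}. The main obstacle is exactly this closing step: one must carefully bookkeep how the $\widehat{\Sigma}$-constraint on $\phi$ and the uniqueness condition \eqref{unq0} collapse the product of the two coupling constants into the single combination appearing in \eqref{err_c}, and, along the way, verify that the chosen stabilization scalings keep every consistency and stabilization term at order $h^k$ rather than a lower power.
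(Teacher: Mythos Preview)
Your proposal is correct and follows essentially the same route as the paper: split via the triangle inequality into interpolation and discrete parts, invoke Lemmas \ref{lm-err1} and \ref{lm-err2} to obtain two coupled inequalities in $\vertiii{(e^{\mathbf u}_h,e^p_h)}_h$ and $\vertiii{e^\phi_h}_{\Sigma_h}$, substitute one into the other, and then use $\phi\in\widehat\Sigma$ together with \eqref{unq0} to collapse the coupling factor into the combination $C_{\mathbb T}^{-1}+\alpha\rho\|\mathbf f\|_{0,\Omega}$ so that \eqref{err_c} allows absorption. The only cosmetic difference is that the paper carries out the substitution directly rather than writing the $2\times 2$ system explicitly; one small slip is that $p_I$ is the virtual interpolant of Lemma \ref{lemmaproj2}, not the $L^2$-projection, but the required bound $\|p-p_I\|_{0,\Omega}\lesssim h^k\|p\|_{k,\Omega}$ holds either way.
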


\begin{proof}
	Employing the definition of stabilization parameters and Lemma \ref{lm-err1}, we have
	\begin{align}
		\vertiii{(\mathbf{u}_h- \mathbf{u}_I, p_h-p_I)}_h &\lesssim  \frac{1}{\widetilde{\beta}}\Big[ \frac { \big( L_\mu {\|\nabla \mathbf{u}\|_{0,3,\Omega}} + \alpha \|\mathbf{f}\|_{0, \Omega} \big)}{\sqrt{ \kappa \mu_{min}}} \vertiii{\phi_h - \phi_I}_{\Sigma_h} + \mathscr{R} \big(\alpha,\mu, L_\mu, \mathbf{f}, \mathbf{u}, p, \phi \big) h^k \Big]. \label{finl-1}
	\end{align} 
	Using Lemma \ref{lm-err2}, we infer
	\begin{align}
		\vertiii{(\mathbf{u}_h- \mathbf{u}_I, p_h-p_I)}_h &\lesssim  \frac{1}{\widetilde{\beta}}\Big[ \frac { \big( L_\mu {\|\nabla \mathbf{u}\|_{0,3,\Omega}} + \alpha \|\mathbf{f}\|_{0, \Omega} \big)}{ C_\theta \kappa \mu_{min}} \| \nabla \phi\|_{0,\Omega}\vertiii{(\mathbf{u}_h- \mathbf{u}_I, p_h-p_I)}_h \nonumber \\ &  \qquad + \mathscr{R} \big(\alpha,\mu, \kappa,C_\theta, L_\mu, \mathbf{f}, g, \mathbf{u}, p, \phi \big) h^k \Big].
	\end{align} 
	{We now use the estimate \eqref{unq0} and the definition of $\widehat{\Sigma}$, we arrive at
		\begin{align}
			\vertiii{(\mathbf{u}_h- \mathbf{u}_I, p_h-p_I)}_h &\lesssim  \frac{1}{\widetilde{\beta}}\Big[ \frac { \big( C_{\mathbb T}^{-1} + \alpha \rho \|\mathbf{f}\|_{0, \Omega} \big)}{ C_\theta \kappa^{3/2} \mu_{min}} \vertiii{(\mathbf{u}_h- \mathbf{u}_I, p_h-p_I)}_h + \mathscr{R} \big(\alpha,\mu, \kappa,C_\theta, L_\mu, \mathbf{f}, g, \mathbf{u}, p, \phi \big) h^k \Big]. \label{finl-2}
	\end{align} }
	Recalling \eqref{err_c}, we obtain 
	\begin{align}
		\vertiii{(\mathbf{u}_h- \mathbf{u}_I, p_h-p_I)}_h &\lesssim   \mathscr{R} \big(\alpha,\mu, \kappa,C_\theta, L_\mu, \mathbf{f}, g, \mathbf{u}, p, \phi \big) h^k. \label{finl-3}
	\end{align} 
	Therefore, combining Lemma \ref{lm-err2} and \eqref{finl-3}, yields
	\begin{align}
		\vertiii{(\mathbf{u}_h- \mathbf{u}_I, p_h-p_I)}_h + \vertiii{\phi_h-\phi_I}_{\Sigma_h}&\lesssim  \mathscr{R} \big(\alpha,\mu, \kappa, C_\theta, L_\mu, \mathbf{f}, g, \mathbf{u}, p, \phi \big) h^k. \label{lm-err4f}
	\end{align}	
	We now use the definition of $\vertiii{\cdot}_h$ and Lemma \ref{lemmaproj2}:
	\begin{align}
		\vertiii{(\mathbf{u}- \mathbf{u}_I, p-p_I)}_h \lesssim  \mathscr{R}(\mu) h^k \big( \|\mathbf{u} \|_{k+1,\Omega} + \|p\|_{k,\Omega}\big). \label{sterr-1}
	\end{align}	
	It is evident from the definition of $\vertiii{\cdot}_\Sigma$ and Lemmas \ref{lemmaproj1} and \ref{lemmaproj2} that
	\begin{align}
		\vertiii{\phi - \phi_I}_\Sigma \lesssim h^k \mathscr{R}(\kappa) \|\phi\|_{k+1,\Omega}. \label{temp-2}
	\end{align}	
	Finally, combining the results	\eqref{lm-err4f}, \eqref{sterr-1} and \eqref{temp-2}, we readily obtain the result \eqref{lm-err3f}. 
\end{proof}

\section{Numerical results} 
\label{sec:6}

In this section we present four examples to examine the numerical behavior of the proposed stabilized VEM \eqref{nvem}. In addition, we will demonstrate the effectiveness of the proposed scheme in the presence of convection-dominated transport regimes. These examples demonstrate the method's efficiency and underscore its advantages in addressing realistic problems.

\subsection{Meshes and computational error norms} \label{sec-error}
We employ uniform and distorted convex and non-convex meshes to study the numerical experiments. For the first three examples, we utilize meshes $\Omega_1$--$\Omega_4$, as shown in Figure \ref{samp}, with the mesh sizes $h=1/5, 1/10, 1/20, 1/40$ and $1/80$. Further, we use the fixed-point iterative scheme to solve the nonlinear coupled problem \eqref{nvem}. For all examples, the stopping criteria of the iterative scheme is fixed at $10^{-7}$. We investigate the convergence behavior on unit square $\Omega=(0,1)^2$. 
Additionally, we will follow that the stabilization parameters are $\tau_{1,E} \sim \mathcal{O}(1)$, $\tau_{2,E} \sim h_E^2$ and $\tau_{3,E} \sim h_E$.  Our numerical results are discussed for VEM orders $k=1$ and $k=2$.

\begin{figure}[h]
	\centering
	\subfloat[$\Omega_1$]{\includegraphics[height=3cm, width=4cm]{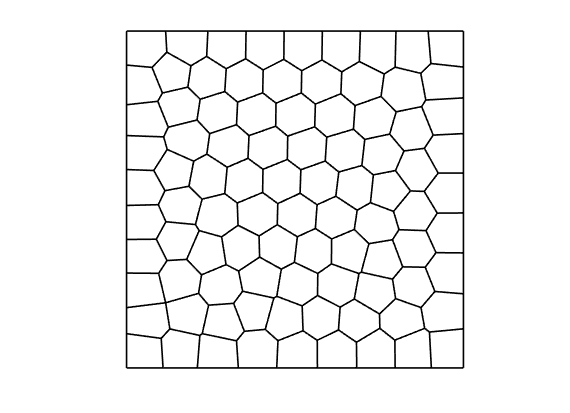}}
	\subfloat[$\Omega_2$]{\includegraphics[height=3cm, width=3cm]{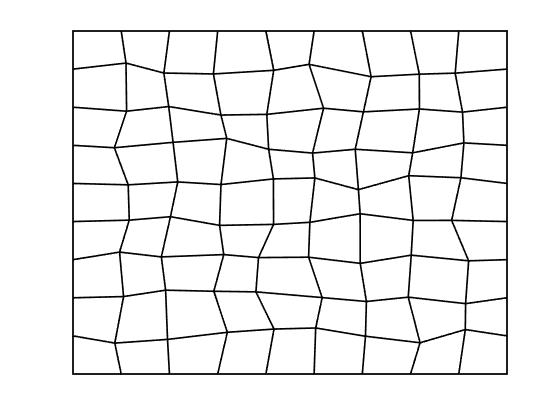}}~~
	\subfloat[$\Omega_3$]{\includegraphics[height=3cm, width=3cm]{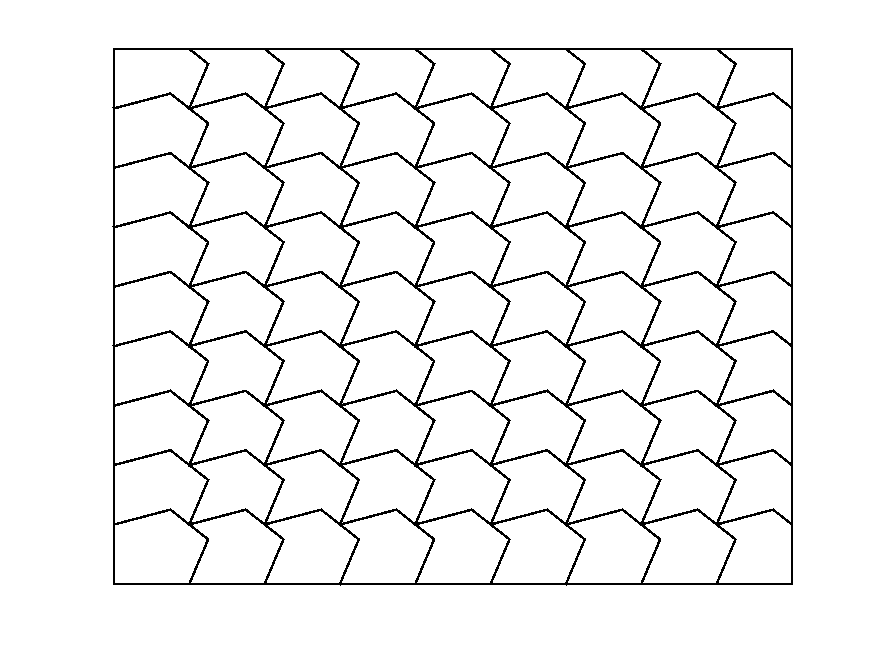}}~~
	\subfloat[$\Omega_4$]{\includegraphics[height=3cm, width=3cm]{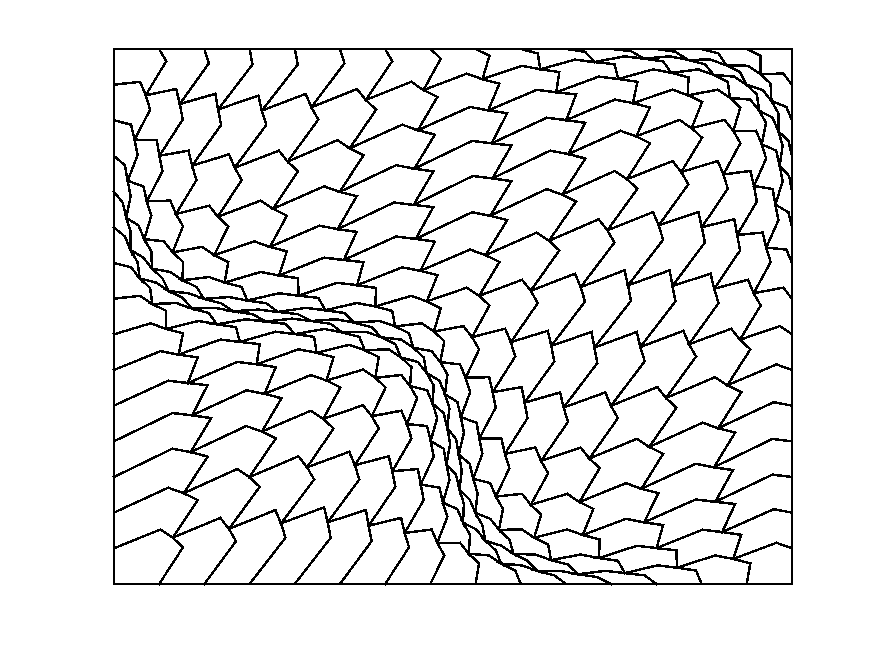}}
	\caption{ Computational meshes.}
	\label{samp} 
\end{figure}

The computable VEM errors, precisely the $H^1(\Omega)$ semi-norm and $L^2(\Omega)$ norm for the velocity vector field, are defined as follows:  \newline

$\bullet$ Velocity $H^1$-semi norm: \qquad	$E^{\mathbf{u}}_{H^1} := \sqrt{\sum\limits_{E\in\Omega_h}  \| \nabla (\mathbf{u} - \boldsymbol{\Pi}^{\nabla,E}_k \mathbf{u}_h ) \|_E^2}$, 

$\bullet$ Velocity $L^2$-norm: \qquad \qquad \,\,\,$ E^\mathbf{u}_{L^2} := \sqrt{\sum\limits_{E\in\Omega_h} \| \mathbf{u}-\boldsymbol{\Pi}^{0,E}_k \mathbf{u}_h\|^2_E}$, \newline
where $\mathbf{u}$ and $\mathbf{u}_h$ represent the exact and VEM-stabilized velocity field, respectively. Following these definitions, the errors between the exact temperature $\phi$ and VEM-stabilized temperature $\phi_h$  are denoted by $E^{\phi}_{H^1}$ in the $H^1$ semi-norm and $E^{\phi}_{L^2}$ in the $L^2$ norm. Additionally, the pressure error in the $L^2(\Omega)$ norm is denoted by $E^{p}_{L^2}$, with $p$ and $p_h$, representing the exact and VEM-stabilized pressures, respectively. 

\begin{figure}[h]
	\centering
	\subfloat[$\Omega_1$]{\includegraphics[height=6cm, width=7.5cm]{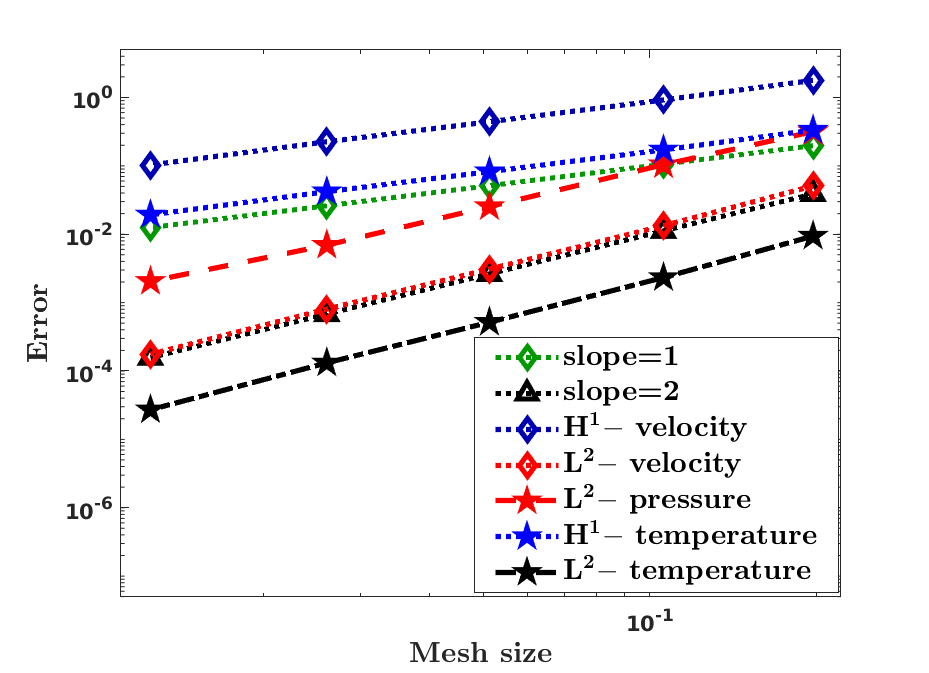}}
	\subfloat[$\Omega_1$]{\includegraphics[height=6cm, width=7.5cm]{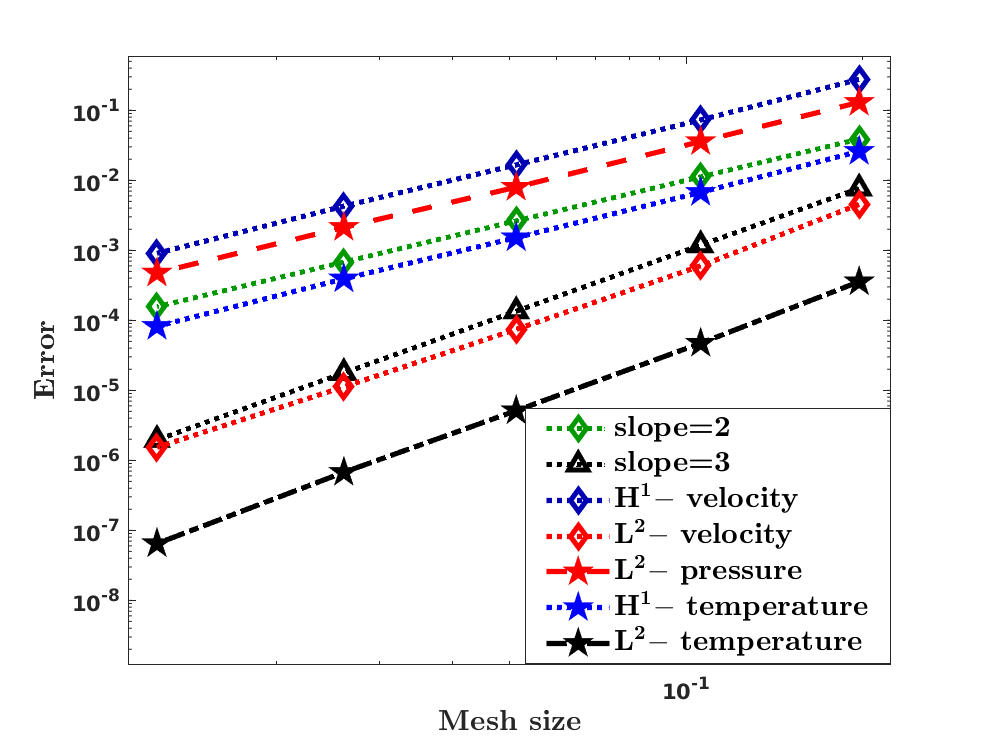}}\\
	\subfloat[$\Omega_2$]{\includegraphics[height=6cm, width=7.5cm]{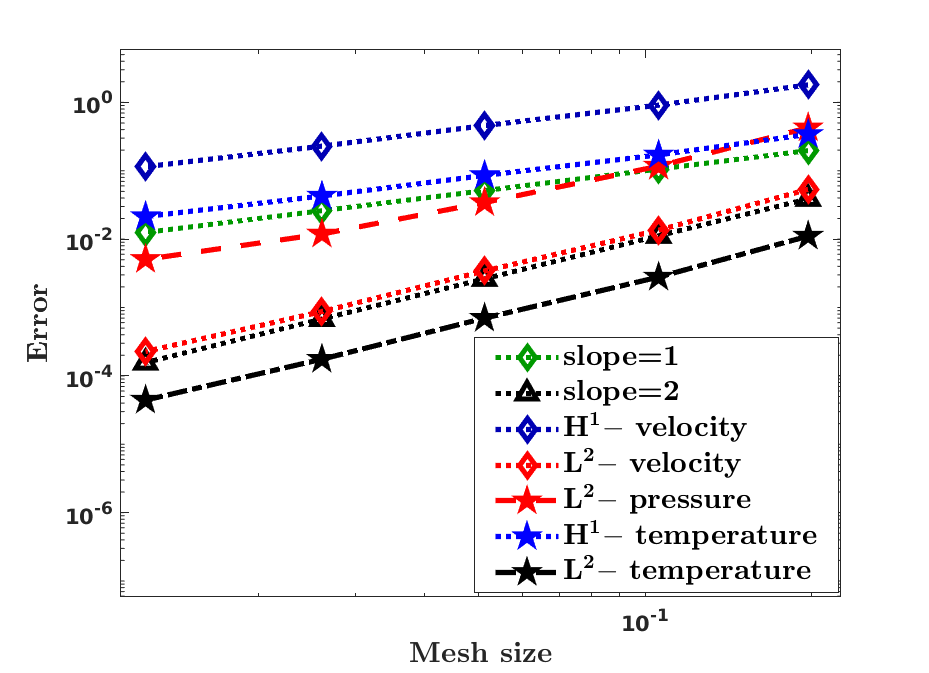}}
	\subfloat[$\Omega_2$]{\includegraphics[height=6cm, width=7.5cm]{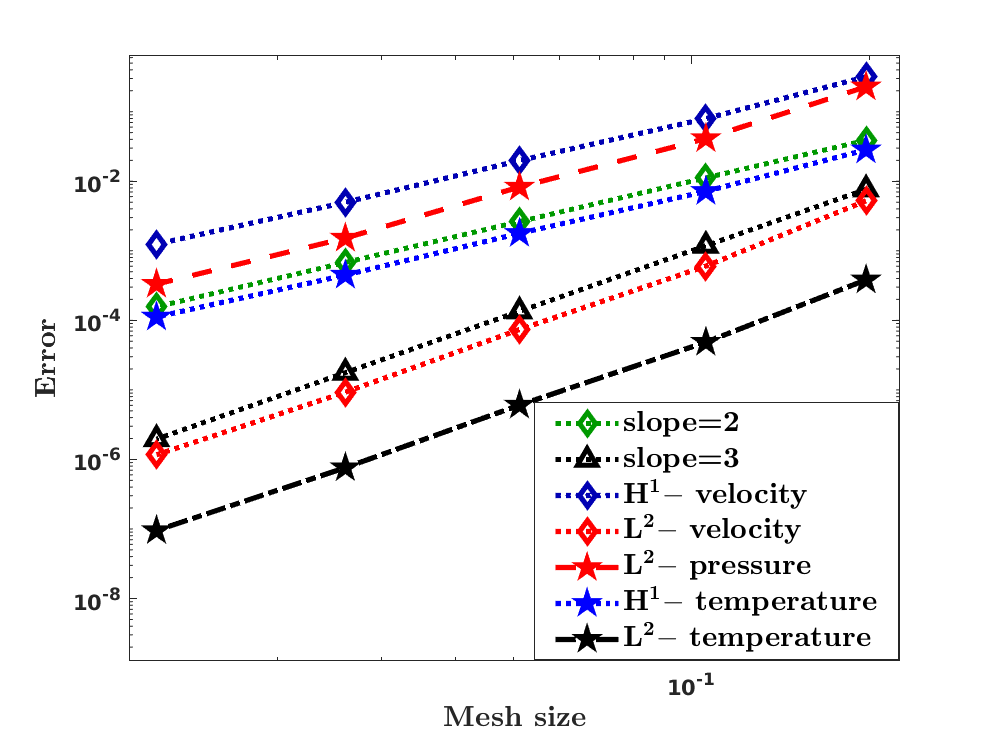}}\\
	\caption{ Example 1. Convergence curves for VEM order $k=1$ (left) and $k=2$ (right).}
	\label{ex1_cng1} 
\end{figure}

\subsection{Example 1}
\label{case1}
In this example, we present a benchmark problem for the convergence study discussed in \cite{mfem39} with a known solution.  We consider the parameters of problem (S) as follows:
\begin{align}
	\mu(r)=\frac{1}{(1-0.5r)^2},  \qquad \alpha=1, \qquad \kappa=1. \nonumber 
\end{align}
We choose the source terms $\mathbf{f}$ and $g$ such that the exact solution of problem (S) is given as follows: 
\begin{empheq}[left= \empheqlbrace]{align}
	\mathbf{u}(x,y) &= \big[ \sin(2\pi x) \cos(2 \pi y), \, -\cos(2\pi x)\sin(2\pi y) \big]^T,\nonumber \\
	p(x,y) &= \sin(2\pi x) \sin(2\pi y), \nonumber\\
	\phi(x,y) &= 15 - 15 \exp(-xy(x-1)(y-1)). \nonumber 
\end{empheq} 
We emphasize that $0 \leq \phi \leq 1$ in $\Omega$, and that $\mu^\prime(r) \leq 8$ for $0<r<1$, this gives the Lipschitz continuity of $\mu$.

We perform Example 1 considering the dominated-diffusion regime for the transport equation using the computational meshes $\Omega_1$ and $\Omega_2$ for VEM order $k=1$ and $2$. In Figure \ref{ex1_cng1}, we present the convergence curves obtained using the proposed VEM. In this context, our primary focus is on validating that the proposed stabilized VEM meets theoretical expectations. For a regular problem, the expected optimal convergence rates are  $\mathcal{O}(h^k)$ for the $H^1$ semi-norm of velocity and temperature, and $\mathcal{O}(h^{k})$ for the $L^2$ norm of the pressure. While we have not explicitly derived error estimates for the velocity and temperature in the $L^2$-norm, but the interpolation estimates suggest the best possibility of the optimal rates in the $L^2$-norm is $\mathcal{O}(h^{k+1})$. This expectation also applies to convection-dominated transport (temperature) problems.

From Figure \ref{ex1_cng1}, we notice that the convergence behavior of the proposed VEM aligns well with  theoretical results for VEM order $k=1$ and $2$. Additionally, we observe the $L^2$-norm of pressure error shows superconvergence behavior {of order $\mathcal{O}(h^2)$} on regular Voronoi mesh $\Omega_1$ with the lowest equal-order VEM {$k=1$}, although we have not derived superconvergence estimates for pressure, whereas it is optimal {of order $\mathcal{O}(h)$} for the distorted mesh $\Omega_2$.

\begin{figure}[h]
	\centering
	\subfloat[$\Omega_3$]{\includegraphics[height=6cm, width=7.5cm]{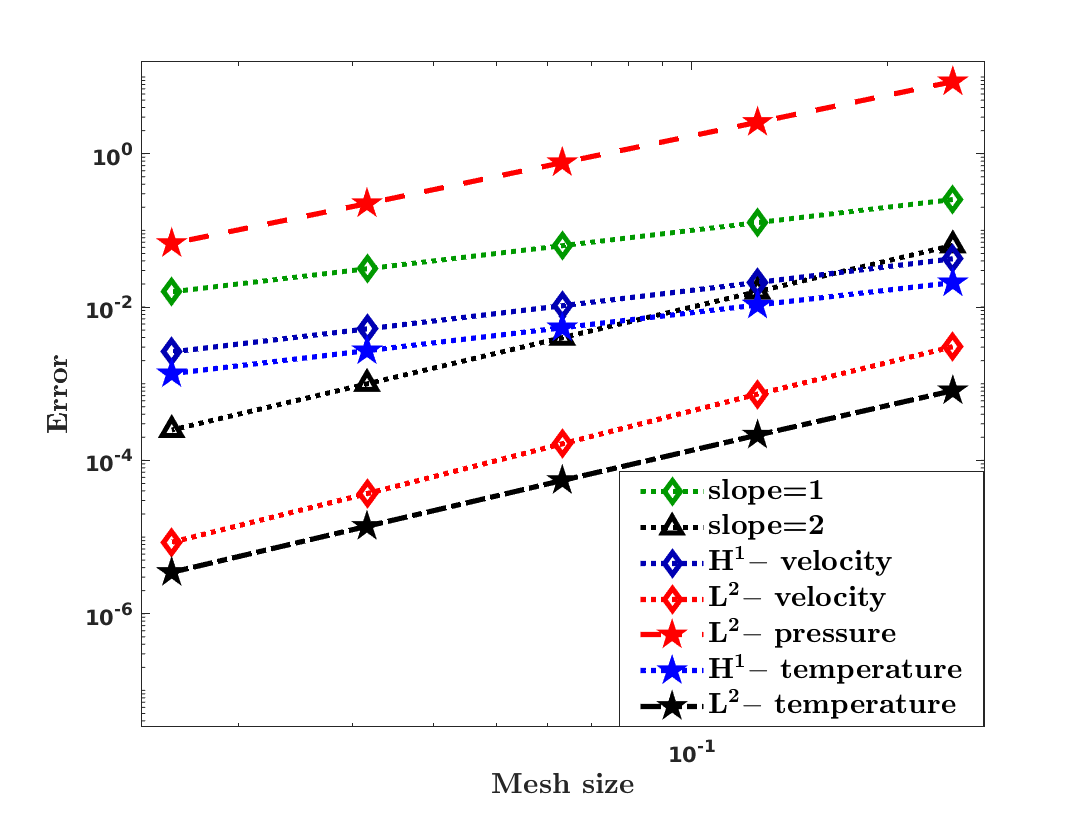}}
	\subfloat[$\Omega_3$]{\includegraphics[height=6cm, width=7.5cm]{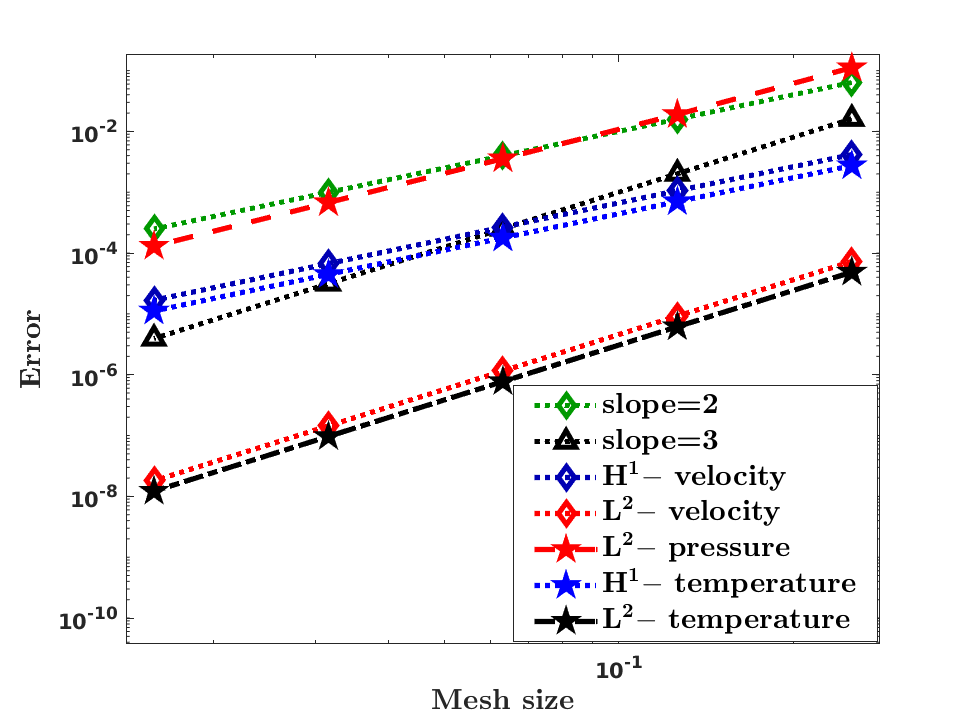}}\\
	\subfloat[$\Omega_4$]{\includegraphics[height=6cm, width=7.5cm]{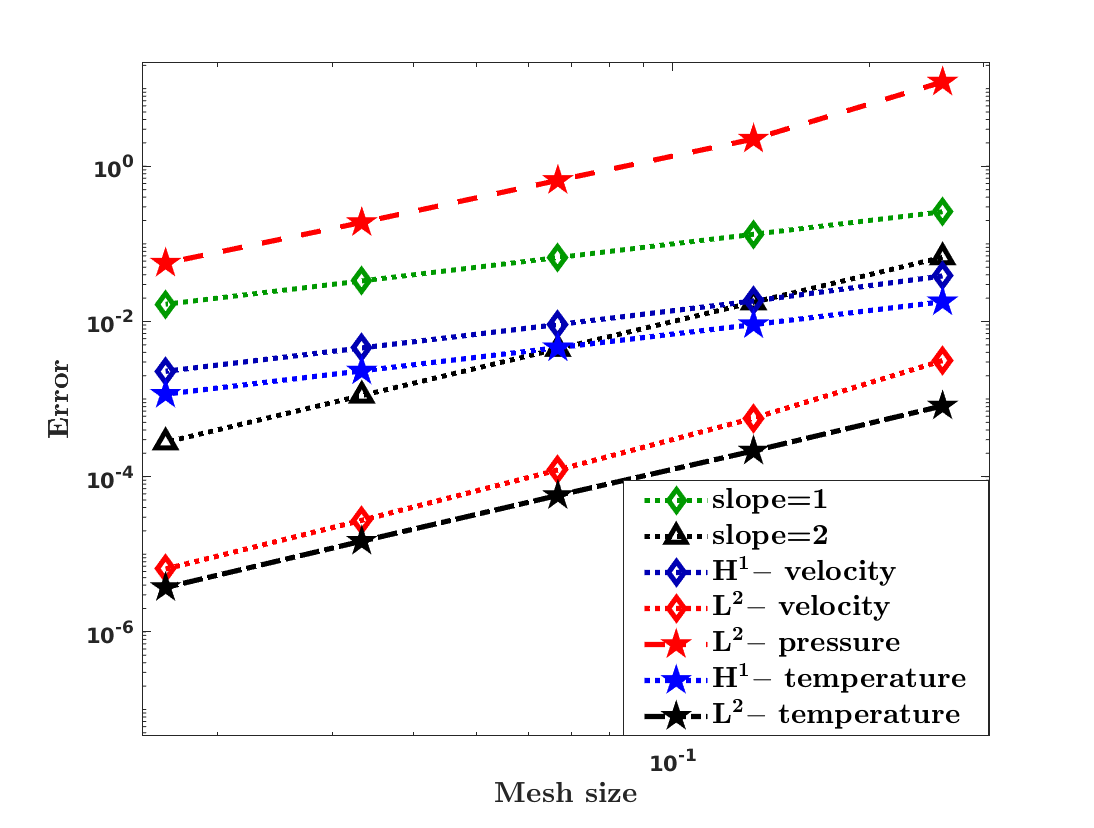}}
	\subfloat[$\Omega_4$]{\includegraphics[height=6cm, width=7.5cm]{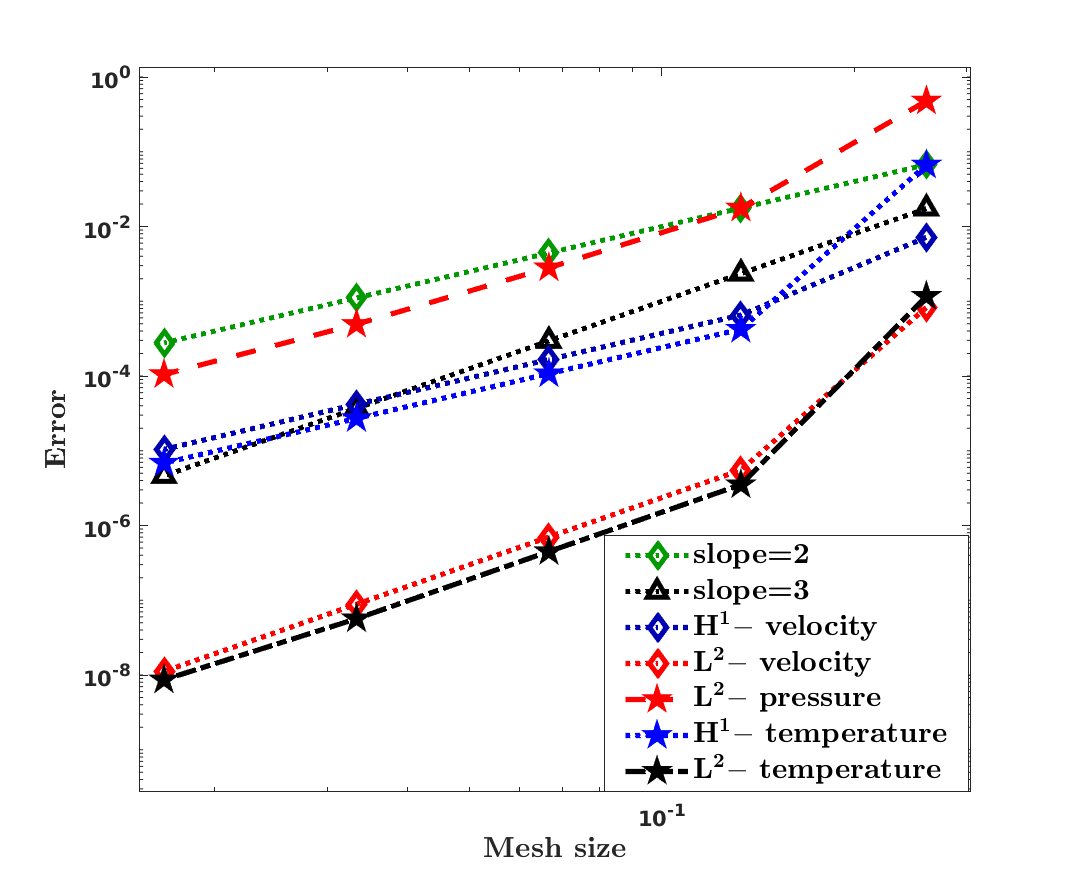}}
	\caption{ Example 2. (Case $\kappa=1$.) Convergence curves for VEM order $k=1$ (left) and $k=2$ (right).}
	\label{ex2_cng1} 
\end{figure}

\subsection{Example 2}
\label{case2}
In the second example, we aim to investigate the convergence behavior of the proposed method by introducing the diffusion-dominated and convection-dominated regimes (critical case) for the transport temperature equation. To do this, we consider the parameters of problem $(S)$ as follows:
\begin{align}
	\mu(\phi)= 1+ \phi + \sin^2(\phi),  \qquad \alpha=1, \qquad \kappa=1\,\, \text{and}\,\, 10^{-6} . \nonumber 
\end{align}
Furthermore, the external loads and boundary conditions are determined by the exact solution of problem $(S)$, given by 
\begin{empheq}[left= \empheqlbrace]{align}
	\mathbf{u}(x,y) &= \big[ x^2 y(1-x) (1-y), \, -(2x-3x^2) \big( \frac{y^2}{2} - \frac{y^3}{3} \big)\big]^T,\nonumber \\
	p(x,y) &= -100 x^2 + 100/3, \nonumber\\
	\phi(x,y) &= x^2 y(1-x) (1-y) + 600. \nonumber 
\end{empheq} 
\begin{figure}[h]
	\centering
	\subfloat[$\Omega_3$]{\includegraphics[height=6cm, width=7.5cm]{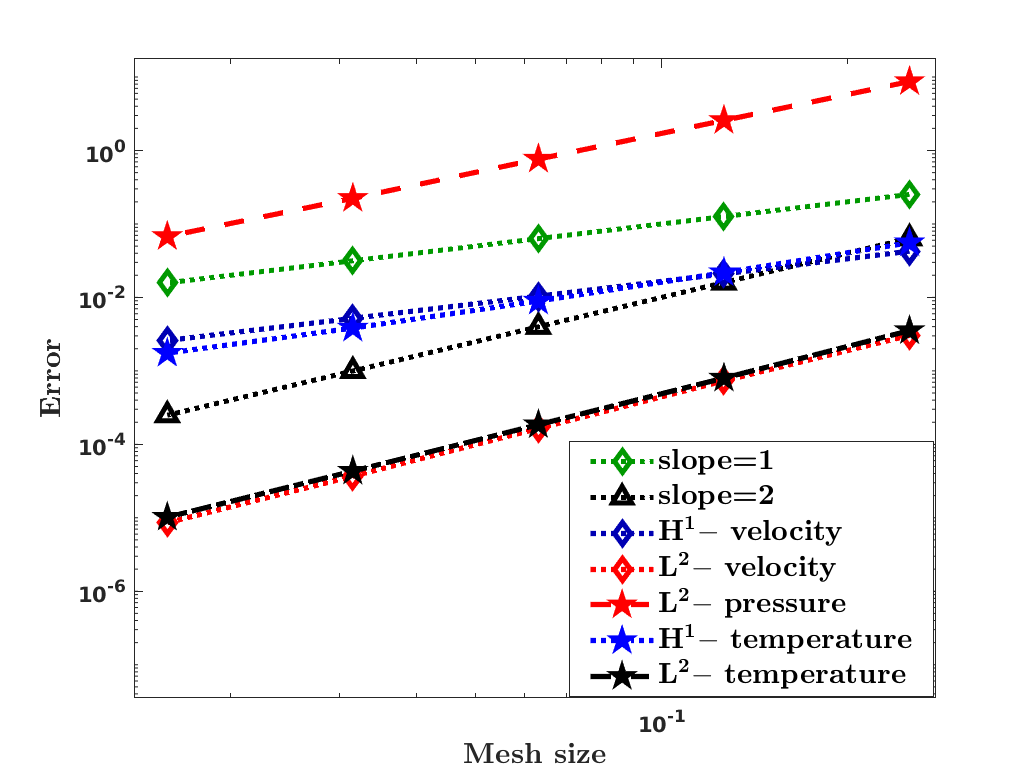}}
	\subfloat[$\Omega_3$]{\includegraphics[height=6cm, width=7.5cm]{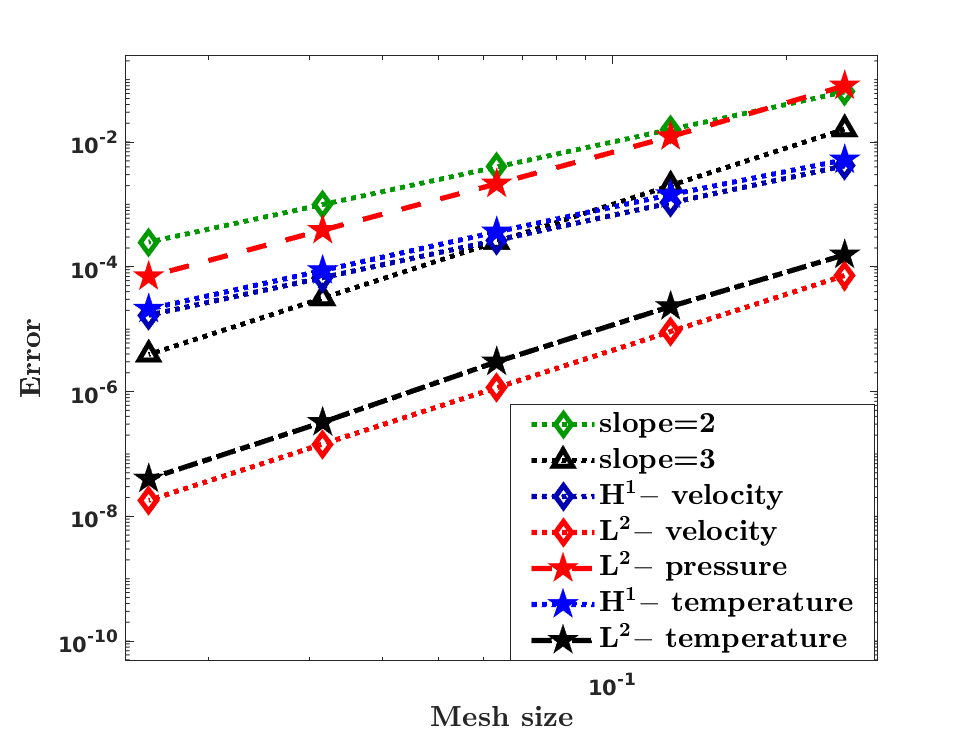}}\\
	\subfloat[$\Omega_4$]{\includegraphics[height=6cm, width=7.5cm]{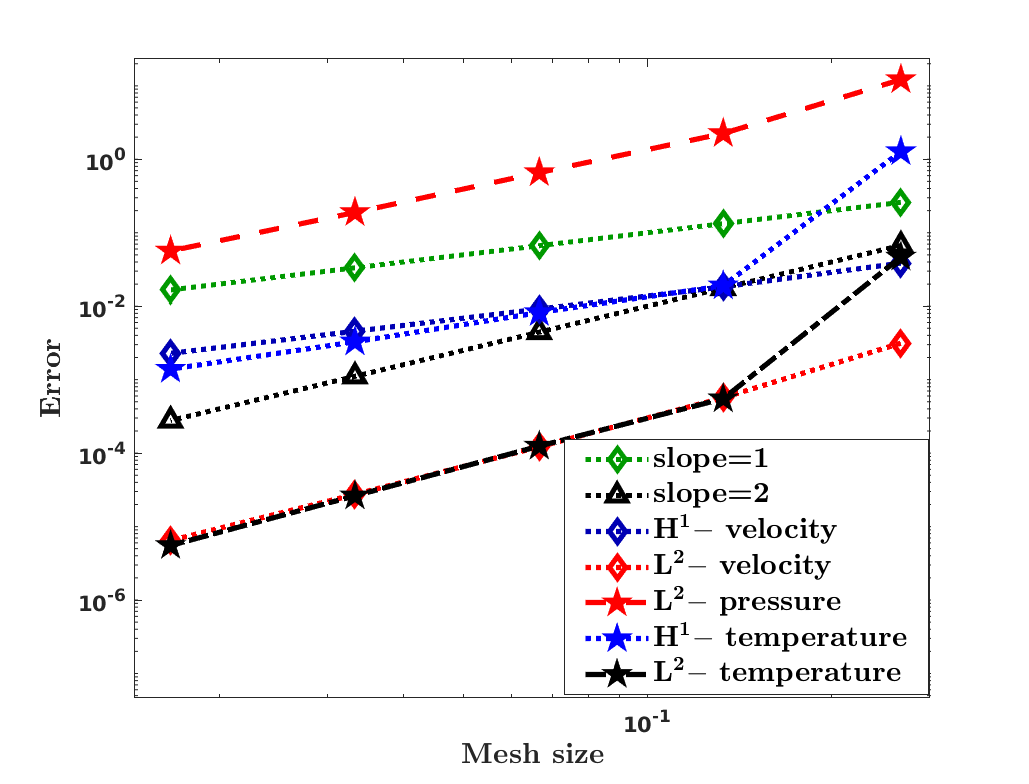}}
	\subfloat[$\Omega_4$]{\includegraphics[height=6cm, width=7.5cm]{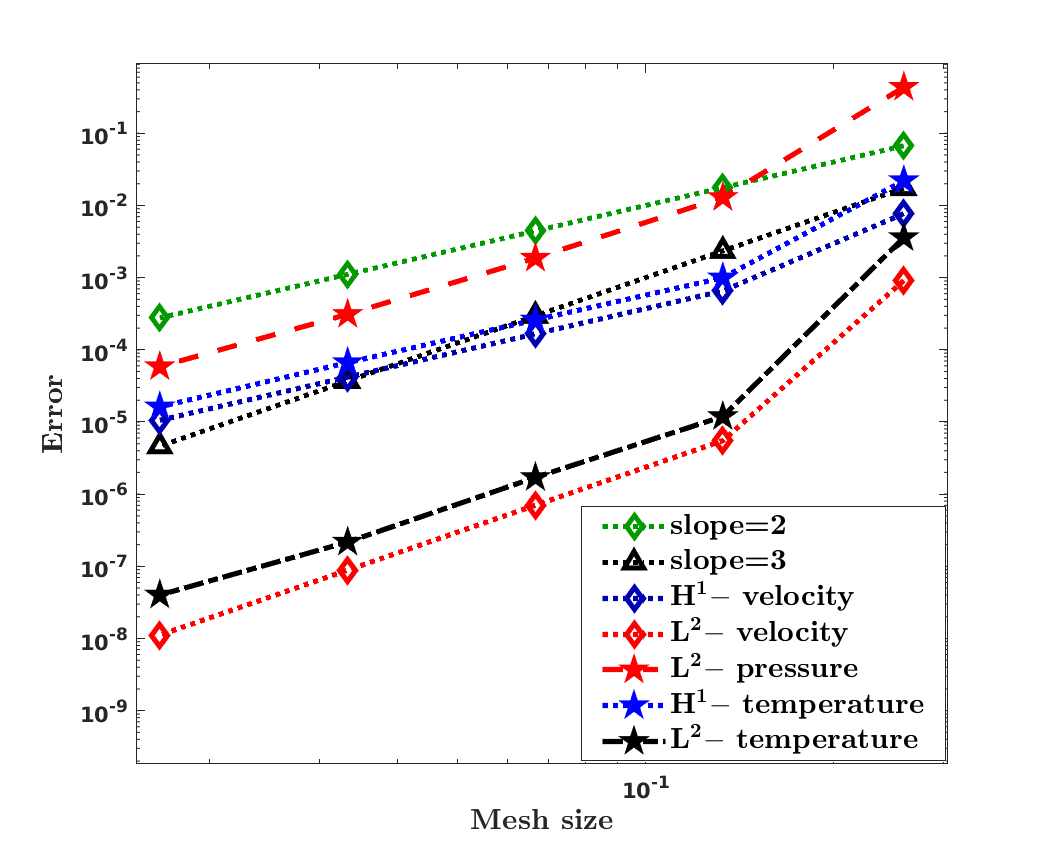}}
	\caption{ Example 2. (Case $\kappa=10^{-6}$.) Convergence curves for VEM order $k=1$ (left) and $k=2$ (right).}
	\label{ex2_cng2} 
\end{figure}

In the first case, where $\kappa=1$, we depict the convergence curves obtained using the proposed method in Figure \ref{ex2_cng1} for meshes $\Omega_3$ and $\Omega_4$ with VEM order $k=1$ and $2$. The numerical results demonstrate that the proposed method achieves optimal convergence rates for all virtual element pairs, validating the theoretical findings. Notably, the $L^2$-norm of the pressure error exhibits superconvergence {of order $\mathcal{O}(h^2)$ for both meshes with VEM order $k=1$, whereas the theoretical convergence rate for pressure is achieved of order $\mathcal{O}(h)$.}

Concerning the second case, when $\kappa=10^{-6}$, the error estimates become quasi-uniform in the energy norm as $\kappa \rightarrow 0$, and the condition \eqref{err_c} seems almost impossible to hold. Therefore, the second case represents the critical case associated with the convection-dominated regime. The convergence plots are shown in Figure \ref{ex2_cng2} for VEM order $k=1$ and $2$. Once again, the proposed method confirms the theoretical findings, particularly highlighting the superconvergence of the pressure in the $L^2$-norm for VEM order $k=1$. This underscores the robustness of the proposed stabilized method for convection-dominated regimes. 

\begin{figure}[h]
	\centering
	\subfloat[Exact velocity field.]{\includegraphics[width=6.5cm]{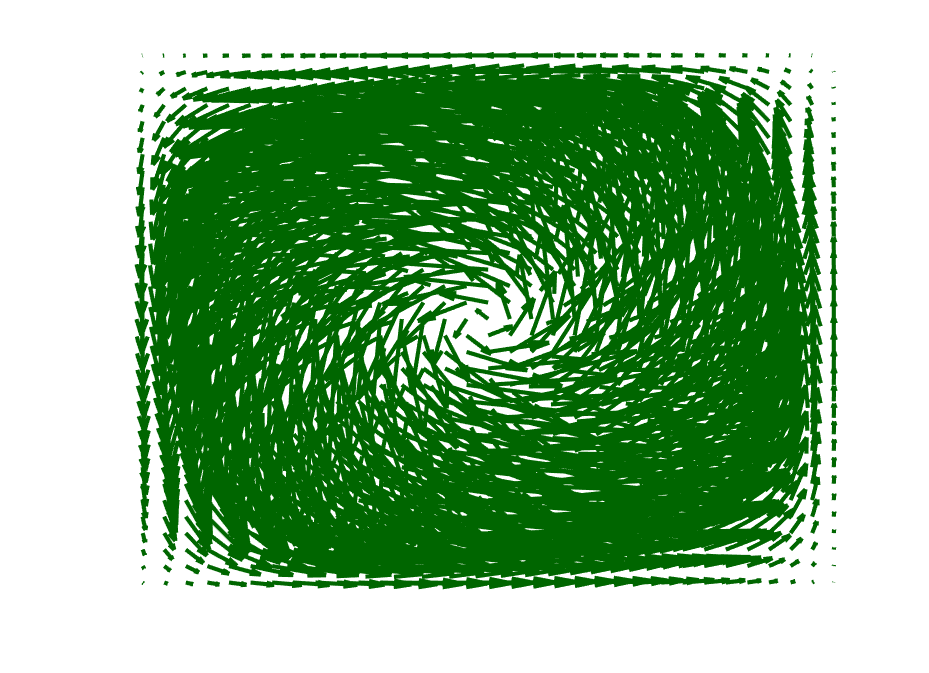}}
	\subfloat[Stabilized velocity field.]{\includegraphics[width=6.5cm]{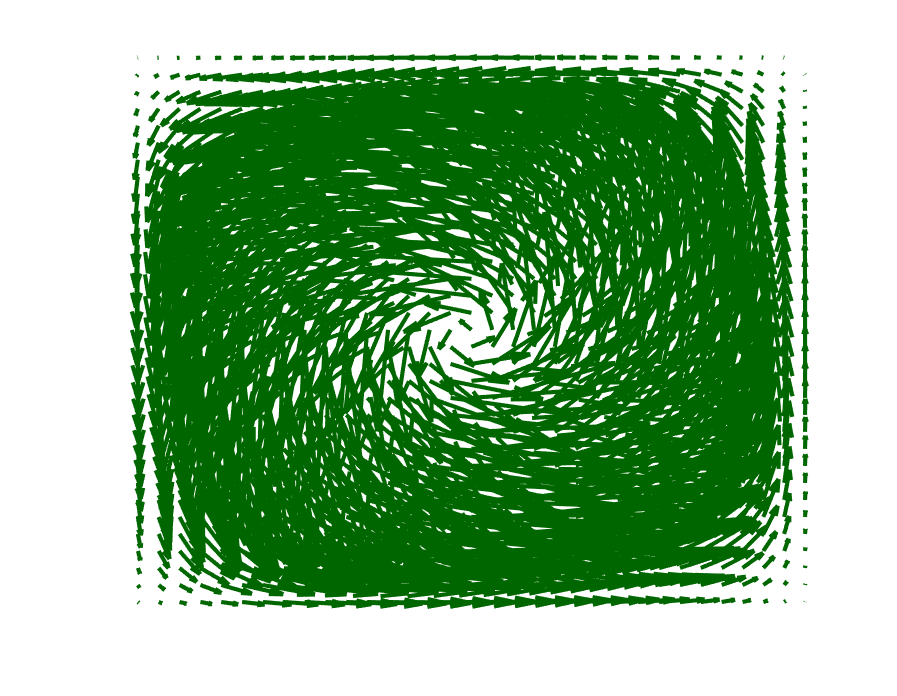}}\\
	\subfloat[Exact pressure.]{\includegraphics[width=6.5cm]{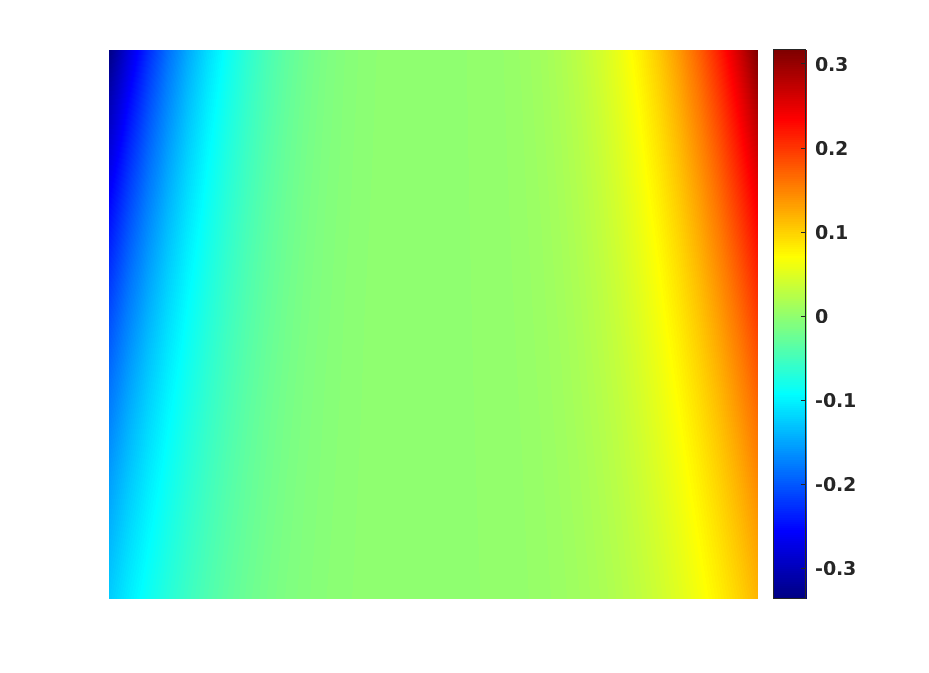}}
	\subfloat[Stabilized pressure.]{\includegraphics[width=6.5cm]{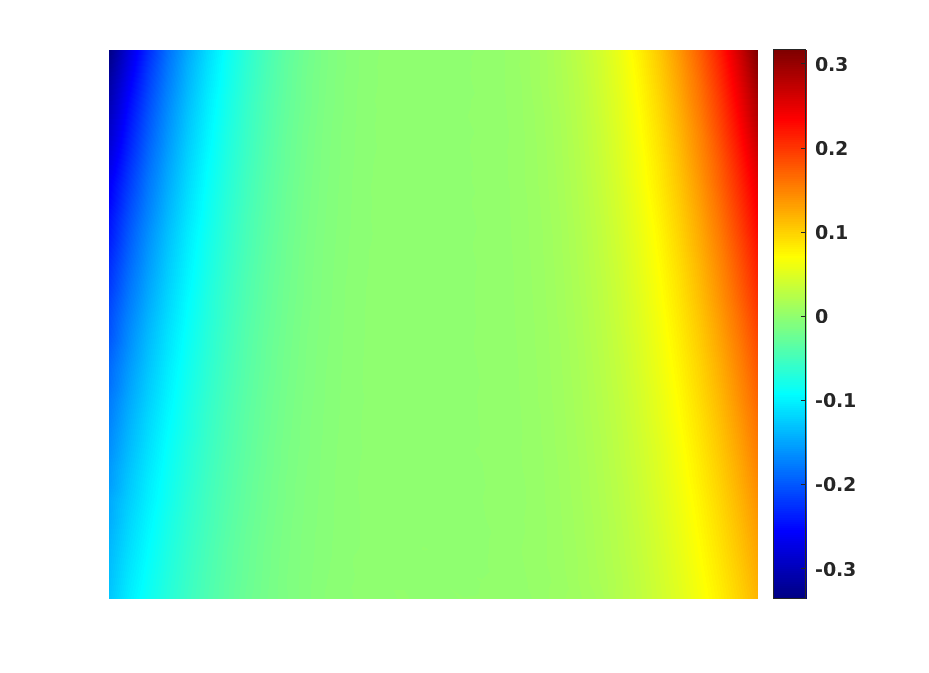}}\\
	\subfloat[Exact temperature.]{\includegraphics[width=6.5cm]{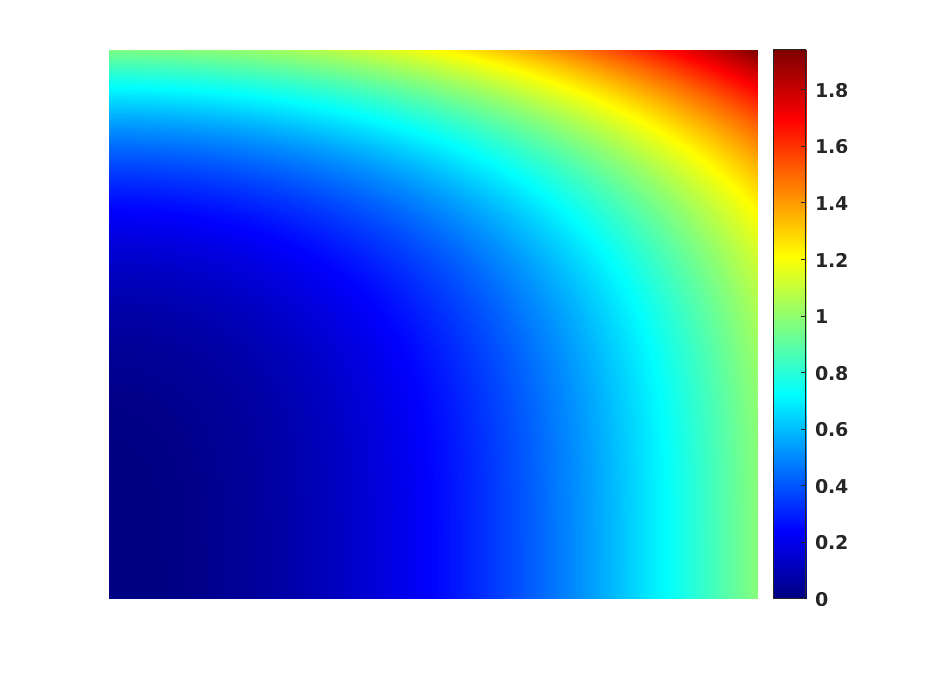}}
	\subfloat[Stabilized temperature.]{\includegraphics[width=6.5cm]{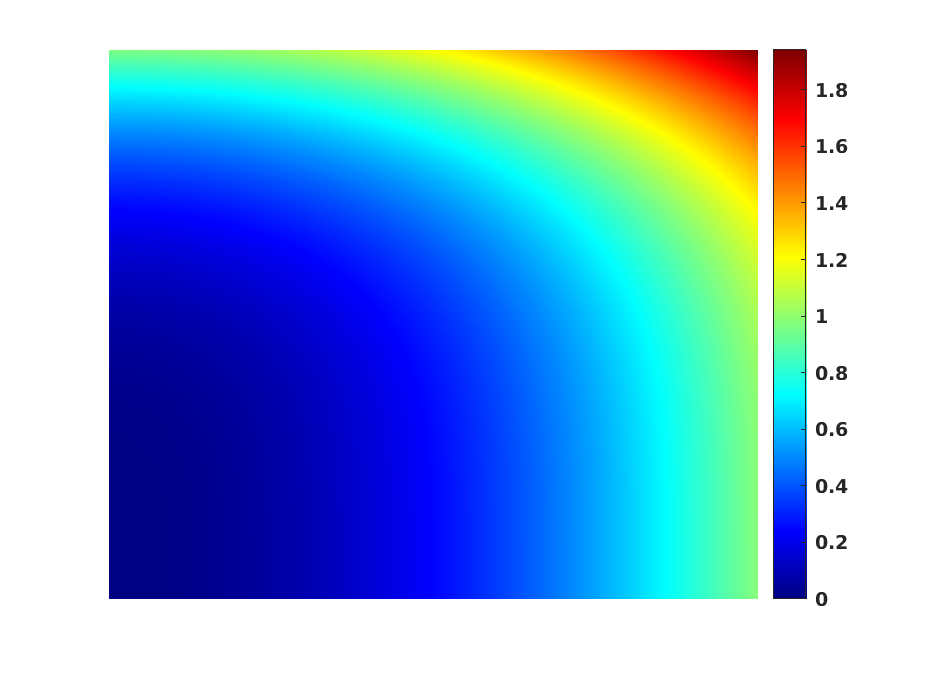}}
	\caption{ Example 3. Exact and stabilized solutions obtained using proposed VEM with mesh size $h=1/40$ for VEM order $k=1$.}
	\label{ex3_plots} 
\end{figure}

\subsection{Example 3}
\label{case3}
In this example, we test the robustness of the proposed method by considering nonlinear thermal conductivity that depends on the temperature, even though our theoretical analysis is restricted to constant thermal conductivity. To achieve this, we extend the classic benchmark problem discussed in \cite{mfem40}. For this example, we use the following parameters:
\begin{align}
	\mu(\phi)= e^{-\phi},  \qquad \alpha=1, \qquad \text{\Large $\kappa$}(\phi)= \kappa e^{\phi}, \nonumber 
\end{align}
where $\kappa$ is a positive constant. Additionally, the loads and boundary conditions are specified by the exact solution of problem $(S)$, which is given as follows:
\begin{empheq}[left= \empheqlbrace]{align}
	\mathbf{u}(x,y) &= \big[ 2 x^2 y(2y-1) (y-1)(x-1)^2, \, -2xy^2(y-1)^2(2x-1)(x-1)\big]^T,\nonumber \\
	p(x,y) &= e^{y}(x-0.5)^3, \nonumber\\
	\phi(x,y) &= x^2+y^4. \nonumber 
\end{empheq} 

In the first part, we discuss the convergence study with $\kappa = 10^{-3}$, using distorted squares $\Omega_2$ for VEM order $k=1$ and 2. In Figure \ref{ex3_plots}, we present the exact solution and stabilized VEM solution for VEM order $k=1$ with the mesh size $h=1/40$. Notably, we observe a remarkable agreement between the solutions. Furthermore, the VEM errors and convergence rates obtained using the proposed method are reported in Tables \ref{ex3_tb1} and \ref{ex3_tb2}. We emphasize that the proposed method achieves optimal convergence rates for the nonlinear thermal conductivity, as expected theoretically.

\begin{table}[t!]
	\setlength{\tabcolsep}{6pt}
	\centering 
	\caption{Example 3. The computational errors and rates using proposed method for VEM order $k=1$.}
	\label{ex3_tb1}
	\begin{tabular}{ccccccccccc}
		\toprule
		{$h$}&  {$E^\mathbf{u}_{H^1}$} & {rate} & {$E^{\mathbf{u}}_{L^2}$} & {rate} & {$E^p_{L^2}$} & {rate} & {$E^{\phi}_{H^1}$} & {rate} & {$E^{\phi}_{L^2}$} & {rate}  \\
		\midrule
		$1/5$  & 1.6332e-02 & --   & 4.1469e-04 & --   & 9.1920e-03 &--    & 1.8772e-01 & --   & 9.8843e-03 & -- \\ 
		$1/10$ & 8.2289e-03 & 0.99 & 1.1116e-04 & 1.90 & 2.8878e-03 & 1.67 & 9.4396e-02 & 0.99 & 2.5340e-03 & 1.96 \\ 
		$1/20$ & 4.1103e-03 & 1.00 & 2.9047e-05 & 1.94 & 1.1760e-03 & 1.30 & 4.7031e-02 & 1.01 & 6.4107e-04 & 1.99\\ 
		$1/40$ & 2.0603e-03 & 1.00 & 7.9007e-06 & 1.88 & 4.3814e-04 & 1.42 & 2.3623e-02 & 0.99 & 1.7421e-04 & 1.88\\ 
		$1/80$ & 1.0335e-03 & 1.00 & 2.3854e-06 & 1.73 & 1.9263e-04 & 1.19 & 1.1806e-02 & 1.00 & 4.9546e-05 & 1.81\\ 
		\bottomrule		
	\end{tabular}
	\smallskip
\end{table}

\begin{table}[t!]
	\setlength{\tabcolsep}{6pt}
	\centering 
	\caption{Example 3. The computational errors and rates using proposed scheme for VEM order $k=2$.}
	\label{ex3_tb2}
	\begin{tabular}{ccccccccccc}
		\toprule
		{$h$}&  {$E^\mathbf{u}_{H^1}$} & {rate} & {$E^{\mathbf{u}}_{L^2}$} & {rate} & {$E^p_{L^2}$} & {rate} & {$E^{\phi}_{H^1}$} & {rate} & {$E^{\phi}_{L^2}$} & {rate}  \\
		\midrule
		$1/5$  & 2.2254e-03 & -- & 3.2859e-05 & --   & 6.3929e-04 &--    & 6.9877e-03 & --   & 1.2961e-04 & -- \\ 
		$1/10$ & 5.6891e-04 & 1.97 & 4.2338e-06 & 2.97 & 1.7051e-04 & 1.91 & 1.7857e-03 & 1.97 & 1.6742e-05 & 2.95 \\ 
		$1/20$ & 1.4246e-04 & 2.00 & 5.2943e-07 & 3.00 & 4.5170e-05 & 1.92 & 4.4949e-04 & 1.99 & 2.1051e-06 & 2.99\\ 
		$1/40$ & 3.5844e-05 & 1.99 & 6.7493e-08 & 2.97 & 1.4136e-05 & 1.68 & 1.1366e-04 & 1.98 & 2.6923e-07 & 2.97\\ 
		$1/80$ & 8.9920e-06 & 2.00 & 8.4955e-09 & 2.99 & 3.7903e-06 & 1.90 & 2.8403e-05 & 2.00 & 3.3533e-08 & 3.01\\ 
		\bottomrule		
	\end{tabular}
	\smallskip
\end{table}
In the second part, we investigate the robustness of the proposed method with respect to nonlinear thermal conductivity in the presence of convection-dominated regimes for transport. To achieve this, we vary the nonlinear thermal conductivity parameter $\kappa$ from $1$ to $10^{-9}$, employing the distorted mesh $\Omega_2$ with the mesh size $h=1/20$. In Figure \ref{ex3_effect}, we depict the VEM errors for the virtual element pairs $(\mathbf{u}_h, p_h,\phi_h)$, defined in Section \ref{sec-error}. We observe that as $\kappa$ decreases from $1$ to $10^{-4}$, the VEM errors gradually increase. However, when 
$\kappa$ is further reduced to $10^{-9}$, the VEM errors remain approximately the same for all element pairs for each value of $\kappa\geq 10^{-5}$. Indeed, the curve lines become flat for all VEM errors. Thus, the practical behavior of the proposed method is robust with respect to nonlinear thermal conductivity.
\begin{figure}[h]
	\centering
	\subfloat[Order $1$.]{\includegraphics[height=5cm, width=6.5cm]{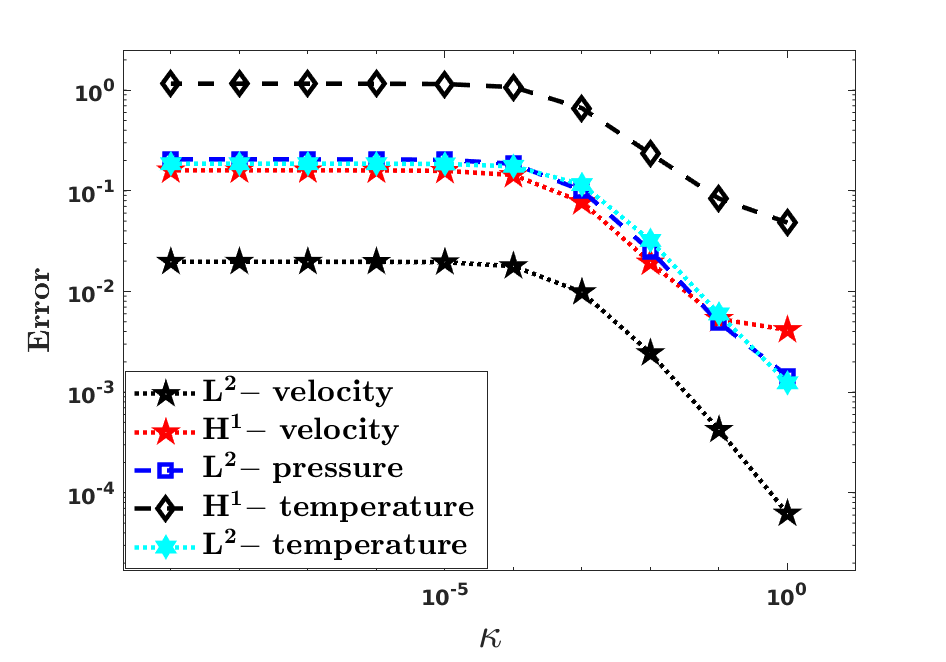}}
	\subfloat[Order $2$.]{\includegraphics[height=5cm,width=6.5cm]{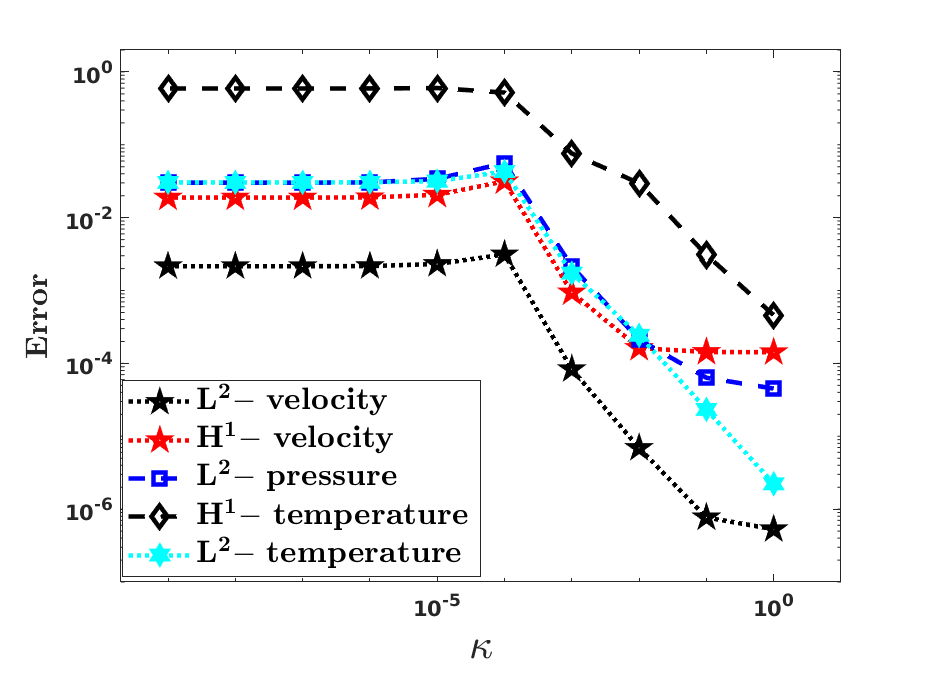}}
	\caption{Example 3. Variation of VEM error norms with varying $\kappa$ from $1$ to $10^{-9}$ with mesh size $h=1/20$ for $\Omega_2$.}
	\label{ex3_effect} 
\end{figure}

\subsection{Example 4}
\label{case4}
In this last example, we focus on investigating the efficiency and advantages of the proposed method for a physical benchmark problem discussed in \cite{mfem41} (see Example 6.6). In this case, we consider the following coupled problem on domain $\Omega=(0,4) \times (0,2) \backslash [2,4] \times [0,1]$, given as follows:
\begin{empheq}[left= \empheqlbrace]{align}
	- \mu \Delta \mathbf{u} + \nabla p &=\mathbf{0} \quad  \text{in} \,\, \Omega,\nonumber \\
	\nabla \cdot \mathbf{u} &= 0 \quad  \text{in} \,\, \Omega, \label{ex4}\\
	-\kappa \Delta \phi + \mathbf{u} \cdot \nabla \phi &= 0  \quad \text{in} \,\, \Omega. \nonumber 
\end{empheq}
The computational domain and the boundary conditions for the velocity and temperature are given in Figure \ref{ex4_a} and Eqn. \eqref{bnd},
\begin{empheq}[left= \empheqlbrace]{align}
	\mathbf{u}&= [0.5y(2-y), 0]^T \quad  \quad \,\,\, \,\,\text{on} \,\, \boldsymbol{\Gamma}_{\textbf{in}},\nonumber \\
	\mathbf{u}&= [ 4(y-1)(2-y), 0]^T \quad \,\, \text{on} \,\, \boldsymbol{\Gamma}_{\textbf{out}},\nonumber \\
	\mathbf{u} &= 0  \qquad  \qquad \qquad \qquad \quad \, \text{on} \,\, \partial \Omega \backslash (\boldsymbol{\Gamma}_{\textbf{in}}\cap \boldsymbol{\Gamma}_{\textbf{out}}), \label{bnd} \\
	\phi &= 1 \qquad \qquad \qquad \qquad \quad \, \text{on} \,\, \boldsymbol{\Gamma}_{\textbf{in}},\nonumber \\
	\kappa \nabla \phi \cdot \mathbf{n}&=0 \qquad \qquad \qquad \qquad \quad \, \text{on} \,\,\partial \Omega \backslash \boldsymbol{\Gamma}_{\textbf{in}},\nonumber 
\end{empheq}
where $\mathbf{n}$ is the outward unit normal. The exact temperature solution is $\phi_{exact}=1$.

\begin{figure}[h]
	\centering
	\begin{tikzpicture}
		\draw[thick] (0,0) -- (2,0) -- (2,1) -- (4,1) -- (4,2) -- (0,2) -- cycle;
		
		\draw[thick, dash pattern=on 1pt off 1pt] plot[domain=0:2, smooth, variable=\y] ({0.5*\y*(2-\y)}, \y); 
		\foreach \y in {0.25,0.5,...,1.8} {
			\draw[thick, ->, >=stealth] ({0}, \y) -- ({0.5*\y*(2-\y)}, \y); 
		}
		
		\draw[thick,dash pattern=on 1pt off 1pt] plot[ domain=1:2, smooth,  variable=\y] ({4+4*(\y-1)*(2-\y)}, \y); 
		\foreach \y in {1.15,1.30,...,2.0} {
			\draw[thick, ->, >=stealth] ({4}, \y) -- ({4+4*(\y-1)*(2-\y)}, \y); 
		}
		\node at (3.25, 1.5) [right] {$\boldsymbol{\Gamma}_{\textbf{out}}$};
		\node at (0, 1) [left] {$\boldsymbol{\Gamma}_{\textbf{in}}$};
		\node[anchor=west] at (6, 1) {\includegraphics[height=3cm, width=6cm]{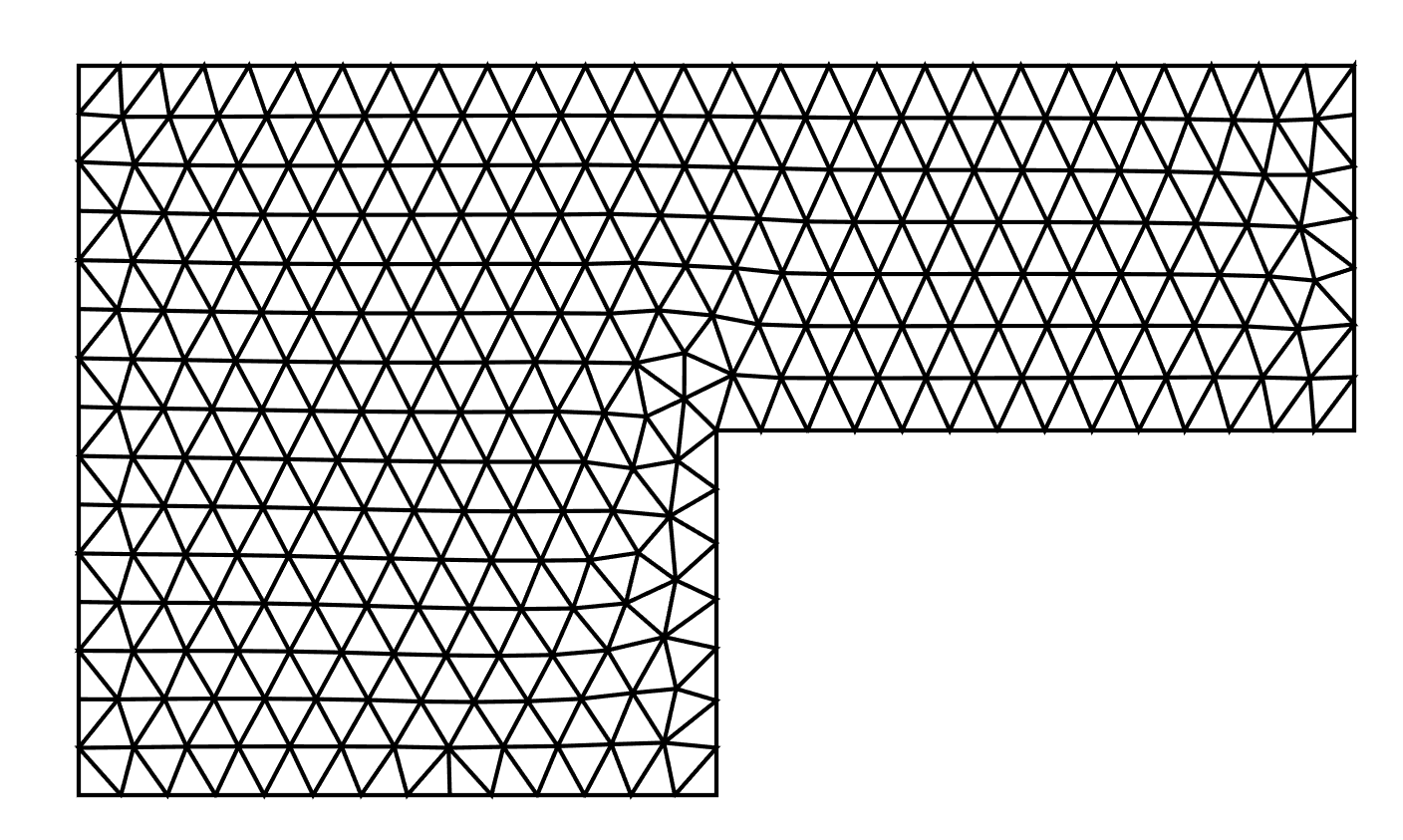}};
	\end{tikzpicture}
	\caption{Example 4. Left: domain with boundary conditions. Right: computational domain with $h=1/4$. }
	\label{ex4_a}
\end{figure}

\begin{figure}[h]
	\centering
	{\includegraphics[height=3cm, width=6cm]{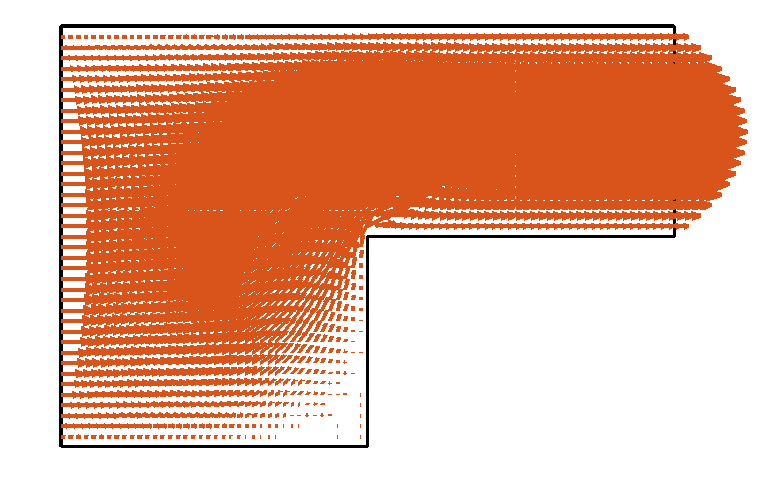}}
	\caption{ Example 4. Velocity vector field with $h=1/64$ for VEM order $k=1$.}
	\label{ex4_vecfield} 
\end{figure}

\begin{figure}[h]
	\centering
	\subfloat[]{\includegraphics[height=3.5cm, width=7cm]{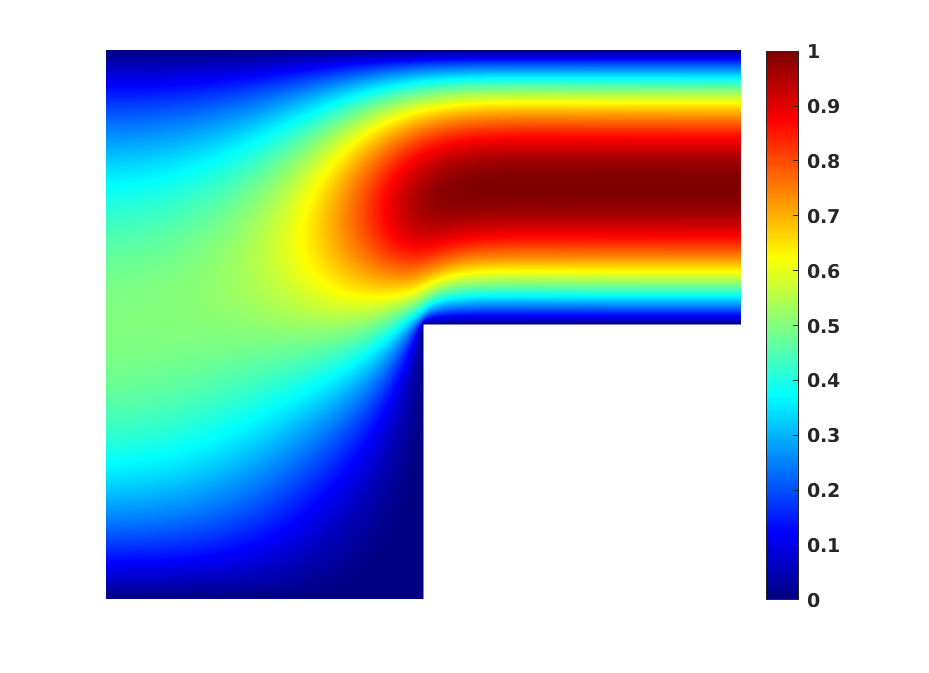}}
	\subfloat[]{\includegraphics[height=4.5cm, width=9cm]{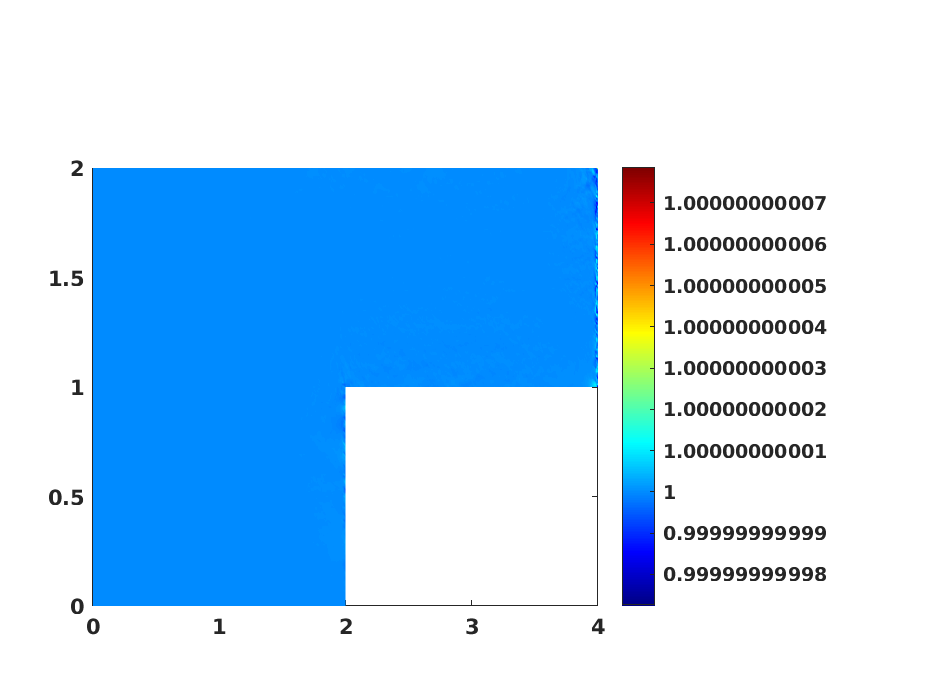}}
	\caption{ Example 4. (Case $\mu=10^{-2}, \kappa= 10^{-6}$.) (a) Stabilized velocity norm  and (b) stabilized temperature with mesh size $h=1/64$ for VEM order $k=1$.}
	\label{ex4_plotsk1} 
\end{figure}

\begin{table}[t!]
	\setlength{\tabcolsep}{15pt}
	\centering 
	\caption{Example 4. Minimal and maximal of $(\phi_h-1)$ for VEM order $k=1$ with $\mu=10^{-2}$ and $\kappa= 10^{-6}$.}
	\label{ex4_tb1}
	\smallskip
	\begin{tabular}{cccccc}
		\toprule
		{$1/h$}&  {4} & 8 & 16 & 32 & 64   \\
		\midrule
		$\min(\phi_h-1)$ & -1.324496e-13 & -4.597434e-13  & -8.915091e-13 & -1.095080e-11 & -3.134626e-11 \\ 
		$\max(\phi_h-1)$ & 2.453593e-13  &  2.999823e-13  & 8.179679e-12  & 1.049960e-11  & 7.848322e-11 \\ 
		\bottomrule		
	\end{tabular}
	\smallskip
\end{table}

\begin{figure}[h]
	\centering
	\subfloat[]{\includegraphics[ height=3.5cm, width=7cm]{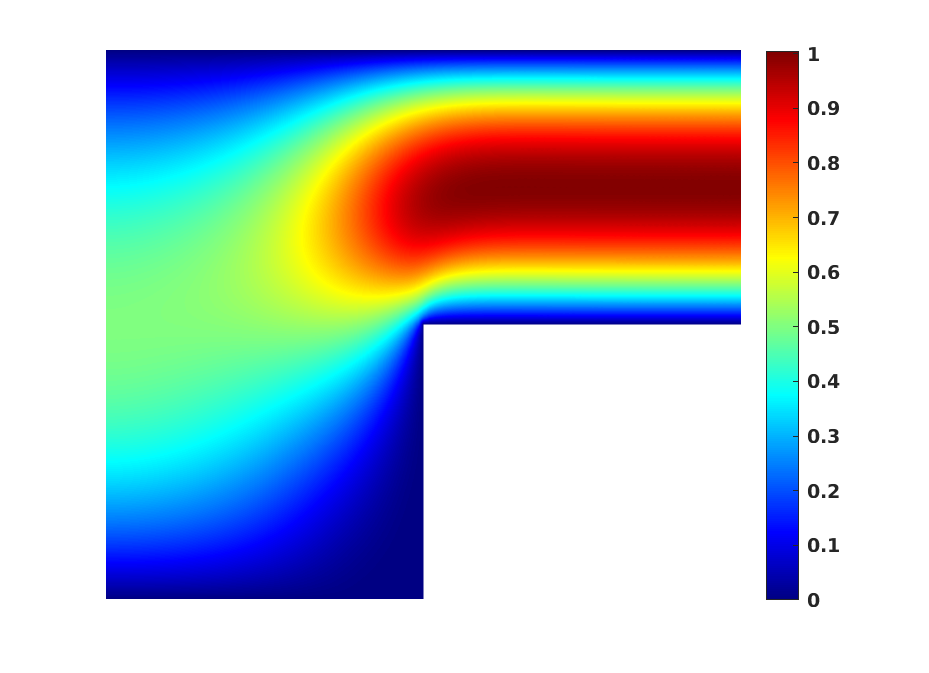}}
	\subfloat[]{\includegraphics[height=4.5cm, width=9cm]{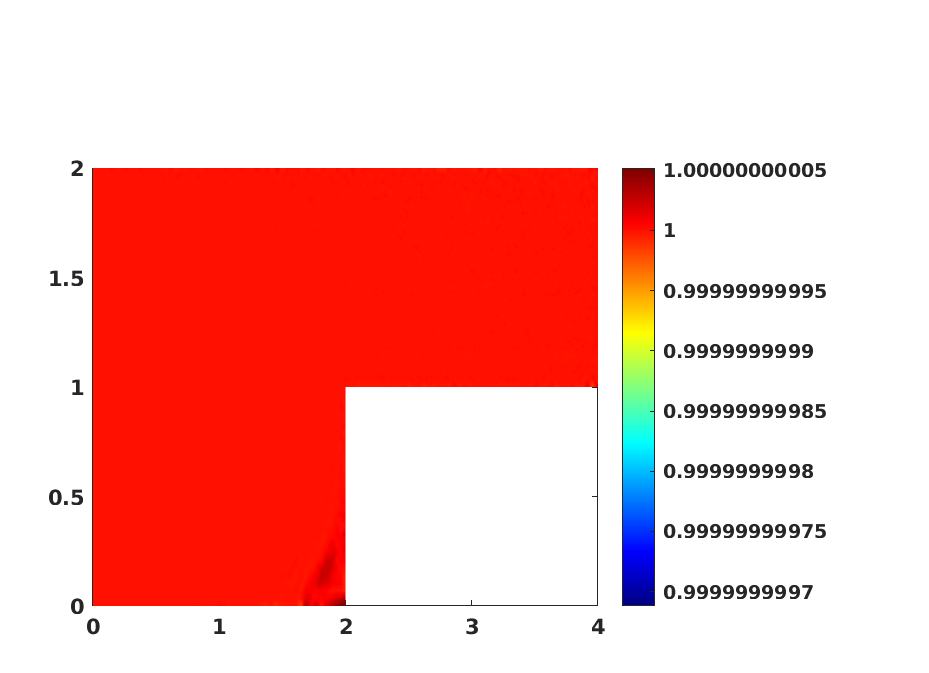}}
	\caption{ Example 4. (Case: $\mu=10^{-2}, \kappa= 10^{-6}$.) (a) Stabilized velocity norm  and (b) stabilized temperature with mesh size $h=1/16$ for VEM order $k=2$.}
	\label{ex4_plotsk2a} 
\end{figure}

\begin{table}[t!]
	\setlength{\tabcolsep}{22pt}
\centering
	\caption{Example 4. Minimal and maximal of $(\phi_h-1)$ for VEM order $k=2$ with $\mu=10^{-2}$ and $\kappa= 10^{-6}$. }
	\label{ex4_tb2}
	\begin{tabular}{ccccc}
		\toprule
		{$1/h$}&  {4} & 8 & 16 & 32    \\
		\midrule
		$\min(\phi_h-1)$ & -3.120919e-10 & -3.157863e-11 & -1.635458e-11 & -7.758871e-11  \\ 
		$\max(\phi_h-1)$ & 1.797118e-11  & 6.335976e-11  & 5.250955e-11  & 8.660028e-11   \\ 
		\bottomrule		
	\end{tabular}
	\smallskip
\end{table}

\begin{figure}[h]
	\centering
	\subfloat[]{\includegraphics[height=3.5cm, width=7cm]{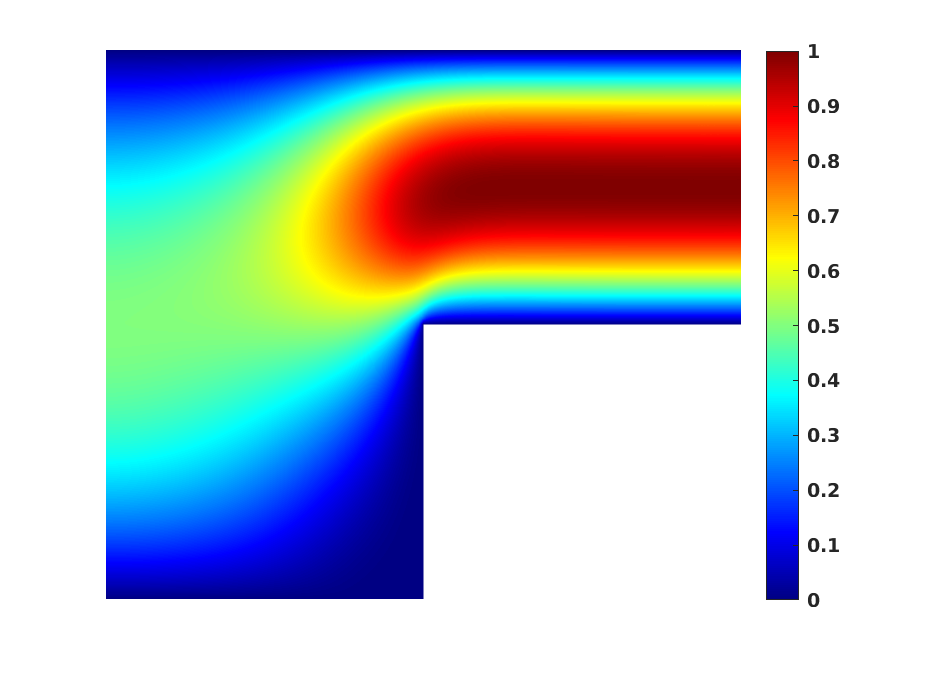}} 
	\subfloat[]{\includegraphics[height=4.35cm, width=7.cm]{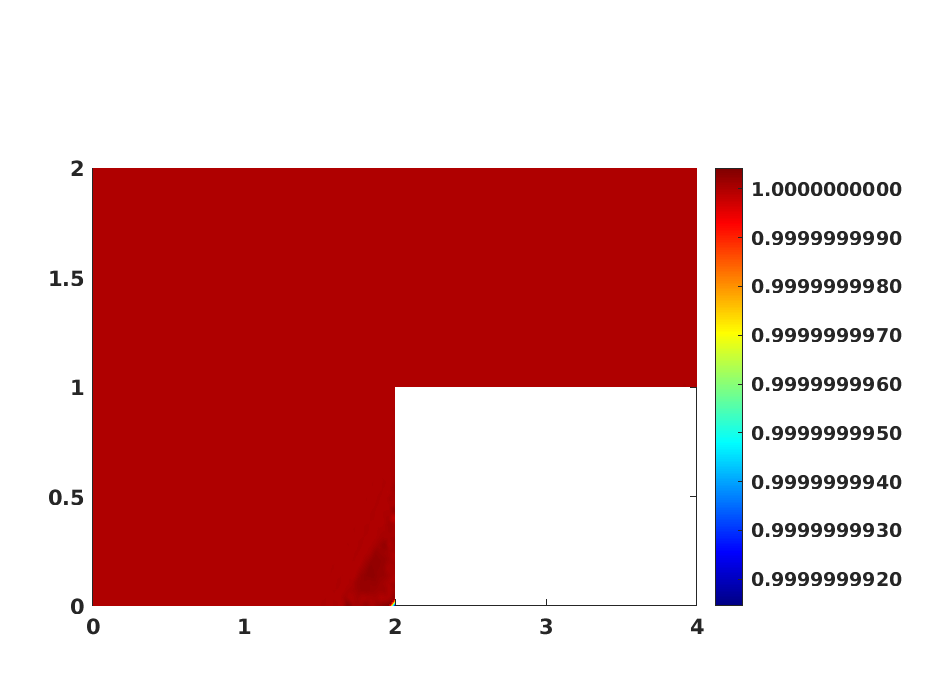}} 
	\caption{ Example 4. (Case: $\mu=10^{-4}, \kappa= 10^{-9}$.) (a) Stabilized velocity norm  and (b) stabilized temperature with mesh size $h=1/16$ for VEM order $k=2$.}
	\label{ex4_plotsk2c} 
\end{figure}

\begin{table}[t!]
	\setlength{\tabcolsep}{22pt}
	\centering
	\caption{Example 4. Minimal and maximal of $(\phi_h-1)$ for VEM order $k=2$ with $\mu=10^{-4}$ and $\kappa= 10^{-9}$. }
	\label{ex4_tb3}
	\begin{tabular}{ccccc}
		\toprule
		{$1/h$}&  {4} & 8 & 16 & 32    \\
		\midrule
		$\min(\phi_h-1)$ & -4.299228e-12  & -9.461535e-08 & -8.547160e-09 & -1.226361e-09  \\ 
		$\max(\phi_h-1)$ & 3.686536e-09  & 7.751533e-11  & 4.422558e-10 & 9.066590e-10  \\ 
		\bottomrule		
	\end{tabular}
	\smallskip
\end{table}

 Following \cite{mfem41,mvem3}, we decompose the domain $\Omega$ using a sequence of triangular meshes with diameter $h=1/4$, $1/8$, $1/16$, $1/32$, and $1/64$. We emphasize that the triangulation of $\Omega$ is generated using the 2D mesh generation methodology described in  \cite{mfem42}.

Our first case focuses on the same parameters used in \cite{mfem41}, where the viscosity $\mu=10^{-2}$ and thermal conductivity $\kappa=10^{-6}$. Notably, the transport equation for temperature in \eqref{ex4} is convection-dominated. The numerical results obtained using the proposed method are presented in Figures \ref{ex4_vecfield} and  \ref{ex4_plotsk1} and Table \ref{ex4_tb1} for VEM order $k=1$. We observe that the stabilized VEM temperature is almost free from non-physical oscillations, except for some very mild oscillations of maximum order $10^{-10}$ near the right walls at $x=2$ and $x=4$. Moreover, these oscillations can be easily removed by increasing the density of elements near the right walls, though this is beyond the scope of the present work.  However, it is worth noting that the discrete velocity space is not divergence-free. Nonetheless, the stabilization term $\mathcal{L}_{1,h}(\cdot,\cdot)$ effectively reduces the violation of the divergence constraints, whereas, in classical mixed finite elements, such violations lead to strong non-physical oscillations in the discrete temperature solution.  The numerical results for VEM order $k=2$ are shown in Figure \ref{ex4_plotsk2a} and Table \ref{ex4_tb2}, which are consistent with the results of \cite{mvem3}.

Additionally, we investigate the performance of the proposed method in the strongly convection-dominated regimes for the transport temperature equation. To do this, we fix $\mu=10^{-4}$ and $\kappa=10^{-9}$. The numerical results obtained using the proposed method are depicted in Figure \ref{ex4_plotsk2c} and Table \ref{ex4_tb3}. Obviously,  the proposed method effectively addresses strongly convection-dominated regimes.

\section{Conclusion} \label{sec:7}
We present a stabilized virtual element method to address the coupled Stokes-Temperature equation on general polygons, employing equal-order virtual element pairs. The stabilization techniques belong to a family of symmetric stabilization methods based on local projections. The proposed method introduces separate stabilization terms, avoiding the inclusion of higher-order derivative terms or bilinear forms involving velocity, pressure and temperature. We derive the stability of the continuous problem using the Banach contraction theorem. Furthermore, the existence and uniqueness of the discrete solution are shown using the Brouwer fixed-point theorem and the contraction theorem. Error estimates are derived in the energy norms, showing optimal convergence rates. Additionally, several numerical examples are presented, confirming the theoretical convergence rates, including both diffusion-dominated and convection-dominated regimes for the transport equation. Notably, the numerical behavior of the proposed method is robust with respect to both linear and nonlinear (temperature-dependent) thermal conductivity.

\subsection*{Data Availability}
Data sharing is not applicable to this article as no datasets were generated or analyzed.

\section*{Declarations}
\subsection*{Conflict of interest} 
The authors declare no conflict of interest during the current study.

		\clearpage
		
		\bibliographystyle{plain}
		\bibliography{references}
		
	\end{document}